\begin{document}

\title{Ehresmann-Schein-Nambooripad theorems for classes of biunary semigroups}
\author{Tim Stokes}

\date{}
\maketitle

\newcommand{\bea}{\begin{eqnarray*}}
\newcommand{\eea}{\end{eqnarray*}}

\newcommand{\ben}{\begin{enumerate}}
\newcommand{\een}{\end{enumerate}}

\newcommand{\bi}{\begin{itemize}}
\newcommand{\ei}{\end{itemize}}

\newenvironment{proof}{\noindent \textbf{Proof.}\hspace{.7em}}
                   {\hfill $\Box$
                    \vspace{10pt}}

\newcommand{\mc}{\mathcal}

\newtheorem{thm}{Theorem}[section]
\newtheorem{theorem}[thm]{Theorem}
\newtheorem{lem}[thm]{Lemma}
\newtheorem{pro}[thm]{Proposition}
\newtheorem{proposition}[thm]{Proposition}
\newtheorem{cor}[thm]{Corollary}
\newtheorem{corollary}[thm]{Corollary}
\newtheorem{eg}[thm]{Example}
\newtheorem{dfn}[thm]{Definition}
\newtheorem{prodef}[thm]{Proposition/Definition}

\newcommand{\dom}{\mbox{dom}}
\newcommand{\ran}{\mbox{ran}}

\begin{abstract}
We obtain an ESN theorem for a very general class of biunary semigroups with idempotent-valued domain and range operations, representing them in terms of small categories equipped with a suitable biaction of the identities on the category.  Our results generalise the recent work of Fitzgerald and Kinyon connecting localisable semigroups to transcription categories, as well as that of Lawson linking Ehresmann semigroups to categories with Ehresmann biaction.  In contrast to most approaches to ESN theorems, we do not require the categories to be ordered or for their sets of identities to possess any particular structure.  Throughout, the biunary semigroups are represented using categories rather than generalised categories of any kind, and we obtain category isomorphisms between the clesses of semigroups and their associated enriched categories, rather than category equivalences.  Our results cover the class of DRC-semigroups considered by Jones and Shoufeng Wang, but they also cover cases where not both congruence conditions hold, including examples such as the semigroup of binary relations on a set under demonic composition equipped with domain and range operations.
\end{abstract}

\noindent{\bf Keywords:} ESN theorem, category with biaction, biunary semigroup.
\medskip

\noindent{\bf 2010 Mathematics Subject Classification:} 20M50, 20M30.

\section{Introduction}  \label{sec:intro}

Throughout, if $X$ is a non-empty set then $Rel(X)$ denotes the semigroup of binary relations on $X\times X$ under familiar relational composition, $PT(X)$ denotes the semigroup of partial functions $X\rightarrow X$, and $I(X)$ the inverse semigroup of one-to-one partial functions on $X$.  Function application is always written on the right (with the exception of unary operation application), and composition is to be read left-to-right.

For current purposes, a small category $C$ is a set equipped with a partial binary operation $\circ$ and two unary operations denoted by $D$ and $R$, satisfying, for all $x,y\in C$,
\ben[ (C1)]
\item $D(x)\circ x=x$, $x\circ R(x)=x$
\item $R(D(x))=D(x)$, $D(R(x))=R(x)$
\item $x\circ y$ exists if and only if $R(x)=D(y)$
\item if $R(x)=D(y)$ then $D(x\circ y)=D(x)$ and $R(x\circ y)=R(y)$
\item $x\circ (y\circ z)=(x\circ y)\circ z$ whenever the two products are defined.
\een
This ``object-free" formulation (or some equivalent of it) is frequently used in algebra; it is the definition used in \cite{fitzkin}, and is equivalent to that used in \cite{law1}.  If $C$ is a category in the above sense, denote by $C^0$ the identities of $C$ -- elements of the form $D(s)$ (or equivalently $R(s)$ by the second law), where $s\in C$.

In terms of biunary semigroups, the starting point for the current work is the class of {\em localisable semigroups}, defined in \cite{fitzkin} by the following laws.
\bi
\item $D(x)x=x$
\item $D(xy)=D(xD(y))$
\item $D(x)D(y)=D(D(x)y)$
\item $xR(x)=x$
\item $R(xy)=R(R(x)y)$
\item $R(x)R(y)=R(xR(y))$
\item $R(D(x))=D(x), D(R(x))=R(x)$
\ei
It follows easily that for a localisable semigroup $S$, $D(D(x))=D(x)$ and $R(R(x))=R(x)$ for all $x\in S$, and that $D(S)=\{D(s)\mid s\in S\}$ is a band.  As noted in Corollary 5.3 of \cite{fitzkin}, Ehresmann semigroups are nothing but localisable semigroups in which the projections commute: they satisfy the further law $D(x)D(y)=D(y)D(x)$, and so $D(S)$ is a semilattice (hence a poset if we define $e\leq f$ if and only if $e=ef$, in the usual way).  It is well-known that the semigroup $Rel(X)$ of binary relations on the non-empty set $X$, equipped with domain and range operations given by $D(\rho)=\{(x,x)\mid x\in \dom(\rho)\}$ and dually for $R(\rho)$ in terms of $\ran(\rho)$, is an Ehresmann semigroup.  Natural examples of non-Ehresmann localisable semigroups are harder to come by, as noted in \cite{fitzkin}.

In any localisable semigroup $S$, one may define the partial binary operation $\circ$ as follows: for all $s,t\in S$,
$$s\circ t=st\mbox{ providing $R(s)=D(t)$,}$$
and undefined otherwise.  Then the structure $C={\mc C}(S)=(S,\circ,D,R)$ is easily seen to be a category in which $C^0=D(S)$.   

Before the advent of localisable semigroups, Lawson considered this category construction, and was able to characterise those categories ${\mc C}(S)$ arising from an Ehresmann semigroup $S$ in this way.  He did so by enriching the structure of $C={\mc C}(S)$ by defining two partial orders $\leq_l,\leq_r$ on $C$ as follows: $s\leq_l t$ if and only if $s=tR(s)$, and $s\leq_r t$ if and only if $s=D(s)t$; note that on $C^0=D(S)$, these two partial orders agree with the usual semilattice order on $D(S)$.  Then on $C$, notions of ``restriction" and ``co-restriction" can be defined: for $e\in C^0$ and $s\in C$.  One defines the restriction $e|s=es$ whenever $e\leq D(s)$, and dually for the co-restriction $s|e$.  Then $e|s$ can be shown to be the unique $t\in C$ such that $t\leq_r s$ and $D(t)=e$, and dually for $s|e$.  Ehresmann then characterised Ehresmann semigroups as small categories equipped with two partial orders in which notions of restriction and co-restriction exist, satisfying various laws; for the details, consult \cite{law1}.  This is a generalisation of the original ESN theorem linking inverse semigroups to inductive$_1$ groupoids.

In other related work, sometimes a variation on the partial operation $\circ$ given above is used, which is defined ``more often" and gives rise to a generalised category structure.  This happens in the work of Jones in \cite{jones} and Wang in \cite{Swang}.  These approaches make use of generalised categories which come equipped with two partial orders and notions of restriction and co-restriction analogous to those used in \cite{law1}, and whose identities form some kind of ``projection algebra" rather than a semilattice.  In these cases, the category of biunary semigroups under consideration is shown to be isomorphic to a corresponding category of enriched generalised categories. 

There is important related work involving non-biunary semigroups.  In one of the most important contributions to the theory of regular semigroups, Nambooripad connected regular semigroups to inductive$_2$ groupoids (see \cite{Nambooripad}).  Following this, Armstrong in \cite{Armstrong} connected concordant semigroups to inductive$_2$ cancellative categories.  In these two cases, the corresponding categories of semigroups and of categories are equivalent but not isomorphic: in particular, the underlying set of the category is generally different to that of the semigroup to which it is equivalent.  In other work, Gould and Wang \cite{gw} obtained a category isomorphism between the class of weakly B-orthodox semigroups and suitable generalised categories, and then Wang in \cite{wang1} obtained an equivalence between the same class of semigroups and a class of actual categories.   

In all of the above approaches, the (possibly generalised) categories corresponding to the class of semigroups of interest are equipped with partial orders  in terms of which notions of restriction and co-restriction are defined; moreover, the identities in these (possibly generalised) categories are assumed to have some algebraic structure consistent with the order(s) on the (generalised) category (for example, that of a semilattice in the original ESN theorem and in \cite{law1}, or of projection algebra in \cite{Swang}, or indeed of regular biordered set in Nambooripad's work).  

In \cite{fitzkin}, the authors obtained an ESN theorem that related the class of localisable semigroups to a class of enriched categories.  In general, a localisable semigroup $S$ has no natural partial orders definable on it in terms of which the structure of $C={\mc C}(S)$ might be characterised.  However, we may define $e|s=es$ and $s|e=se$ for {\em all} $e\in C^0$ and $s\in C$ (rather than only when $e\leq D(s)$ or $e\leq R(s)$).  Motivated by this, the authors of \cite{fitzkin} were led to define a {\em transcription category} to be a category $C$ equipped with left and right actions of the identities of the category $C^0$ on all of $C$, here denoted $e|s,s|e$ for all $e\in C^0$ and $s\in C$, and satisfying the following.

\begin{enumerate}[ (TC1)] 
\item For $e,f\in C^0$, $e|f$ does not depend on which way the action is interpreted. 
\item For all $a\in C$, $D(a)|a = a$ and $a|R(a) = a$.
\item For all $a\in C$ and $e,f\in C^0$, $e|(f|a) = (e|f)|a$ and $a|(e|f)=(a|e)|f$.
\item For all $a,b\in C$, if $a\circ b$ exists then for all $e\in C^0$, 
\ben [ (TC4a)] 
\item so does $(e|a)\circ R(e|a)|b$, and $e|(a\circ b)=(e|a)\circ R(e|a)|b$;
\item so does $a|D(b|e)\circ b|e$, and $(a\circ b)|e=a|D(b|e)\circ b|e$.
\een
\item For all $e\in C^0$ and $a\in C$, 
\ben [ (TC5a)]
\item $D(e|a)=e|D(a)$; 
\item $R(a|e)=R(a)|e$.
\een
\item For all $e,f\in C^0$ and $a\in C$, $(e|a)|f = e|(a|f)$.
\een

The properties above were labelled (3.1a)--(3.1f) at the beginning of the third section of \cite{fitzkin}.  It follows from (TC5a) or (TC5b) that $e|f\in C^0$ for all $e,f\in C^0$; hence the two laws in (TC3) make sense.  Likewise, it follows from the laws other than (TC4) that if $a\circ b$ exists, then so do $(e|a)\circ R(e|a)|b$ and $a|D(b|e)\circ b|e$, and so (TC4a) does not strictly require the assumption that $(e|a)\circ R(e|a)|b$ exist, and dually for (TC4b); however, in the more general settings considered in what follows, the form stated above is required.  

It was shown in \cite{fitzkin} that if $S$ is a localisable semigroup, then $C={\mc C}(S)$ is a transcription category when equipped with the biaction described above ($e|s=es$ and $s|e=se$ for all $s\in C$ and $e\in C^0$).  Conversely it was shown that, given a transcription category $C$, one can turn it into a localisable semigroup by retaining $D$ and $R$ but defining a pseudoproduct via $s\otimes t=s|D(t)\circ R(s)|t$ for all $s,t\in C$ (noting that this pseudoproduct always exists).  These constructions are shown to be mutually inverse in \cite{fitzkin} and indeed one can obtain an isomorphism of categories with morphisms defined in the natural ways, as follows easily from Theorem $4.8$ in \cite{fitzkin}.
  
Lawson's definition of Ehresmann biactions on categories given in \cite{law2} uses slightly different but equivalent defining laws in place of (TC1)--(TC6), with the law $e|f=f|e$ for all $e,f\in D(S)$ added.  Lawson in \cite{law2} showed that Ehresmann semigroups correspond to categories with an Ehresmann biaction, a result which can be viewed as a special case of the main result of \cite{fitzkin} that links localisable semigroups to transcription categories.   

It is this approach that we shall generalise in what follows.  For a biunary semigroup with sufficient structure, we show how there is an associated category equipped with a biaction defined in a way formally identical to that used to define transcription categories from localisable semigroups.  We are able to characterise the resulting categories with biaction, and obtain category isomorphisms between the biunary semigroups and the categories with biaction.  We consider several special cases of the correspondence.

Throughout, we work with actual categories rather than generalisations of them, and obtain isomorphisms between the categories of biunary semigroups and of enriched categories in all cases, rather than equivalences.  Our results apply to the DRC-semigroups of \cite{jones}, obtaining an alternative ESN theorem to that given in \cite{Swang}.  But they also apply to cases in which not both congruence conditions hold, including a class containing the biunary semigroup of binary relations on a non-empty set under so-called demonic composition to which we return later.

\section{Precat-semigroups and their various types}

\subsection{Defining precat-semigroups}

We next define the most general types of semigroups for which an approach formally similar to that taken in \cite{fitzkin} and \cite{law2} can be used.

\begin{dfn}
A biunary semigroup $S$, with unary operations $D$ and $R$, is a {\em precat-semigroup} if for all $x\in S$,
\ben[ (CS1)]
\item $D(x)x=x$
\item $xR(x)=x$
\item $D(x)^2=D(x)$
\item $D(R(x))=R(x)$
\item $R(D(x))=D(x)$
\een
Elements of $D(S)=\{D(s)\mid s\in S\}$ are called {\em projections}.
\end{dfn}

In particular, localisable semigroups are precat-semigroups, but there are many more examples than these.  As for localisable semigroups, it follows easily that $D(D(x))=D(x)$ and $R(R(x))=R(x)$.  A key difference is that $D(S)$ need not be a band.

If $S$ is a precat-semigroup, note that $D(S)\subseteq E(S)$ (the set of idempotents of $S$), and for all $e\in D(S)$, $D(e)=R(e)=e$; hence $R(S)$, defined dually to $D(S)$, is equal to it.  Put simply then, a precat-semigroup is a biunary semigroup with a distinguished set of idempotents $D(S)$ consisting of the elements fixed by the unary operations $D$ and $R$, and which are such that for each $s\in S$, $D(s)$ is a left identity for $s$ and $R(s)$ is a right identity for $s$.

We next list several other important possible properties of precat-semigroups that will feature in the work to follow.

\begin{dfn}
Let $S$ be a precat-semigroup.  Then we say that  
\bi 
\item $S$ is a {\em cat-semigroup} if it satisfies  
\ben[ (CS6)]  
\item for all $x,y$, $R(x)=D(y) \Rightarrow (D(xy)=D(x)\ \&\ R(xy)=R(y))$;
\een
\item $S$ satisfies the {\em left (respectively right) congruence condition} if it satisfies the law 
$$D(xy)=D(xD(y))\mbox{ (respectively }R(xy)=R(R(x)y)),$$ and it is said to satisfy the {\em congruence conditions} if it satisfies both the left and right congruence conditions;
\item $S$ satisfies the {\em left (respectively right) weak congruence condition} if it satisfies the law $$D(xy)=D(xD(R(x)y)))\mbox{ (respectively }R(xy)=R(R(xD(y))y)),$$ and it satisfies the {\em weak congruence conditions} if it satisfies both;
\item $S$ satisfies the {\em strong match-up conditions} if for all $x,y\in S$, $R(xD(y))=D(R(x)y)$ and $xy=xD(y)R(x)y$;
\item $S$ satisfies the {\em match-up conditions} if it satisfies both 
\bi
\item  the law $R(xD(y))=D(R(xD(y))y)$, the {\em left match-up condition}, and
\item the law $D(R(x)y)=R(xD(R(y)x))$, the {\em right match-up condition};
\ei
\item $S$ is {\em left semi-localisable} if it satisfies the left congruence and right weak congruence conditions and $D(S)$ is a band, and we define {\em right semi-localisability} dually;
\item $S$ is {\em D-ample} if it satisfies $xD(y)=D(xy)x$ for all $x,y\in S$;
\item $S$ is a {\em left restriction semigroup with range} if $(S,\cdot,D)$ is a left restriction semigroup and $R$ satisfies $R(xy)R(y)=R(xy)$.
\ei
\end{dfn}

In the remainder of this section, we motivate each of these properties in turn, give  examples, and explore how the properties relate to one-another.  These properties will all feature in our generalised ESN theorems that follow.

\subsection{Cat-semigroups}

Following the approach of \cite{law2} and \cite{fitzkin}, in any precat-semigroup, a partial operation may be defined as follows.

\begin{dfn}  \label{restr}
Let $S$ be a precat-semigroup.  For all $s,t\in S$, define the partial binary operation $\circ$ by setting
$$s\circ t=st\mbox{ providing $R(s)=D(t)$,}$$
and undefined otherwise; this is the {\em restricted product}.  Define ${\mc C}(S)=(S,\circ,D,R)$.
\end{dfn}

Generally, if $S$ is a precat-semigroup, then it is easy to see that ${\mc C}(S)$ is a category if and only if $S$ is a cat-semigroup.  In particular, localisable semigroups are cat-semigroups.

\begin{dfn}
If $S$ is a cat-semigroup then we call ${\mc C}(S)$ the {\em category determined by} $S$.  
\end{dfn}

Unblike the class of precat-semigroups, the class of cat-semigroups is not a variety of biunary semigroups. 

\begin{eg}  \label{catqv}
A cat-semigroup having a quotient that is not a cat-semigroup.
\end{eg} 
\vspace{-5pt}
Let $S=\{a,g,e,1\}\subseteq I(X)$ where $X=\{w,x,y,z\}$ and 
$$a=\{(w,x),(x,w)\},\ g=\{(w.w),(x,x)\},\ e=\{(w.w),(x,x),(y,y)\},$$
and $1$ is the identity function on $X$.  Then $S$ is a subsemigroup of $I(X)$, with multiplication table as follows:
$$	\begin{array}{c|cccc}
	\cdot&a&g&e&1\\
	\hline
	a&g&a&a&a\\
	g&a&g&g&g\\
	e&a&g&e&e\\
	1&a&g&e&1
	\end{array}.
	$$
Clearly, $S$ is commutative, and indeed is an inverse subsemigroup of $I(X)$ (since $a^\prime=a$ with all other elements idempotent), and therefore comes equipped with in-built notions of domain and range.  However, it can also be viewed as a cat-semigroup in which $D(a)=e$, $R(a)=1$, and $D(s)=R(s)=s$ for all other $s\in S$.  When checking this, only (CS6) is not obvious.   But for this, if $R(s)=D(t)$ and $s,t\in D(S)$, then $s=t$ and so $D(st)=D(s^2)=D(s)$, and similarly $R(st)=R(t)$; if $R(s)=D(a)$ for some $s\in S$ then $R(s)=e$ so $s=e$ and so $D(sa)=D(ea)=D(a)=e=D(e)$; and if $R(a)=D(s)$ for some $s\in S$ then $D(s)=1$ so $s=1$, and so $D(as)=D(a)$ and $R(as)=R(a)=1=R(s)$.

Now $S$ has a semigroup congruence collapsing $e,1$ and respecting $D$ and $R$, as is easily seen.  The resulting quotient $S'=\{\{a\},\{g\},\{e,1\}\}$ is a precat-semigroup (since these form a variety), but has $D(\{a\})=R(\{a\})=\{e,1\}$, yet $R(\{a\}^2)=R(\{g\})=\{g\}\neq R(\{a\})$, so (CS6) fails and so $S'$ is not a cat-semigroup.  

\begin{cor} 
The class of cat-semigroups is a proper quasivariety.
\end{cor}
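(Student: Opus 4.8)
The plan is to establish two separate facts: first, that the class of cat-semigroups is a quasivariety; and second, that it is not a variety. Taken together these say precisely that the class is a proper quasivariety. I would invoke the two standard characterisations from universal algebra. A class of algebras in a fixed signature is a quasivariety if and only if it is axiomatisable by quasi-identities (equivalently, closed under isomorphisms, subalgebras, direct products and ultraproducts); while by Birkhoff's theorem such a class is a variety if and only if it is, in addition, closed under homomorphic images.

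For the first point, I would observe that cat-semigroups are exactly the precat-semigroups satisfying (CS6). The precat-semigroup laws (CS1)--(CS5) are equations, hence quasi-identities with empty premise. The law (CS6) has the form of an implication $R(x)=D(y)\Rightarrow(D(xy)=D(x)\ \&\ R(xy)=R(y))$; splitting the conjoined conclusion turns it into the two genuine quasi-identities $R(x)=D(y)\Rightarrow D(xy)=D(x)$ and $R(x)=D(y)\Rightarrow R(xy)=R(y)$, each with a single equation as premise and a single equation as conclusion. Thus the class of cat-semigroups is defined by a set of quasi-identities in the biunary signature $(\cdot,D,R)$, and is therefore a quasivariety.

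For the second point, I would appeal directly to Example~\ref{catqv}. There we exhibited a cat-semigroup $S$ together with a semigroup congruence respecting $D$ and $R$, whose quotient $S'$ is a precat-semigroup that fails (CS6) and hence is not a cat-semigroup. Since $S'$ is a homomorphic image of the cat-semigroup $S$ in the signature $(\cdot,D,R)$, the class of cat-semigroups is not closed under homomorphic images, so by Birkhoff's theorem it is not a variety. Combining the two points, the class is a quasivariety that is not a variety, that is, a proper quasivariety.

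Once the two characterisations are in hand the argument is essentially bookkeeping; the only point requiring a little care is the observation that (CS6), despite its conjoined conclusion, is equivalent to a pair of legitimate quasi-identities, so that the quasivariety-by-quasi-identities characterisation genuinely applies. The substantive content -- the failure of closure under quotients -- has already been supplied by Example~\ref{catqv}, so no further construction is needed, and I expect no real obstacle beyond this verification.
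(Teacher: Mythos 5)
Your argument is correct and is exactly the one the paper intends: the corollary is stated immediately after Example~\ref{catqv} precisely because that example supplies the failure of closure under homomorphic images, while the axiomatisation by (CS1)--(CS5) together with the quasi-identity (CS6) gives quasivariety status. Your only addition -- splitting the conjoined conclusion of (CS6) into two quasi-identities -- is a harmless piece of bookkeeping that the paper leaves implicit.
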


\subsection{The congruence conditions}

It follows immediately from what is noted in Section 2.1 of \cite{ordehrs} that localisable semigroups are nothing but precat-semigroups satisfying the congruence conditions in which the projections form a band (a semilattice in the case of Ehresmann semigroups); indeed, any band $S$ can be turned into a localisable semigroup if we define $D(s)=R(s)=s$ for all $s\in S$.  

It also follows that a precat-semigroup $S$ is localisable if and only if it is both left semi-localisable and right semi-localisable.  (Note that the term ``left localisable" was used in \cite{fitzkin} for a unary semigroup satisfying the defining laws for localisable semigroup that involve $D$ but not $R$, and dually for ``right localisable".)

Clearly, the congruence conditions imply the weak congruence conditions within precat-semigroups.  It is immediate that if a precat-semigroup satisfies the congruence conditions then it is a cat-semigroup and so ${\mc C}(S)$ is a category, but in fact the weak congruence conditions are sufficient.

\begin{pro}  \label{fbv}
If a precat-semigroup satisfies the weak congruence conditions then it is a cat-semigroup.  Hence, the class of cat-semigroups satisfying the (weak) congruence conditions is the finitely based variety of precat-semigroups satisfying them.
\end{pro}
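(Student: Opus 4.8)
The plan is to show that the weak congruence conditions, together with (CS1)--(CS5), force (CS6) to hold, so that every precat-semigroup satisfying the weak congruence conditions is automatically a cat-semigroup. So I would fix $x,y\in S$ with $R(x)=D(y)$ and verify the two halves of (CS6) separately. For $D(xy)=D(x)$, I would apply the left weak congruence condition to get $D(xy)=D(xD(R(x)y))$, and then simplify the inner term using the hypothesis: by (CS1), $R(x)y=D(y)y=y$, so $D(R(x)y)=D(y)$, whence $xD(R(x)y)=xD(y)=xR(x)=x$ by (CS2). Applying $D$ gives $D(xD(R(x)y))=D(x)$, and therefore $D(xy)=D(x)$.

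The second half is handled dually using the right weak congruence condition $R(xy)=R(R(xD(y))y)$. Here $xD(y)=xR(x)=x$ by (CS2), so $R(xD(y))=R(x)$, and then $R(xD(y))y=R(x)y=D(y)y=y$ by (CS1). Applying $R$ yields $R(R(xD(y))y)=R(y)$, so $R(xy)=R(y)$. Together these give (CS6), so $S$ is a cat-semigroup. I expect this calculation to be entirely routine; the only thing to keep in mind is the systematic use of the hypothesis $R(x)=D(y)$ to collapse $R(x)y$ down to $y$ and $xD(y)$ down to $x$, which is precisely what lets the weak congruence laws reduce to the desired identities.

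For the concluding ``hence'', the key observation is that the argument above exhibits (CS6) as an equational consequence of (CS1)--(CS5) together with the two weak congruence identities. Since all of these are identities, the class of precat-semigroups satisfying the weak congruence conditions is a finitely based variety, and by the first part every member of it is a cat-semigroup; hence this variety coincides with the class of cat-semigroups satisfying the weak congruence conditions. The main subtlety to flag is that, although cat-semigroups on their own form only a quasivariety (by the preceding corollary, since (CS6) is a genuine quasi-identity), adjoining the weak congruence identities renders (CS6) derivable, so the resulting intersection collapses to an honest variety. Finally, since the congruence conditions imply the weak congruence conditions within precat-semigroups, the same reasoning applies verbatim with ``weak congruence'' replaced throughout by ``congruence'', giving the corresponding finitely based variety in that case as well.
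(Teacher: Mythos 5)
Your proof is correct and follows essentially the same route as the paper's: apply the weak congruence identity, then use $R(x)=D(y)$ together with (CS1) and (CS2) to collapse $D(xD(R(x)y))$ to $D(x)$ (and dually for $R$), with the ``hence'' clause following because (CS6) is thereby an equational consequence of the finitely many defining identities.
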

\begin{proof}
Suppose $R(x)=D(y)$ in the precat-semigroup $S$ satisfying the weak congruence conditions.  Then $$D(xy)=D(xD(R(x)y)))=D(xD(D(y)y)))=D(xD(y))=D(xR(x))=D(x).$$  
Dually, $R(xy)=R(y)$. 
\end{proof}

\subsection{The match-up conditions}

The various match-up conditions are motivated by the desire to capture the multiplication operation in a cat-semigroup $S$ in terms of the category it determines, $C={\mc C}(S)$.  Following \cite{fitzkin} and \cite{law2}, one may enrich $C={\mc C}(S)$ by retaining arbitrary products of projections with semigroup elements, so we define a ```biaction" of $C^0$ on $C$ via $e|s=es$ and $s|e=se$ for all $e\in C^0=D(S)$ and $s\in C=S$.  Note that we use the same notation for both actions, since there is only ambiguity when both arguments are from $D(S)$, and then the interpretation does not matter!  

Recall that for a localisable semigroup $S$, the semigroup operation can be recovered from ${\mc C}(S)$ equipped with this biaction via $st=(s|D(t))\circ (R(s)|t)$ for all $s,t\in S$.  The same process of recovery of the original cat-semigroup from the category with biaction it determines can take place as long as $(s|D(t))\circ (R(s)|t)$ as just defined exists in ${\mc C}(S)$ and correctly calculates $st$ in $S$; it is easy to see that this requirement is equivalent to the strong match-up conditions.  As we shall see in Subsection \ref{sec:catdet}, there are non-localisable cat-semigroups that satisfy the strong match-up conditions.

We next show that the strong match-up conditions imply the match-up conditions, so our nomenclature is consistent. 

\begin{pro}  \label{match0}
Suppose a cat-semigroup $S$ satisfies the first law in the strong match-up conditions -- $R(sD(t))=D(R(s)t)$. Then $S$ satisfies the match-up conditions.  Hence the strong match-up conditions imply the match-up conditions.
\end{pro}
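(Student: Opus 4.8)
The plan is to derive both match-up conditions purely equationally from the single hypothesis $R(sD(t))=D(R(s)t)$ (call it $(\ast)$) together with the precat-semigroup axioms; in fact the cat-semigroup law (CS6) will not be needed. I would first use $(\ast)$ to collapse the left-hand sides. Applying $(\ast)$ with $s=x$, $t=y$ gives $R(xD(y))=D(R(x)y)$, so setting $e:=D(R(x)y)=R(xD(y))$, which is a projection, the left match-up condition $R(xD(y))=D(R(xD(y))y)$ reduces to the single assertion $e=D(ey)$.

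The obstacle is that $(\ast)$ behaves like an involution interchanging the expressions $R(aD(b))$ and $D(R(a)b)$, so trying to compute $D(ey)$ by re-applying $(\ast)$ merely returns $e$ and the manipulation goes in circles. The device that breaks the circle is to introduce the auxiliary element $c:=R(x)D(y)$ and apply $(\ast)$ to a degenerate product. Two facts are immediate: $cD(y)=R(x)D(y)D(y)=R(x)D(y)=c$, since $D(y)$ is idempotent by (CS3); and $R(c)=e$, because $(\ast)$ with $s=R(x)$, $t=y$ gives $R(R(x)D(y))=D(R(R(x))y)=D(R(x)y)=e$, using $R(R(x))=R(x)$ as $R(x)$ is a projection. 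Now $(\ast)$ with $s=c$, $t=y$ yields $R(cD(y))=D(R(c)y)$; by the first fact the left side is $R(c)=e$, and by the second the right side is $D(ey)$, so $e=D(ey)$, which is exactly the reduced left match-up condition.

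For the right match-up condition I would invoke left--right duality. The axioms (CS1)--(CS6) and the hypothesis $(\ast)$ are all invariant under the dual that reverses multiplication and interchanges $D$ and $R$, and the right match-up condition is the image of the left one under this duality; hence the formal dual of the argument above, run in the opposite semigroup with a dual choice of auxiliary element, establishes it with no extra work. Finally, since the first of the strong match-up laws is exactly $(\ast)$, the closing assertion that the strong match-up conditions imply the match-up conditions is then immediate.

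The step I expect to be the crux is the introduction of $c=R(x)D(y)$ and the recognition that $(\ast)$ should be fed the trivial identity $cD(y)=c$: this is precisely what turns the otherwise circular use of $(\ast)$ into an honest evaluation of $D(ey)$.
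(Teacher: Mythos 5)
Your argument is correct and rests on the same key idea as the paper's proof: exploit the idempotency of $D(y)$ so that $(\ast)$, applied to a product of the form $cD(y)$ with $cD(y)=c$, evaluates $D(R(c)y)$ nontrivially; the paper simply takes $c=xD(y)$ itself (so $R((xD(y))D(y))=D(R(xD(y))y)$ gives the left match-up condition in one line), whereas your detour through $c=R(x)D(y)$ needs two extra applications of $(\ast)$ but reaches the same conclusion. The appeal to left--right duality for the right match-up condition matches the paper exactly.
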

\begin{proof}
For all $x,y\in S$, $R(xD(y))= R((xD(y))D(y))=D(R(xD(y))y)$, upon using the assumed law with $s=xD(y)$ and $t=y$.  The right match-up condition follows dually.
\end{proof}

Note that in any precat-semigroup, we have $$st=(sD(t))(R(sD(t))t)=(sD(R(s)t))(R(s)t).$$  
Moreover, if (and only if) the the left (respectively right) match-up condition holds, we may express $st$ within ${\mc C}(S)$ as $s|D(t)\circ R(s|D(t))|t$ (respectively $s|D(R(s)|t)\circ R(s)|t$).  

The main significance of the weak congruence conditions is that they can be used to equationally characterise those cat-semigroups satisfying only one of the match-up conditions.

\begin{pro}  \label{MUC2}
The class of cat-semigroups satisfying the left match-up condition is the variety of precat-semigroups satisfying the left congruence and right weak congruence conditions plus the law $R(st)=D(R(st)R(t))$.  
\end{pro}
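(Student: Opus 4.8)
The plan is to show that the variety in the statement -- call it ${\mc V}$, the class of precat-semigroups satisfying the left congruence condition, the right weak congruence condition, and the law $R(st)=D(R(st)R(t))$ -- coincides with the class ${\mc A}$ of cat-semigroups satisfying the left match-up condition, by proving the two inclusions separately. Two ingredients from the excerpt do most of the work: the identity $st=(sD(t))(R(sD(t))t)$, valid in every precat-semigroup, and Proposition \ref{fbv}, that a precat-semigroup satisfying both weak congruence conditions is a cat-semigroup. I would also record the easy fact (already flagged in the excerpt) that the left congruence condition implies the left weak congruence condition: applying $D(ab)=D(aD(b))$ with second argument $R(x)y$ and using $xR(x)=x$ (CS2) gives $D(xy)=D(xD(R(x)y))$.

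For the inclusion ${\mc V}\subseteq{\mc A}$, let $S\in{\mc V}$. By the observation above $S$ satisfies the left weak congruence condition, and it satisfies the right weak congruence condition by hypothesis, so Proposition \ref{fbv} makes $S$ a cat-semigroup. It then remains to derive the left match-up condition $R(xD(y))=D(R(xD(y))y)$. Here I would specialise $t$ to $D(y)$ in the law $R(st)=D(R(st)R(t))$ and use $R(D(y))=D(y)$ (CS5) to obtain $R(xD(y))=D(R(xD(y))D(y))$; the left congruence condition, applied with first argument $R(xD(y))$ and second argument $y$, rewrites the right-hand side as $D(R(xD(y))y)$, which is exactly the left match-up condition.

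For the reverse inclusion ${\mc A}\subseteq{\mc V}$, let $S$ be a cat-semigroup satisfying the left match-up condition, so that $C={\mc C}(S)$ is a category. The key structural point is that the left match-up condition $R(sD(t))=D(R(sD(t))t)$ is precisely the assertion that the canonical factors $sD(t)$ and $R(sD(t))t$ are composable in $C$; combined with the precat identity $st=(sD(t))(R(sD(t))t)$ this shows $st=(sD(t))\circ(R(sD(t))t)$ in $C$. Reading off the domain and range of this composite via the category axiom (C4) then yields $D(st)=D(sD(t))$ and $R(st)=R(R(sD(t))t)$, which are exactly the left congruence and right weak congruence conditions. The remaining law $R(st)=D(R(st)R(t))$ follows from the left match-up condition alone, without the category structure: substituting $x=st$ and $y=R(t)$ into $R(xD(y))=D(R(xD(y))y)$ and using $D(R(t))=R(t)$ (CS4) together with $stR(t)=s(tR(t))=st$ (CS2) collapses it to the desired identity.

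I expect the main obstacle to be recognising, in the reverse inclusion, that the left match-up condition is exactly the composability of the canonical decomposition $st=(sD(t))(R(sD(t))t)$ in $C={\mc C}(S)$; once this is seen, the two congruence identities fall out immediately from the axiom governing the domain and range of a composite. The remaining steps are short substitutions, the only delicate part being the choice of which variables to specialise -- notably $y=R(t)$ and $x=st$ for the extra law, and $t=D(y)$ for the left match-up condition -- so that the precat axioms (CS2), (CS4), (CS5) and the congruence laws apply cleanly.
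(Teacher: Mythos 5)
Your proposal is correct and follows essentially the same route as the paper: both directions hinge on the decomposition $st=(sD(t))(R(sD(t))t)$ made composable in ${\mc C}(S)$ by the left match-up condition (yielding the two congruence-type laws via (CS6)), the substitution $y=R(t)$ for the extra law, and Proposition \ref{fbv} together with the specialisation $t=D(y)$ for the converse. The only cosmetic difference is that you make explicit the (correct) observation that the left congruence condition implies the left weak congruence condition before invoking Proposition \ref{fbv}, which the paper leaves implicit.
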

\begin{proof}
Suppose $S$ is a cat-semigroup satisfying the left match-up condition. Then for all $s,t\in S$, $R(sD(t))=D(R(sD(t))t)$, so letting $x=sD(t)$ and $y=R(sD(t))t$, we see that $xy=sD(t)R(sD(t))t=sD(t)t=st$.  But $R(x)=D(y)$, and so using the first cat-semigroup quasiequation, $D(xy)=D(x)$, so $D(st)=D(sD(t))$; using the second cat-semigroup law, $R(xy)=R(y)$, so $R(st)=R(R(sD(t))t)$.  So $S$ satisfies the left congruence and right weak congruence conditions.  Hence, for all $s,t\in S$,
\bea
R(st) &=& R(stR(t))\\
&=& R(stD(R(t)))\\
&=& D(R(stD(R(t)))R(t))\mbox{ by the left match-up condition}\\
&=& D(R(stR(t))R(t))\\
&=& D(R(st)R(t)).
\eea
Conversely, suppose $S$ is a precat-semigroup satisfying the left congruence and right weak congruence conditions plus the law $R(st)=D(R(st)R(t))$ for all $s,t\in S$.  Then it is a cat-semigroup by Proposition \ref{fbv}.  Moreover, 
\bea
R(sD(t)) 
&=& D(R(sD(t))R(D(t)))\mbox{ by the first additional law}\\
&=& D(R(sD(t))D(t))\\
&=& D(R(sD(t))t)\mbox{ by the left congruence condition.}
\eea
Hence the left match-up condition is satisfied.
\end{proof}

From Proposition \ref{MUC2} and its dual, and the fact that the congruence conditions imply the weak congruence conditions, we obtain the following.

\begin{cor}  \label{matchcong}
The class of cat-semigroups satisfying the match-up conditions is the variety of precat-semigroups satisfying the congruence conditions and the two laws 
\bi
\item $R(st)=D(R(st)R(t))$ and
\item $D(st)=R(D(s)D(st))$.
\ei
\end{cor}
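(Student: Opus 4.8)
The plan is to obtain the result purely by combining Proposition \ref{MUC2} with its dual and then pruning redundant axioms. First I would note that the match-up conditions consist of the left match-up condition together with the right match-up condition, so the class of cat-semigroups satisfying the match-up conditions is precisely the intersection of the class of cat-semigroups satisfying the left match-up condition with the class of cat-semigroups satisfying the right match-up condition.

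Next I would invoke Proposition \ref{MUC2} to rewrite the first of these classes as the variety of precat-semigroups satisfying the left congruence condition, the right weak congruence condition, and the law $R(st)=D(R(st)R(t))$. Dualising Proposition \ref{MUC2} (interchanging $D$ with $R$ and reversing the semigroup product) gives the companion description of the second class as the variety of precat-semigroups satisfying the right congruence condition, the left weak congruence condition, and the law $D(st)=R(D(s)D(st))$. Intersecting, the class in question is the class of precat-semigroups satisfying both congruence conditions, both weak congruence conditions, and the two displayed laws.

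Finally I would appeal to the observation, made earlier, that within precat-semigroups the congruence conditions imply the weak congruence conditions. This renders the two weak congruence axioms redundant, so the description collapses to that of precat-semigroups satisfying the congruence conditions together with the laws $R(st)=D(R(st)R(t))$ and $D(st)=R(D(s)D(st))$. Since every axiom in this list -- the precat-semigroup laws (CS1)--(CS5), the congruence conditions, and the two additional laws -- is an identity, the resulting class is a variety, as claimed.

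I expect the only point requiring genuine care to be the correct formation of the dual of Proposition \ref{MUC2}: one must check that under the interchange of $D$ and $R$ and reversal of products, the left congruence and right weak congruence conditions become the right congruence and left weak congruence conditions respectively, and that the law $R(st)=D(R(st)R(t))$ becomes exactly $D(st)=R(D(s)D(st))$. Once this duality is verified, the remainder is routine bookkeeping, with the only substantive input being the previously noted implication from the congruence conditions to the weak congruence conditions.
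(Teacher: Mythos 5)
Your proposal is correct and is essentially the paper's own argument: the paper derives the corollary exactly by combining Proposition \ref{MUC2} with its dual and then discarding the weak congruence conditions via the previously noted implication from the congruence conditions. Your duality bookkeeping (left congruence $\leftrightarrow$ right congruence, right weak congruence $\leftrightarrow$ left weak congruence, $R(st)=D(R(st)R(t)) \leftrightarrow D(st)=R(D(s)D(st))$) checks out, so nothing further is needed.
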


It follows from this result that DRC-semigroups as defined in \cite{jones} and considered from an ESN theorem viewpoint in \cite{Swang} are cat-semigroups satisfying the match-up conditions, since these are precat-semigroups satisfying the congruence conditions and some further laws that obviously imply the laws $R(st)=D(R(st)R(t))$ and $D(st)=R(D(s)D(st))$; specifically, they satisfy the laws $R(st)=R(st)R(t)$ and $D(st)=D(s)D(st)$.  

Like localisable semigroups, DRC-semigroups generalise Ehresmann semigroups.  However, although few natural examples of localisable semigroups that are not Ehresmann are known, DRC-semigroups that are not Ehresmann arise very naturally, for example in connection with operator algebra theory.  As discussed in \cite{DRsemi}, the multiplicative semigroups of Rickart *-rings are DRC-semigroups in which $D(S)$ is reduced.  (Rickart *-rings include all Baer *-rings, which include as examples the rings of all bounded linear operators on a Hilbert space.) 

There are also important examples where only one of the match-up conditions holds but not the other; see Subsection \ref{subsec:Dample}.

\subsection{The strong match-up conditions}  \label{sec:catdet}

\begin{lem}   \label{stronglem}
Let $S$ be a cat-semigroup.  Then $S$ satisfies the law $R(sD(t))=D(R(s)t)$ if and only if it satisfies the congruence conditions and the law $D(ef)=R(ef)$ for all $e,f\in D(S)$.   
\end{lem}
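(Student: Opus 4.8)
The plan is to prove the two implications separately, noting that in both directions the content is concentrated on products of projections, which is exactly where the law $D(ef)=R(ef)$ resides. In the forward direction I would use the strong match-up law to manufacture matching conditions that let (CS6) fire; in the converse direction I would use the congruence conditions to push everything onto $D(S)$ and then apply $D(ef)=R(ef)$.

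For the forward direction, assume $S$ is a cat-semigroup satisfying $R(sD(t))=D(R(s)t)$. The projection law is immediate: specialising to $s=e$, $t=f$ with $e,f\in D(S)$ gives $D(t)=f$ and $R(s)=e$, so $R(ef)=D(ef)$. The congruence conditions require more care, and the key idea is to feed two different factorisations of a product $st$ into (CS6). First, using (CS1)--(CS2) one has $st=(sD(t))\circ(R(sD(t))t)$; instantiating the hypothesis with $sD(t)$ in place of $s$ and using $D(t)^2=D(t)$ yields $R(sD(t))=D(R(sD(t))t)$, which is precisely the matching condition $R(sD(t))=D(R(sD(t))t)$ making this a genuine restricted product, so (CS6) delivers $D(st)=D(sD(t))$, the left congruence condition. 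Symmetrically, writing $st=(sD(R(s)t))\circ(R(s)t)$ and instantiating the hypothesis with $R(s)t$ in place of $t$ (using $R(s)^2=R(s)$) gives $R(sD(R(s)t))=D(R(s)t)$, whereupon (CS6) delivers $R(st)=R(R(s)t)$, the right congruence condition.

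For the converse, assume the congruence conditions together with $D(ef)=R(ef)$ for all $e,f\in D(S)$, and simply compute. Since $R(s)$ and $D(t)$ are projections, the right congruence condition gives $R(sD(t))=R(R(s)D(t))$, the projection law turns this into $D(R(s)D(t))$, and the left congruence condition gives $D(R(s)D(t))=D(R(s)t)$; chaining these yields $R(sD(t))=D(R(s)t)$. The role of the congruence conditions here is exactly to replace $s$ by $R(s)$ inside the range and $t$ by $D(t)$ inside the domain, reducing the identity to the projection law.

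The main obstacle is the forward direction: one must check in each factorisation that the range of the left factor equals the domain of the right factor, so that (CS6) genuinely applies, and it is precisely the suitably instantiated strong match-up law that supplies these matching conditions. (Alternatively, the congruence conditions could be extracted by invoking Proposition \ref{match0} to obtain the match-up conditions and then Proposition \ref{MUC2} together with its dual, but the direct factorisation argument is more self-contained.)
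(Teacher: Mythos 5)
Your proof is correct and follows essentially the same route as the paper: the converse direction is the identical three-step computation $R(sD(t))=R(R(s)D(t))=D(R(s)D(t))=D(R(s)t)$, and your forward direction simply inlines the arguments that the paper obtains by citing Proposition \ref{match0} (the instantiation $s\mapsto sD(t)$ giving the matching condition) and Corollary \ref{matchcong} (feeding the resulting factorisations of $st$ into (CS6)). The only cosmetic difference is that you derive $D(ef)=R(ef)$ by direct specialisation before the congruence conditions rather than after, which is equally valid.
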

\begin{proof}
Suppose $S$ is a cat-semigroup satisfying the law $R(sD(t))=D(R(s)t)$.  By Proposition \ref{match0}, it satisfies the match-up conditions, and by Corollary \ref{matchcong}, it satisfies the congruence conditions.  Hence, for $e,f\in D(S)$, $R(ef)=R(eD(f))=D(R(e)f)=D(ef)$.  

Conversely, if $S$ satisfies the congruence conditions and $D(ef)=R(ef)$ for all $e,f\in D(S)$, then for all $s,t\in S$, $R(sD(t))=R(R(s)D(t))=D(R(s)D(t))=D(R(s)t)$.
\end{proof}

\begin{cor}  \label{strongmatch}
The class of cat-semigroups $S$ satisfying the strong match-up conditions is the variety of precat-semigroups satisfying the congruence conditions together with the laws $D(ef)=R(ef)$ for all $e,f\in D(S)$ and the law $sD(t)R(s)t=st$.  
\end{cor}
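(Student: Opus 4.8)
The plan is to obtain the corollary as an immediate two-way inclusion of classes, resting almost entirely on Lemma \ref{stronglem} together with Proposition \ref{fbv}. Let $\mathcal{A}$ denote the class of cat-semigroups satisfying the strong match-up conditions and $\mathcal{B}$ the class of precat-semigroups satisfying the congruence conditions, the laws $D(ef)=R(ef)$ for all $e,f\in D(S)$, and the law $sD(t)R(s)t=st$. The key observation, which I would exploit throughout, is that the strong match-up conditions consist of precisely two separate laws, namely $R(xD(y))=D(R(x)y)$ and $xy=xD(y)R(x)y$; the argument amounts to matching each of these against the ingredients listed in the statement.

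For $\mathcal{A}\subseteq\mathcal{B}$, I would take a cat-semigroup $S$ satisfying the strong match-up conditions. Since $S$ then satisfies $R(sD(t))=D(R(s)t)$, the forward direction of Lemma \ref{stronglem} gives at once that $S$ satisfies the congruence conditions and that $D(ef)=R(ef)$ for all $e,f\in D(S)$; the remaining required law $sD(t)R(s)t=st$ is simply the second strong match-up law. For the reverse inclusion, I would start with a precat-semigroup $S$ satisfying the three listed families of laws. The congruence conditions imply the weak congruence conditions, so Proposition \ref{fbv} shows that $S$ is a cat-semigroup, and the converse direction of Lemma \ref{stronglem} then produces the first strong match-up law $R(sD(t))=D(R(s)t)$ from the congruence conditions together with $D(ef)=R(ef)$; the law $sD(t)R(s)t=st$ is once more the second. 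Hence $\mathcal{A}=\mathcal{B}$.

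Since the only genuine content beyond this bookkeeping is the assertion that $\mathcal{B}$ is a \emph{variety}, I would close by verifying that every defining condition of $\mathcal{B}$ is equational. The congruence conditions and the law $sD(t)R(s)t=st$ are plainly identities, and the condition $D(ef)=R(ef)$ for all $e,f\in D(S)$, which quantifies over projections rather than over arbitrary elements, can be rewritten as the single identity $D(D(x)D(y))=R(D(x)D(y))$, since every projection has the form $D(x)$. I expect this translation of the quantified condition into a bona fide identity to be the only step requiring any care; with it in hand, $\mathcal{B}$ is defined by identities and is therefore a variety, which completes the proof.
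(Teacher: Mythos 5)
Your proposal is correct and follows essentially the route the paper intends: the corollary is stated without proof precisely because it is the immediate combination of Lemma \ref{stronglem} (applied in both directions) with Proposition \ref{fbv} to secure the cat-semigroup property in the reverse inclusion, together with the observation that the second strong match-up law is verbatim the law $sD(t)R(s)t=st$. Your closing remark that the projection-quantified condition $D(ef)=R(ef)$ rewrites as the genuine identity $D(D(x)D(y))=R(D(x)D(y))$ is a point the paper leaves implicit, and it is exactly what justifies the word ``variety'' in the statement.
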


We have already noted that every localisable semigroup satisfies the strong match-up conditions.  The converse fails.

\begin{eg}  \label{strongnotloc} A non-localisable cat-semigroup satisfying the strong match-up conditions.
\end{eg}
Let $S=\{e,f,a\}\subseteq T(X)$, where $X=\{x,y,z\}$ and $e=\{(x,x),(y,z),(z,z)\}$, $f=(x,y),(y,y),(z,y)\}$ and $a=\{(x,z),(y,z),(z,z)\}$.  Then $S$ is a band with multiplication as follows:
$$	\begin{array}{c|ccc}
	\cdot&e&f&a\\
	\hline
	e&e&f&a\\
	f&a&f&a\\

	a&a&f&a
	\end{array}.
	$$
Define unary operations $D$ and $R$ on $S$ by setting $D(a)=e=R(a)$, with $D(s)=R(s)=s$ for $s\neq a$. Then it is routine to check the laws for precat-semigroups are satisfied, and $D(S)=\{e,f\}$.  For the left congruence condition, to check that $D(st)=D(sD(t))$, only cases where $t\not\in D(S)$ are non-trivial, that is, where $t=a$.  But then $D(sa)=D(a)=e$, while $D(sD(a))=D(se)= e$ also.  Similarly, $R(at)=R(et)$ for each $t\in S$, so the right congruence condition holds.  Also, $D(ef)=D(f)=f=R(f)=R(ef)$, and $D(fe)=D(a)=e=R(a)=R(fe)$.  Finally, if $s,t\in D(S)$, $sD(t)R(s)t=stst=st$ since $S$ is a band, while if $s=a$, $sD(t)R(s)t=aD(t)et=aet=at=st$, and if $t=a$, then $sD(t)R(s)t=seR(s)a=sea=sa=st$.  So by Corollary \ref{strongmatch}, $S$ satisfies the strong match-up conditions.  But $fe=a$ so $D(S)$ is not a band, and so $S$ is not localisable.  
\medskip

As noted above, it is not easy to find natural examples of localisable semigroups that are not also Ehresmann semigroups, and the same therefore applies to cat-semigroups satisfying the strong match-up conditions.  However, our methods require only the strong match-up conditions and we therefore make use of these.

\begin{pro}  \label{match}
Suppose a cat-semigroup $S$ satisfies the first law in the strong match-up conditions -- $R(sD(t))=D(R(s)t)$. Then 
$$st=s|D(t)\circ R(s|D(t))|t=s|D(R(s)|t)\circ R(s)|t\mbox{ in }{\mc C}(S).$$  
Moreover, the corresponding arguments in each of these category products coincide if and only if $S$ is localisable.
\end{pro}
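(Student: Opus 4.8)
The plan is to deal first with the two displayed factorisations and then with the \emph{moreover} clause. For the factorisations, I would first check that in any precat-semigroup both candidate products compute $st$ as semigroup elements: grouping and using $xR(x)=x$ together with $D(t)t=t$ gives $(sD(t))(R(sD(t))t)=(sD(t))t=st$, and dually, applying $D(x)x=x$ to $x=R(s)t$, $(sD(R(s)t))(R(s)t)=s(R(s)t)=st$. It then remains to see that each is a genuine restricted product in ${\mc C}(S)$. For the first, the requirement $R(sD(t))=D(R(sD(t))t)$ is precisely the left match-up condition, and Proposition \ref{match0} shows the hypothesis forces the match-up conditions. For the second, applying the assumed law $R(sD(t))=D(R(s)t)$ with $t$ replaced by $R(s)t$, and using $R(s)^2=R(s)$, gives $R(sD(R(s)t))=D(R(s)t)$, which is exactly the condition for $s|D(R(s)|t)\circ R(s)|t$ to be defined. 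Since a restricted product agrees with the semigroup product wherever it exists, both displayed expressions equal $st$.

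For the \emph{moreover} clause I would first record that coincidence of the corresponding arguments means $sD(t)=sD(R(s)t)$ (first arguments) and $R(sD(t))t=R(s)t$ (second arguments), for all $s,t$. By Lemma \ref{stronglem} the standing hypothesis is equivalent to the congruence conditions together with $D(ef)=R(ef)$ for all $e,f\in D(S)$; since localisable semigroups are exactly the precat-semigroups satisfying the congruence conditions whose projections form a band, the entire clause reduces to showing that the two coincidence identities hold if and only if $D(S)$ is a band. The forward direction is routine: assuming $S$ localisable, left congruence and closure of $D(S)$ give $D(R(s)t)=D(R(s)D(t))=R(s)D(t)$, and dually $R(sD(t))=R(R(s)D(t))=R(s)D(t)$; substituting these into the two identities and using $sR(s)=s$ and $D(t)t=t$ verifies both.

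The converse contains the one step that is not mechanical. Assuming the coincidence identities, I would fix $e,f\in D(S)$ and set $p=D(ef)=R(ef)$ (equal by the standing hypothesis). The first identity with $s=e$, $t=f$ gives $ef=eD(ef)=ep$. The key move is then to apply the \emph{second} identity not to $(e,f)$ but to the pair of projections $(e,p)$: this yields $R(ep)\,p=ep$, and since $ep=ef$ we have $R(ep)=R(ef)=p$, so the left-hand side collapses to $p^2=p$ while the right-hand side is $ef$; hence $ef=p=D(ef)\in D(S)$. As $e,f$ were arbitrary, $D(S)$ is a band and $S$ is localisable. I expect the main obstacle to be exactly this: realising that the second coincidence identity should be evaluated at $(e,D(ef))$ rather than at $(e,f)$, after which the precat identities and the congruence conditions close the argument.
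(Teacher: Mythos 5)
Your proof is correct and follows essentially the same route as the paper: the first part rests on Proposition \ref{match0} (plus a direct check that the second product is defined), the forward direction of the \emph{moreover} clause uses the congruence conditions and the band property exactly as in the paper, and your ``key move'' of evaluating the second coincidence identity at $(e,D(ef))$ is precisely the instantiation the paper uses in its chain of equalities showing $D(ef)=ef$. Your packaging of the converse (via $p=D(ef)=R(ef)$ and $R(ep)p=ep$) is marginally cleaner but not a different argument.
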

\begin{proof}
Suppose $S$ satisfies the law $R(sD(t))=D(R(s)t)$.  We saw in Proposition \ref{match0} that the match-up conditions are satisfied, and so $st$ may be expressed in ${\mc C}(S)$ in the two ways indicated.  If $S$ is localisable, then it satisfies the congruence conditions and $D(S)$ is a band, so $sD(R(s)t)=sD(R(s)D(t))=sR(s)D(t)=sD(t)$, and similarly, $R(sD(t))t=R(s)t$, so the arguments in the two category products coincide.  Conversely, if $sD(R(s)t)=sD(t)$ and $R(sD(t))t=R(s)t$ for all $s,t$, then because $S$ satisfies the congruence conditions and $D(ef)=R(ef)$ for all $e,f\in D(S)$ by Lemma \ref{stronglem}, we have that for all $e,f\in D(S)$, 
$$D(ef)=D(eef)D(ef)=D(eD(ef))D(ef)=R(eD(ef))D(ef)$$
$$=R(eD(D(ef)))D(ef)=R(e)D(ef)=eD(ef)=eD(R(e)f)=eD(f)=ef,$$
so $D(S)$ is a band, and so $S$ is localisable.
\end{proof}

\subsection{Left semi-localisable semigroups} \label{subsec:leftsemiloc}

One-sided semi-localisability is natural because of the following.

\begin{pro}  \label{bandleftmatch}
The class of cat-semigroups satisfying the left match-up condition and in which the projections form a band is the variety of left semi-localisable precat-semigroups.
\end{pro}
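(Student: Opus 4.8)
The plan is to lean entirely on Proposition \ref{MUC2}, which already identifies the cat-semigroups satisfying the left match-up condition with the precat-semigroups satisfying the left congruence condition, the right weak congruence condition, and the single extra law $R(st)=D(R(st)R(t))$. Adjoining the band requirement on $D(S)$ to this description, the class named in the statement becomes exactly the precat-semigroups satisfying left congruence, right weak congruence, $R(st)=D(R(st)R(t))$, and ``$D(S)$ is a band''; meanwhile left semi-localisability is, by definition, precat-semigroups satisfying left congruence, right weak congruence, and ``$D(S)$ is a band''. Hence one inclusion is immediate: the first class carries all the defining conditions of the second plus one more. The two classes therefore coincide precisely when the law $R(st)=D(R(st)R(t))$ is a consequence of left semi-localisability, so the whole proof reduces to deriving that one law.

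The substance is thus to verify $R(st)=D(R(st)R(t))$ in an arbitrary left semi-localisable $S$, and for this I would use only the band condition and the right weak congruence condition (the left congruence condition is not needed). Since $R(t)$ is a projection it is fixed by $D$, and $tR(t)=t$ by (CS2) gives $stR(t)=st$, hence $R(stR(t))=R(st)$. Applying the right weak congruence condition $R(xy)=R(R(xD(y))y)$ with $x=st$ and $y=R(t)$, and using $D(R(t))=R(t)$ together with $stR(t)=st$, yields $R(st)=R(stR(t))=R(R(st)R(t))$. Now $R(st)R(t)$ is a product of two projections, so it is itself a projection because $D(S)$ is a band; being a projection it is fixed by both $R$ and $D$. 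Therefore $R(R(st)R(t))=R(st)R(t)$ and $D(R(st)R(t))=R(st)R(t)$, whence $R(st)=R(st)R(t)=D(R(st)R(t))$, as required.

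With this law established, Proposition \ref{MUC2} shows that a left semi-localisable semigroup is a cat-semigroup satisfying the left match-up condition, and its projections form a band by hypothesis, giving one inclusion; for the reverse, any cat-semigroup satisfying the left match-up condition is, again by Proposition \ref{MUC2}, a precat-semigroup satisfying the left congruence and right weak congruence conditions, so adjoining the band assumption yields precisely left semi-localisability. Finally, since ``$D(S)$ is a band'' is captured by the equational law $D(D(x)D(y))=D(x)D(y)$ (which together with (CS3) forces $D(S)$ to be closed under products of idempotents), all the defining conditions are equational and the common class is indeed a variety. I expect the only real obstacle to be the short computation of the second paragraph: the move is to insert the factor $R(t)$ via (CS2), peel it back off inside $R$ using the right weak congruence condition, and then invoke the band condition to recognise $R(st)R(t)$ as a projection fixed by both unary operations; everything else is bookkeeping against Proposition \ref{MUC2}.
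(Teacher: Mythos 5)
Your proof is correct and follows essentially the same route as the paper: both hinge on Proposition \ref{MUC2} together with a short computation that applies the right weak congruence condition to a product whose second factor is a projection and then uses the band hypothesis to absorb that projection. The only organisational difference is that the paper verifies the left match-up condition directly in the forward direction, whereas you verify the auxiliary law $R(st)=D(R(st)R(t))$ and invoke Proposition \ref{MUC2} in both directions; the two computations are essentially the same.
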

\begin{proof}
Let $S$ be a cat-semigroup in which $D(S)$ is a band.  If $S$ is left semi-localisable, then for all $s\in S$ and $e\in D(S)$, $R(se)=R(R(se)e)=R(se)e$ since $D(S)$ is a band.  Hence, for $s,t\in S$, 
$$D(R(sD(t))t)=D(R(sD(t))D(t))=D(R(sD(t)))=R(sD(t)),$$
so the left match-up condition holds.  The converse follows from Proposition \ref{MUC2}.
\end{proof}

From the above and Proposition \ref{match0}, we obtain the following.

\begin{cor}  \label{loc3way}
For a cat-semigroup $S$, the following are equivalent:
\bi
\item $S$ is localisable;
\item $S$ satisfies the strong match-up conditions and $D(S)$ is a band;
\item $S$ satisfies the match-up conditions and $D(S)$ is a band.
\ei
\end{cor}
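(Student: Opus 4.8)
The plan is to prove Corollary \ref{loc3way} by establishing a cycle of implications among the three conditions, leaning heavily on the results already proved in this section. The statement concerns a cat-semigroup $S$, so throughout I may assume (CS1)--(CS6) hold. First I would recall the two key facts established earlier: Corollary \ref{strongmatch} characterises cat-semigroups satisfying the strong match-up conditions as precat-semigroups satisfying the congruence conditions together with $D(ef)=R(ef)$ for all $e,f\in D(S)$ and the law $sD(t)R(s)t=st$; and Proposition \ref{match0} shows the strong match-up conditions imply the match-up conditions. Combined with the characterisation of localisable semigroups as precat-semigroups satisfying the congruence conditions in which $D(S)$ is a band (noted at the start of Subsection 2.3), these give me most of what I need almost for free.

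The implication that localisable implies the strong match-up conditions with $D(S)$ a band has already been observed: every localisable semigroup satisfies the strong match-up conditions, and by definition $D(S)$ is a band. The implication from the strong match-up version to the match-up version follows immediately from Proposition \ref{match0}, since that result shows the first strong match-up law forces the match-up conditions, and the band hypothesis on $D(S)$ is simply carried along unchanged. So the only substantive arrow to close the cycle is the third: assuming $S$ satisfies the match-up conditions and $D(S)$ is a band, I must deduce that $S$ is localisable.

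For that final implication I would invoke Proposition \ref{bandleftmatch} and its dual. Proposition \ref{bandleftmatch} states precisely that a cat-semigroup satisfying the left match-up condition with $D(S)$ a band is left semi-localisable, meaning it satisfies the left congruence and right weak congruence conditions with $D(S)$ a band. Its dual gives right semi-localisability from the right match-up condition. Since the match-up conditions comprise both the left and right match-up conditions, $S$ is simultaneously left semi-localisable and right semi-localisable. I would then quote the remark made earlier in the excerpt (in Subsection 2.3) that a precat-semigroup is localisable if and only if it is both left semi-localisable and right semi-localisable. Hence $S$ is localisable, closing the cycle.

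I do not anticipate a genuine obstacle here, since the corollary is explicitly flagged as following ``from the above and Proposition \ref{match0}''; the work is really one of bookkeeping, ensuring that the band hypothesis on $D(S)$ is present wherever needed and that I correctly pair each match-up condition with the appropriate (left or right) instance of Proposition \ref{bandleftmatch}. The subtlest point to state carefully is that combining left and right semi-localisability recovers full localisability via the two one-sided congruence/weak-congruence packages collapsing together: the left congruence condition from left semi-localisability and the right congruence condition from right semi-localisability give both congruence conditions, and the band hypothesis is common to both, so one lands exactly in the class of localisable semigroups rather than merely in some weaker intersection. I would verify this alignment explicitly rather than treat it as automatic.
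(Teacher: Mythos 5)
Your proposal is correct and follows exactly the route the paper intends: the paper's one-line justification (``From the above and Proposition \ref{match0}'') is precisely your cycle, using the earlier observation that localisable semigroups satisfy the strong match-up conditions, Proposition \ref{match0} for the second arrow, and Proposition \ref{bandleftmatch} together with its dual plus the remark that localisable equals left plus right semi-localisable for the third. Your explicit bookkeeping of how the two one-sided semi-localisability packages combine is exactly the detail the paper leaves implicit.
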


There are natural examples of non-localisable, left semi-localisable semigroups, but these have additional properties and are considered next.  

\subsection{The D-ample property and left restriction semigroups with range}  \label{subsec:Dample}

The D-ample law has been considered by many authors in a range of settings, but originated in the work of Fountain \cite{foun}, where it was stated in a way equivalent to that given here if the left congruence condition is assumed.

\begin{pro}  \label{Dampleband}
If $S$ is a D-ample precat-semigroup, then $D(S)$ is a band. 
\end{pro}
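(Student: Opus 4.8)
The plan is to show that $D(S)$ is closed under the semigroup multiplication; since every element of $D(S)$ is already idempotent (recall $D(S)\subseteq E(S)$, as noted above), closure is all that is needed to make $D(S)$ a band. The only tool available beyond the basic precat-semigroup laws is the D-ample law $xD(y)=D(xy)x$, and the essential idea is to apply it \emph{twice}.

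Before starting I would record the facts about projections that I will use repeatedly: for any $e\in D(S)$ we have $D(e)=R(e)=e$ and $e^2=e$. Now fix $e,f\in D(S)$. Applying the D-ample law with $x=e$ and $y=f$, and using $D(f)=f$, gives $ef=eD(f)=D(ef)\,e$. Setting $g=D(ef)\in D(S)$, this reads $ef=ge$.

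The second, key step is to feed this back into the D-ample law. Taking $x=g$ and $y=e$, and using $D(e)=e$, I obtain $ge=gD(e)=D(ge)\,g$. But $ge$ and $ef$ are the same element, so $D(ge)=D(ef)=g$, whence $ge=g\cdot g=g$ because $g$ is idempotent. Combining the two steps, $ef=ge=g=D(ef)\in D(S)$, which establishes the desired closure and hence shows $D(S)$ is a band.

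I do not expect any genuine obstacle here: the whole content is the double application of the D-ample identity. The first application exposes $ef$ as $D(ef)\,e$, and the second—crucially exploiting that the projection $g=D(ef)$ is idempotent—forces $D(ef)\,e$ to collapse back to $D(ef)$ itself. The only thing to ``see'' is that second application; once $ge=g$ is in hand the conclusion is immediate.
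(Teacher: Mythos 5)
Your proof is correct and is essentially the paper's own argument: the paper's one-line chain $ef=eD(f)=D(ef)e=D(ef)D(e)=D(D(ef)e)D(ef)=\cdots=D(ef)$ is exactly your two applications of the D-ample law (first with $(x,y)=(e,f)$, then with $(x,y)=(D(ef),e)$) followed by the idempotency of $D(ef)$. Introducing the abbreviation $g=D(ef)$ only makes the same computation easier to read.
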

\begin{proof}
For all $e,f\in D(S)$, $ef=eD(f)=D(ef)e=D(ef)D(e)=D(D(ef)e)D(ef)=D(eD(f))D(ef)=D(ef)D(ef)=D(ef)$. 
\end{proof}

Note that the above proof only requires the precat-semigroup properties of $D$, not $R$.

In \cite{constell}, a {\em left restriction semigroup} is defined to be a unary semigroup with unary operation $D$ satisfying the following laws:
\bi
\item $D(x)x=x$;
\item $D(x)D(y)=D(y)D(x)$;
\item $D(D(x)y)=D(x)D(y)$;
\item $xD(y)=D(xy)x$.
\ei
Equivalent formulations have also been used. Other laws follow easily, such as the laws $D(xy)D(x)=D(xy)$ and $D(xy)=D(xD(y))$, and the fact that $D(S)$ is a band (from the proof of Proposition \ref{Dampleband}) and hence a semilattice using the second law above. 

Recall that the precat-semigroup $(S,\cdot,D,R)$ is a left restriction semigroup with range if $(S,\cdot,D)$ is a left restriction semigroup and $R$ satisfies $R(xy)R(y)=R(xy)$.
The semigroup $PT(X)$ of partial functions on $X$ is a left restriction semigroup with range, but satisfies the right congruence condition and is therefore localisable.  A non-localisable example follows shortly.

Let $S$ be a left restriction semigroup with range.  Under the usual semilattice order on $D(S)$ given by $e\leq f$ when $e=ef$, it can be shown that $R(s)$ is the smallest $e\in D(S)$ such that $se=s$ (since if $se=s$ then $R(s)e=R(se)e=R(se)R(e)=R(se)$); we use this in the proof to follow.

\begin{thm}  \label{leftrestll}
A left restriction semigroup with range is nothing but a left semi-localisable semigroup which is D-ample and in which the band $D(S)$ is a semilattice.
\end{thm}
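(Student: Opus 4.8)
The plan is to prove the two inclusions separately, using the description of $R$ as a minimal idempotent fixer as the common engine. In a left restriction semigroup with range the range law $R(xy)R(y)=R(xy)$ already makes $R(s)$ the least projection $e$ with $se=s$ (as recorded immediately before the statement), and the same fact is available on the other side: if $S$ satisfies the right weak congruence condition and $se=s$ with $e\in D(S)$, then $R(s)=R(se)=R(R(sD(e))e)=R(R(se)e)$, and since $R(se)e$ is a product of projections it is $R$-fixed, so $R(s)=R(R(se)e)=R(se)e=R(s)e$, i.e.\ $R(s)\leq e$; with $sR(s)=s$ from (CS2) this gives minimality.

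For the implication from ``left semi-localisable, D-ample, $D(S)$ a semilattice'' to ``left restriction semigroup with range'', the four left restriction laws are routine: $D(x)x=x$ is (CS1), commutativity of projections is the semilattice hypothesis, $D(D(x)y)=D(D(x)D(y))=D(x)D(y)$ follows from the left congruence condition together with $D(x)D(y)\in D(S)$, and $xD(y)=D(xy)x$ is exactly D-ampleness. The range law is then read off from minimality: $xyR(y)=x(yR(y))=xy$ exhibits $R(y)$ as a right-fixer of $xy$, so $R(xy)\leq R(y)$, which is $R(xy)R(y)=R(xy)$.

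The substantive implication is the converse. For a left restriction semigroup with range, D-ampleness is built in, $D(S)$ is the usual band (hence, with commuting projections, a semilattice), and the left congruence condition $D(xy)=D(xD(y))$ is standard, so everything reduces to the right weak congruence condition $R(xy)=R(R(xD(y))y)$. I would obtain it from the cat-semigroup property $R(a)=D(b)\Rightarrow R(ab)=R(b)$ applied to $a=xD(y)$ and $b=R(xD(y))y$: here $ab=xD(y)R(xD(y))y=xD(y)y=xy$ by (CS2) and (CS1), while $R(xD(y))\leq D(y)$ (by minimality, since $D(y)$ right-fixes $xD(y)$) gives, via left congruence, $D(b)=D(R(xD(y))D(y))=R(xD(y))=R(a)$, so that $R(xy)=R(ab)=R(b)=R(R(xD(y))y)$ as required. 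Left congruence and the band property then make $S$ left semi-localisable (equivalently, one may route through Proposition \ref{bandleftmatch}, the left match-up condition holding because $D(R(xD(y))y)=R(xD(y))$).

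The one genuinely delicate point, and the step I expect to be the main obstacle, is the cat-semigroup property $R(a)=D(b)\Rightarrow R(ab)=R(b)$: the range law only supplies $R(ab)\leq R(b)$, and the reverse inequality is precisely where the one-sided (domain-only) ampleness must be made to govern the range side. My route is to establish $D(bR(ab))=D(b)$ first; the inequality $D(bR(ab))\leq D(b)$ is formal, and for the reverse I apply D-ampleness twice to compute $aD(bR(ab))=D(abR(ab))a=D(ab)a=aD(b)=aR(a)=a$, so that $D(bR(ab))$ right-fixes $a$ and minimality forces $R(a)=D(b)\leq D(bR(ab))$. The decisive move is then to apply D-ampleness to the \emph{right} factor $b$ with the projection $R(ab)$: since $D(R(ab))=R(ab)$ we get $bR(ab)=bD(R(ab))=D(bR(ab))b=D(b)b=b$, whence $R(b)\leq R(ab)$ and equality follows. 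Converting the range idempotent $R(ab)$ into the domain idempotent $D(R(ab))$ so that the D-ample law can act is what makes the argument work; the remaining verifications are bookkeeping.
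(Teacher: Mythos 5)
Your argument is correct, and it runs on the same engine as the paper's proof --- the characterisation of $R(s)$ as the least projection right-fixing $s$, together with repeated use of the D-ample law to convert right multiplication by a range projection into left multiplication by a domain projection --- but it is organised around a different intermediate target. The paper attacks the right weak congruence condition head-on by a sandwich: it first notes $R(st)\leq R(R(s)t)$ in general, then uses $sD(t)=D(st)s=D(stR(st))s=sD(tR(st))$ to get $R(sD(t))\leq D(tR(st))$ and hence $R(R(sD(t))t)\leq R(st)$, and combines the two inequalities. You instead isolate the (CS6)-type quasi-identity $R(a)=D(b)\Rightarrow R(ab)=R(b)$ as a standalone lemma, proved via $D(bR(ab))=D(b)$ and the observation that D-ampleness applied to the \emph{right} factor then gives $bR(ab)=D(bR(ab))b=b$; the right weak congruence condition follows by substituting $a=xD(y)$, $b=R(xD(y))y$ and checking $R(a)=D(b)$ and $ab=xy$. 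All steps check out, including your rederivation of the minimality of $R(s)$ from the right weak congruence condition and the band property, which is needed for the easy direction. Your route has the mild advantage of making explicit that a left restriction semigroup with range is a cat-semigroup, which the paper only obtains afterwards via Proposition \ref{fbv}; the paper's sandwich is marginally shorter. The one cosmetic slip is the closing parenthetical: once the right weak congruence condition is established, left semi-localisability is immediate from its definition, so the detour through Proposition \ref{bandleftmatch} and the left match-up condition is unnecessary (though harmless).
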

\begin{proof}
Suppose $S$ is a left restriction semigroup with range.  We must show $S$ is left semi-localisable.  But $D(S)$ is a band and the left congruence condition is satisfied, so it remains to show that the right weak congruence condition holds in $S$.

First note that for all $s,t\in S$, $st R(R(s)t)=s(R(s)t)R(R(s)t)=sR(s)t=st$, so $R(st)\leq R(R(s)t)$ (since for all $x\in S$, $R(x)$ is the smallest $e\in D(S)$ for which $xe=x$). 

Now for $s,t\in S$, we have that $sD(t)=D(st)s=D(stR(st))s=sD(tR(st))$, and so $sD(t)=sD(t)D(tR(st))$ (since $D(tR(st))\leq D(t)$), from which it follows that $R(sD(t))\leq D(tR(st))$.  Hence,
$$R(sD(t))tR(st)=R(sD(t))D(tR(st))t=R(sD(t))t,$$
so again we must have that $R(R(sD(t))t)\leq R(st)=R((sD(t))t)\leq R(R(sD(t))t)$ from what was shown initially, so all are equal and so $R(st)=R(R(sD(t))t)$.

Conversely, suppose  $S$ is left semi-localisable, D-ample, and the band $D(S)$ is a semilattice.  Then it is a left restriction semigroup with respect to multiplication and $D$, the third law following from the left congruence condition and the fact that $D(S)$ is a band.  Finally, for all $x,y\in S$, we have that
\bea
R(xy)&=&R((xy)R(y))\\
&=&R(R(xyD(R(y)))R(y))\mbox{ by the right weak congruence condition}\\
&=&R(R(xy)R(y))R(y))\\
&=&R(R(xy)R(y))\\
&=&R(xy)R(y)\mbox{ since $D(S)$ is a band,}
\eea
as required.
\end{proof}

A natural example of a left restriction semigroup with range that is not localisable is the semigroup $Rel^d(X)$ of binary relations on the set $X$, equipped with domain, range and {\em demonic composition} $\circledast$, defined as follows: for all $\rho,\tau\in Rel^d(X)$, 
$$\rho\circledast\tau=\{(x,y)\in \rho\tau\mid \mbox{ for all } z\in X, (x,z)\in \rho\Rightarrow z\in \dom(\tau)\},$$
where $\rho\tau$ denotes the usual ``angelic" composition of $\rho,\tau\in Rel(X)$.  As noted by various authors, $\circledast$ is associative; indeed $(Rel(X),\circledast,D)$ is a left restriction semigroup (for example, see \cite{DAD}), and hence satisfies the left congruence condition, but it does not satisfy the right congruence condition in general as easy examples show.  Roughly speaking, angelic composition is important when modelling partial correctness of ``non-deterministic" programs, and demonic for total correctness (which acounts for program termination); see \cite{DAD} and \cite{hms} for more details.

\section{Categories with biaction and the match-up conditions}

When a cat-semigroup is equipped with the left and right actions given by $e|s=es$ and $s|e=se$ for all $e\in D(S)$ and $s\in S$, as defined earlier, not all of the transcription category laws will be satisfied, but some always are.

\begin{dfn}
A category equipped with a left and right action of $C^0$ on $C$, denoted by $e|s$ and $s|e$ for all $s\in C$ and $e\in C^0$, is said to be a {\em category with biaction} if it satisfies (TC1), (TC2) and (TC6) in the definition of a transcription category.  
\end{dfn}

Transcription categories arise as the categories determined by localisable semigroups.  More generally, we have the following easily checked observation.

\begin{prodef}
The category ${\mc C}(S)$ determined by the cat-semigroup $S$ is a category with biaction if we define $e|s=es$ and $s|e=se$ for all $s\in C$ and $e\in C^0$; we call this the {\em category with biaction determined by $S$}.
\end{prodef}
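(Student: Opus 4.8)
The plan is to verify that the three required laws --- (TC1), (TC2), and (TC6) --- hold in $\mc{C}(S)$ once we set $e|s=es$ and $s|e=se$, using only the precat-semigroup and cat-semigroup axioms. First I would confirm that the biaction is well-defined as a genuine left and right action of $C^0=D(S)$ on $C=S$, but this is immediate: the actions are just restrictions of the ambient semigroup multiplication, so associativity of the actions (the content needed for the word ``action'') is inherited from associativity of $S$; in particular the two laws in (TC3), were they required, would follow freely, and here we need only the three specified laws.

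For (TC1), I would observe that when $e,f\in C^0=D(S)$, the expression $e|f$ interpreted as a left action is $ef$ and interpreted as a right action is also $ef$; since both equal the semigroup product $ef$, the interpretations agree and there is genuinely nothing to check beyond unwinding the notation. For (TC2), I would use the defining precat-semigroup laws (CS1) and (CS2): for $a\in C$, $D(a)|a=D(a)a=a$ by (CS1), and $a|R(a)=aR(a)=a$ by (CS2), which is exactly what (TC2) asserts. For (TC6), given $e,f\in C^0$ and $a\in C$, I would compute $(e|a)|f=(ea)f$ and $e|(a|f)=e(af)$, and these coincide by associativity in $S$. Thus all three laws reduce to either associativity or the basic precat-semigroup identities, so there is no real obstacle.

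The one point requiring a small amount of care --- and the nearest thing to a genuine step --- is confirming that the underlying structure $\mc{C}(S)$ really is a category, so that talking about ``a category with biaction'' is legitimate. This is precisely where the cat-semigroup hypothesis (rather than mere precat-semigroup) is used: as noted immediately after Definition \ref{restr}, $\mc{C}(S)$ is a category if and only if $S$ is a cat-semigroup, the content being that (CS6) guarantees the closure law (C4) for $D$ and $R$ under the restricted product $\circ$, while the remaining category axioms (C1)--(C3) and (C5) follow from (CS1)--(CS5) and associativity in $S$. Once this is in hand, the biaction laws verified above show $\mc{C}(S)$ is a category with biaction, completing the argument. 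The proof is therefore entirely routine, which is consistent with its being asserted as an ``easily checked observation''.
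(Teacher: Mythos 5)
Your verification is correct and matches what the paper intends: the statement is presented as an ``easily checked observation'' with no written proof, and the routine check of (TC1), (TC2) via (CS1)--(CS2), and (TC6) via associativity of $S$, together with the observation that the cat-semigroup condition (CS6) is exactly what makes ${\mc C}(S)$ a category in the first place, is precisely that check. (Your aside that (TC3) ``would follow freely'' is slightly off --- for a general cat-semigroup $e|f=ef$ need not lie in $C^0=D(S)$, so (TC3) need not even parse --- but since (TC3) is not part of the definition of a category with biaction this does not affect your argument.)
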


The law (TC6) allows us to write ``$e|s|f$" without ambiguity, where $e,f\in C^0$ and $s\in C$, and we sometimes do this in what follows.  

If $S$ is a cat-semigroup for which the left match-up condition holds then the semigroup product in the cat-semigroup $S$ may be retrieved from the category with biaction structure of ${\mc C}(S)$ by virtue of the fact that $st=(s|D(t))\circ (R(s|D(t))|t)$.  But in general, it turns out that even if both congruence conditions hold in the cat-semigroup $S$, this is not enough to ensure that the semigroup structure of $S$ is captured by ${\mc C}(S)$.

\begin{eg}
Two distinct cat-semigroups determining the same category with biaction. 
\end{eg}
\vspace{-5pt}
Let $X=\{1,2,3,4\}$, and in $PT(X)$, set 
$$e=\{(1,1),(2,2),(3,3)\}, f=\{(2,2),(3,3),(4,4)\}, b=\{(2,2),(3,3)\}.$$
First, let $S=\{e,f,b,0\}$ where $0=\emptyset$, the empty function.  Then $S$ is a subsemigroup of $PT(X)$ with multiplication table as follows:

$$	\begin{array}{c|cccc}
	\cdot&0&e&f&b\\
	\hline
	0&0&0&0&0\\
	e&0&e&b&b\\
	f&0&b&f&b\\
	b&0&b&b&b
	\end{array}.
$$

Now instead let $T=\{e,f,b,0\}$ where this time $0=\{(2,3),(3,2)\}$ with $e,f,b$ unchanged.  This time, the multiplication table is easily seen to be as follows:

$$	\begin{array}{c|cccc}
	\cdot&0&e&f&b\\
	\hline
	0&b&0&0&0\\
	e&0&e&b&b\\
	f&0&b&f&b\\
	b&0&b&b&b
	\end{array}.
$$
The only difference is that $0^2=0$ in $S$, whereas $0^2=b$ in $T$.  

We can define precat-semigroup structure on both $S$ and $T$ by setting $D(S)=D(T)=\{e,f\}$, with $D(0)=D(b)=e$ and $R(0)=R(b)=f$ (and $D(e)=R(e)=e, D(f)=R(f)=f$).  This is easily verified to satisfy the laws.  Considering the law $D(xy)=D(xD(y))$ in either $S$ or $T$, if either side evaluates to $f$, this forces $x=y=f$ and so both sides equal $f$.  A dual argument applies to the law $R(xy)=R(R(x)y)$.  Hence both $S$ and $T$ satisfy the congruence conditions (and so are cat-semigroups).

Note that all products in ${\mc C}(S)$ and ${\mc C}(T)$ agree since $0\circ 0$ does not exist in either, with every other product existing in one exactly when it exists in the other (and all are equal since they are equal in the two semigroups).  Moreover the two biactions agree since they arise from semigroup products between elements at least one of which is not $0$.  So we have that ${\mc C}(S)={\mc C}(T)$ as categories with biaction, yet $S$ and $T$ are not even isomorphic as semigroups.

As we shall see, there are cat-semigroups of interest in which (TC4) fails in the category with biaction they determine, but in which one of (TC4a) and (TC4b) holds.  

\begin{pro}  \label{TC4}
The category with biaction ${\mc C}(S)$ determined by the cat-semigroup $S$ satisfies (TC4) if and only if $S$ satisfies the match-up conditions.
\end{pro}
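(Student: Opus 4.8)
My plan is to first strip each half of (TC4) down to a bare existence condition, since the accompanying value equalities are automatic in $\mc C(S)$. With the biaction $e|s=es$, $s|e=se$, suppose $a\circ b$ exists (so $R(a)=D(b)$ and $a\circ b=ab$) and let $e\in C^0=D(S)$. The value claimed in (TC4a) is $e(ab)=(ea)(R(ea)b)$, and since $ea\,R(ea)=ea$ by (CS2) the right side collapses to $eab$; dually the value in (TC4b) collapses via (CS1). Hence (TC4a) holds in $\mc C(S)$ exactly when $R(ea)=D(R(ea)b)$ whenever $R(a)=D(b)$, and (TC4b) holds exactly when $R(aD(be))=D(be)$ whenever $R(a)=D(b)$. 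I will use throughout the characterisations recorded just before Proposition~\ref{MUC2}: the left match-up condition says precisely that $s|D(t)\circ R(s|D(t))|t$ exists, i.e.\ $R(sD(t))=D(R(sD(t))t)$, and the right match-up condition that $s|D(R(s)|t)\circ R(s)|t$ exists, i.e.\ $R(sD(R(s)t))=D(R(s)t)$.

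For the forward direction (match-up conditions $\Rightarrow$ (TC4)) I treat the two existence conditions separately. For (TC4a), given $R(a)=D(b)$ and $e\in D(S)$, apply the left match-up condition $R(xD(y))=D(R(xD(y))y)$ with $x=ea$, $y=b$: because $D(b)=R(a)$ and $ea\,R(a)=ea$ by (CS2), the term $ea\,D(b)$ collapses to $ea$, leaving exactly $R(ea)=D(R(ea)b)$. Dually, for (TC4b) apply the right match-up condition in the form $R(sD(R(s)t))=D(R(s)t)$ with $s=a$, $t=be$; since $R(a)=D(b)$ gives $R(a)\,be=D(b)be=be$ by (CS1), this collapses to $R(aD(be))=D(be)$. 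Once the existence conditions are isolated this half is routine, and it uses only the left (resp.\ right) match-up condition.

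The converse, deducing the match-up conditions from (TC4), is where I expect the real work to lie. The naive attempt to prove the left match-up condition at $(s,t)$ — take the pair $a=sD(t)$, $b=R(sD(t))t$ with $e=D(s)$ — is circular, because the existence of $a\circ b$ is itself the left match-up condition, and if instead $b$ is any element with $D(b)=R(sD(t))$ then $R(sD(t))b=b$ and the conclusion $R(sD(t))=D(R(sD(t))b)$ degenerates to $R(sD(t))=D(b)$, the hypothesis. What one does get cleanly is a projection-restricted form: instantiating (TC4a) at the genuinely composable pair $(D(t),t)$ (note $R(D(t))=D(t)$) gives $R(eD(t))=D(R(eD(t))t)$ for every projection $e$, and dually (TC4b) at $(s,R(s))$ gives $R(sD(R(s)e))=D(R(s)e)$ for every projection $e$.

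The crux is therefore to promote these projection-indexed identities to an arbitrary element in the leading (resp.\ trailing) position, and I do not expect this to follow from (TC4a) or (TC4b) in isolation: the content of (TC4a) is an identity about left-restrictions $ea$ of elements $a$, whereas $sD(t)$ is a right-restriction, so realising $sD(t)$ as $e|a$ for a composable pair $(a,b)$ requires a witness $a$ with $R(a)=D(t)$ extending $sD(t)$, which need not exist in an abstract cat-semigroup. My plan is thus to run the two conditions in tandem, using (TC4b) to manufacture the composable witnesses that drive (TC4a) (and conversely), together with (CS6), in order to show that $R(sD(t))=R(R(s)D(t))$-type reductions hold and hence recover the full laws $R(sD(t))=D(R(sD(t))t)$ and $R(sD(R(s)t))=D(R(s)t)$. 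Verifying that (TC4a) and (TC4b) together already force both match-up conditions, even though neither does alone, is the step I expect to demand the most care; with it in hand, Proposition~\ref{match0} and Corollary~\ref{matchcong} confirm that the resulting conditions are exactly the match-up conditions.
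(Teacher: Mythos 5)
Your proposal is correct and follows essentially the same route as the paper's proof: the forward direction is the paper's argument verbatim (collapse $eaD(b)$ to $ea$ via $R(a)=D(b)$ and (CS2), then invoke the left match-up law, and dually), and your converse plan --- the projection-restricted instances of (TC4a) at the pair $(D(t),t)$ and of (TC4b) at $(s,R(s))$, promoted by running the two halves in tandem through (CS6) --- is exactly what the paper does. The one step you leave unexecuted goes through precisely as you anticipate: (CS6) applied to the composable pair $yD(R(y)e)\circ R(y)e$ furnished by your second projection-restricted identity gives $R(ye)=R(yD(R(y)e)R(y)e)=R(R(y)e)$, and dually the pair $eD(y)\circ R(eD(y))y$ gives $D(ey)=D(eD(y))$; substituting $e=R(x)$ into the first identity (resp.\ $e=D(x)$ into the second) and applying these reductions yields the full left (resp.\ right) match-up condition.
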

\begin{proof}
Suppose $S$ satisfies the match-up conditions.  In particular, it satisfies the left match-up condition, and so if $R(s)=D(t)$ and $e\in D(S)$, then 
$$D(R(es)t)=D(R(esR(s))t)=D(R(esD(t))t)=R(esD(t))=R(esR(s))=R(es).$$ 
So $(e|s)\circ (R(e|s)|t)$ exists in ${\mc C}(S)$, and obviously equals $e|(s\circ t)$, so ${\mc C}(S)$ satisfies (TC4a).  By dualising, we infer that the right match-up condition implies (TC4b), and hence that the match-up conditions together imply (TC4).

Conversely, suppose ${\mc C}(S)$ satisfies (TC4); hence, for all $s,t\in S$ for which $R(s)=D(t)$ and for all $e\in D(S)$, $R(es)=D(R(es)t)$ and dually, $D(te)=R(sD(te))$.  But $R(D(t))=D(t)$ for any $t\in S$, so for all $y\in S$ and $e\in D(S)$,
\ben[ (1)]
\item $D(R(eD(y))y)=R(eD(y))$, and dually,
\item $R(yD(R(y)e))=D(R(y)e)$.
\een  
By the first cat-semigroup law applied to (1) above, 
\ben[ (3)]
\item $D(eD(y))=D(eD(y)R(eD(y))y)=D(eD(y)y)=D(ey)$.
\een  
In (2) above, let $e=D(x)$ to give $R(yD(R(y)D(x)))=D(R(y)D(x))$, and so from (3), $R(yD(R(y)x))=D(R(y)x)$, which is the right match-up condition.  We dualise to obtain the result.
\end{proof}

It follows from the above proof that the left match-up condition holding on the cat-semigroup $S$ implies (TC4a) holds on ${\mc C}(S)$, but the converse fails.

\begin{pro} \label{TC4a}
The cat-semigroup $S$ satisfies the left match-up condition if and only if it satisfies $R(st)=D(R(st)R(t))$ and ${\mc C}(S)$ satisfies (TC4a).  
\end{pro}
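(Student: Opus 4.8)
The statement is a biconditional between a single law and a conjunction of two conditions, so the plan is to treat the two implications separately, with essentially all of the work living in the converse.

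For the forward implication, suppose the cat-semigroup $S$ satisfies the left match-up condition. Both conclusions are then already in hand from earlier results. The law $R(st)=D(R(st)R(t))$ is immediate from Proposition \ref{MUC2}, which identifies the cat-semigroups satisfying the left match-up condition as exactly the precat-semigroups satisfying the left congruence and right weak congruence conditions together with this very law. That ${\mc C}(S)$ satisfies (TC4a) is precisely the computation in the first paragraph of the proof of Proposition \ref{TC4}: for $R(s)=D(t)$ and $e\in D(S)$ one uses the left match-up condition to verify $D(R(es)t)=R(es)$, so that $(e|s)\circ R(e|s)|t$ exists and equals $e|(s\circ t)$. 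So I would dispatch this direction by citation, performing no new calculation.

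The substance is the converse, where I assume $S$ is a cat-semigroup satisfying $R(st)=D(R(st)R(t))$ and such that ${\mc C}(S)$ satisfies (TC4a), and I must derive $R(xD(y))=D(R(xD(y))y)$ for all $x,y$. First I would translate (TC4a) into semigroup terms: it asserts $R(es)=D(R(es)t)$ whenever $R(s)=D(t)$ and $e\in D(S)$. Specialising to $s=D(y)$ and $t=y$, which is legitimate because $R(D(y))=D(y)$ by (CS4), yields the projection-level instance $R(eD(y))=D(R(eD(y))y)$ for every $e\in D(S)$. From this and the cat-semigroup law (CS6) I would recover the projection-level left congruence condition $D(eD(y))=D(ey)$, by the argument labelled (3) in the proof of Proposition \ref{TC4}: the restricted product $(eD(y))\circ(R(eD(y))y)$ exists by the instance just derived, so (CS6) gives $D(eD(y))=D(eD(y)\,R(eD(y))\,y)=D(eD(y)y)=D(ey)$ after simplifying with (CS1) and (CS2).

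The last step is the bootstrap from projections to arbitrary $x$, and this is where I expect the only real difficulty to lie. Applying the assumed law $R(st)=D(R(st)R(t))$ with $s=x$ and $t=D(y)$, and using $R(D(y))=D(y)$, gives $R(xD(y))=D(R(xD(y))D(y))$. The element $e:=R(xD(y))$ is a projection, so the projection-level left congruence condition applied to this $e$ rewrites $D(R(xD(y))D(y))$ as $D(R(xD(y))y)$, which is exactly the left match-up condition. The conceptual point to stress is that (TC4a) by itself only constrains left multiplication by projections, so it delivers the left match-up condition only when the leading factor is already a projection; the extra law $R(st)=D(R(st)R(t))$ is precisely the tool that lets one replace a general leading factor $x$ by the projection $R(xD(y))$ without disturbing the range, after which the projection-level information suffices. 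In writing this up I would take care that nothing smuggles in the right-hand (TC4b) or the dual laws, since the whole force of the proposition is that (TC4a) is a genuinely one-sided hypothesis.
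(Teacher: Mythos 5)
Your proof is correct and follows essentially the same route as the paper's: the forward direction by citing Proposition \ref{MUC2} and the first part of the proof of Proposition \ref{TC4}, and the converse by specialising (TC4a) to $s=D(y)$, $t=y$ to get the projection-level identity, extracting $D(eD(y))=D(ey)$ via (CS6), and then using the law $R(st)=D(R(st)R(t))$ with $t=D(y)$ to pass from the projection $R(xD(y))$ back to the general left match-up condition. The only difference is cosmetic (you run the final chain of equalities in the opposite direction from the paper).
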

\begin{proof}
If $S$ satisfies the left match-up condition, then the argument given in the first part of the proof of Proposition \ref{TC4} establishes that ${\mc C}(S)$ satisfies (TC4a).    

Conversely, suppose $S$ satisfies the law $R(st)=D(R(st)R(t))$ and ${\mc C}(S)$ satisfies (TC4a). Now for $x,y\in S$, because $D(y)=R(D(y))$, we get $R(eD(y))=D(R(eD(y))y)$ for any $e\in D(S)$ upon using (TC4a), so by the first cat-semigroup law, $$D(eD(y))=D(eD(y)R(eD(y))y)=D(ey).$$  So letting $e=R(xD(y))$, we get $$D(R(xD(y))y)=D(R(xD(y))D(y))=D(R(xD(y))R(D(y)))=D(R(xD(y)))=R(xD(y)).$$  
Hence the left match-up condition is satisfied.
\end{proof}

Let $S=\{0,e,1\}$ be the three-element semilattice with zero $0$ and identity $1$, and define $D(0)=e$ with $R(0)=1$, and $D(x)=x$ for $x\neq 0$.  It is tedious but routine to check that $S$ is a cat-semigroup with (TC4a) holding on ${\mc C}(S)$, and $R(0e)=R(0)=1$, yet $D(R(0e)R(e))=D(1e)=D(e)=e$, so the additional condition cannot be dispensed with in Proposition \ref{TC4a}.

If $S$ is a cat-semigroup satisfying the strong match-up conditions, then (TC3) does not in general make sense in ${\mc C}(S)$ since $D(S)$ may not be a band, and (TC5) fails in general.  However, (TC4) does hold, by Propositions \ref{TC4a} and \ref{match0}.  

For any class of precat-semigroups, the natural morphism notion is of course semigroup homomorphism that respects $D$ and $R$.  For categories with biaction, the natural notion is as follows (generalising \cite{fitzkin}).

\begin{dfn}  \label{funcdef}
A {\em biaction functor} is a functor $\psi:C_1\rightarrow C_2$ between categories with biaction that satisfies, for all $e\in D(C_1)$ and $s\in C_1$, $\psi(e|s)=\psi(e)|\psi(s)$ and $\psi(s|e)=\psi(s)|\psi(e)$.
\end{dfn}

\section{The ESN theorems}  \label{sec:general}

In this section, our most general ESN theorems are obtained.  We identify the categories with biaction that arise from cat-semigroups equipped with (in order of decreasing generality)
\ben[(i)] 
\item the left match-up condition only,
\item both match-up conditions, and
\item the strong match-up conditions.  
\een
Each of these classes of cat-semigroups is a variety of precat-semigroups, and for each we obtain an isomorphism between the class of precat-semigroups (viewed as a category) and a particular class of categories with biaction (equipped with biaction functors and hence also viewed as a category).  

The description of the categories with biaction arising from Case (i) given in Section \ref{sec:general} includes the condition that the relevant pseudoproduct be associative.   Although this is a first-order condition expressible solely in the language of categories with biaction, it is somewhat unsatisfactory.  Consequently, the special case of left semi-localisable semigroups is considered in Subsection \ref{subsec:special}.  The corresponding categories with biaction are simply transcription categories with some laws missing or strictly weakened.  

For cat-semigroups satisfying both match-up conditions as in Case (ii), we shall show that the corresponding class of categories with biaction has a description in terms of (TC4) and a variant of it -- associativity of the pseudoproduct follows even in the general case.  Case (iii) then builds on this case.

If $S$ is any of the above three types of cat-semigroups, at least the left match-up condition holds, and so it is clear that the category with biaction ${\mc C}(S)$ determined by $S$ satisfies the following property:
\ben [(LMU)]
\item $R(s|D(t))=D(R(s|D(t))|t)$ for all $s,t$.
\een

There is an obvious dual condition:
\ben [(RMU)]
\item $D(R(s)|t)=R(s|D(R(s)|t))$ for all $s,t$.
\een

\begin{dfn}
If a category with biaction $C$ satisfies (LMU) then we define the {\em left pseudoproduct} $\otimes_l$ on $C$ by setting, for all $s,t\in C$, 
$$s\otimes_l t=s|D(t)\circ R(s|D(t))|t,$$ 
and we let ${\mc S}(C)$ be the algebra $(S,\otimes_l,D,R)$, the {\em left extension of $S$}. The {\em right pseudoproduct} $\otimes_r$ is defined dually, if $C$ satisfies (RMU).
\end{dfn}

Clearly, if $S$ is a cat-semigroup satisfying the left match-up condition, then $\otimes_l$ in ${\mc C}(S)$ agrees with the semigroup product on $S$, and dually for the right match-up condition and $\otimes_r$.

\subsection{Categories and the left/right match-up condition}  \label{subsec:leftext}

\begin{dfn}  \label{leftextensible}
Let $C$ be a category with biaction.  We say it satisfies condition (TC4L) if it satisfies (TC4a), and
\ben [(TC4b$^\prime$)]
\item for all $a,b\in C$, if $a\circ b$ exists then for all $e\in C^0$, 
so does $(a|D(b|e))\circ R(a|D(b|e))|(b|e)$, and it equals $(a\circ b)|e$.  
\een
We define condition (TC4R) dually in terms of (TC4a$^\prime$) (defined dually to (TC4b$^\prime$) in the obvious way) and (TC4b).
\end{dfn}

\begin{pro}  \label{catle}
If $S$ is a cat-semigroup satisfying the left match-up condition, then ${\mc C}(S)$ satisfies (LMU) and (TC4L), and $\otimes_l$ coincides with the product on $S$.
\end{pro}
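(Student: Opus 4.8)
The plan is to verify the three asserted conclusions in turn, exploiting the fact that the semigroup product in $S$ and the partial product $\circ$ in ${\mc C}(S)$ are connected by $s\circ t=st$ precisely when $R(s)=D(t)$, and that the biaction is just multiplication: $e|s=es$ and $s|e=se$.

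First I would establish (LMU). This is essentially a reformulation of the left match-up condition. Since the left match-up condition reads $R(sD(t))=D(R(sD(t))t)$ for all $s,t\in S$, and since $s|D(t)=sD(t)$ and $R(s|D(t))|t=R(sD(t))t$ under the biaction, the statement (LMU), namely $R(s|D(t))=D(R(s|D(t))|t)$, is literally the left match-up condition rewritten in biaction notation. So this step should be immediate once the notation is unwound. Having (LMU), the left pseudoproduct $\otimes_l$ is defined on ${\mc C}(S)$.

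Next I would show $\otimes_l$ coincides with the semigroup product. By definition $s\otimes_l t=s|D(t)\circ R(s|D(t))|t$; I must check the product $\circ$ here actually exists, i.e. that $R(s|D(t))=D(R(s|D(t))|t)$, which is exactly (LMU) just proved. Once it exists, $s\otimes_l t = (sD(t))(R(sD(t))t)$, and the identity $st=(sD(t))(R(sD(t))t)$ holds in any precat-semigroup (this is recorded in the excerpt just after Proposition \ref{match0}, using $D(t)R(sD(t))$ absorbing appropriately). So $s\otimes_l t=st$, as the excerpt already remarks for cat-semigroups satisfying the left match-up condition.

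Finally I would verify (TC4L), which splits into (TC4a) and (TC4b$^\prime$). For (TC4a), I can simply invoke Proposition \ref{TC4a} (or the first part of the proof of Proposition \ref{TC4}), which already shows that the left match-up condition on $S$ forces (TC4a) on ${\mc C}(S)$; the computation there uses that $R(es)=D(R(es)t)$ whenever $R(s)=D(t)$ and $e\in D(S)$. The substantive work is (TC4b$^\prime$): given $a\circ b$ exists (so $R(a)=D(b)$) and $e\in C^0$, I must show $(a|D(b|e))\circ R(a|D(b|e))|(b|e)$ exists and equals $(a\circ b)|e=(ab)e$. Unwinding the biaction, the claimed product is $(aD(be))(R(aD(be))be)$, and by the precat-semigroup identity $st=(sD(t))(R(sD(t))t)$ applied with the pair $(a,be)$ this equals $a(be)=(ab)e$, giving the value; existence of the $\circ$ here is again an instance of (LMU) applied to the pair $(a,be)$. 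The main obstacle — and the only place care is needed — is confirming that (TC4b$^\prime$), unlike the symmetric (TC4b), is available from the \emph{left} match-up condition alone: here the right-hand factor $b|e=be$ carries a right-multiplication by $e$ into the product, and the identity used is the left-hand decomposition $st=(sD(t))(R(sD(t))t)$, which holds in every precat-semigroup without invoking any right-sided condition. I would check this decomposition does not secretly require the right match-up condition, confirming that (TC4b$^\prime$) is genuinely the one-sided-friendly variant of (TC4b) that the definition of (TC4L) intends.
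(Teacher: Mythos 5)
Your proposal is correct and follows essentially the same route as the paper's proof: (LMU) as a direct rewriting of the left match-up condition, (TC4a) via Proposition \ref{TC4a}, and (TC4b$^\prime$) by applying (LMU) and the precat-semigroup identity $st=(sD(t))(R(sD(t))t)$ to the pair $(a,be)$. Your closing worry is resolved exactly as you suspect, since that identity needs only (CS1) and (CS2).
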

\begin{proof}
From Proposition \ref{TC4a}, $C={\mc C}(S)$ is a category with biaction satisfying (TC4a), and (LMU) follows from the left match-up condition for $S$ upon applying the biaction definition.  For (TC4b$^\prime$), first note that if $a\circ b$ exists then $R(a)=D(b)$ and so for all $e\in C^0$, $R(a|D(b|e))=D(R(a|D(b|e))|(b|e))$ by (LMU), and so the category product $(a|D(b|e))\circ R(a|D(b|e))|(b|e)$ exists and must equal $aD(be)R(aD(be))be=aD(be)be=abe=(a\circ b)|e$.  That $s\otimes_l t=st$ is immediate.
\end{proof}

Next is a useful result for what follows.

\begin{lem}  \label{leftcongcat}
If $C$ is a category with biaction and (TC4a) holds, then for all $s\in C$ and $e\in C^0$, $D(e|s)=D(e|D(s))$.
\end{lem}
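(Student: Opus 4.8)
The plan is to derive the identity by applying (TC4a) to the canonical factorisation of $s$ through its own domain. First I would observe that in any category the restricted product $D(s)\circ s$ is defined: by (C2) we have $R(D(s))=D(s)$, which matches $D(s)$, so (C3) guarantees existence, and by (C1) this product equals $s$. The point is that $e|s$ can now be rewritten as $e$ acting on a genuine category product, which is precisely the situation (TC4a) is designed to handle.

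Next I would apply (TC4a) with $a=D(s)$ and $b=s$ (so that $a\circ b=D(s)\circ s=s$) and with the given $e\in C^0$. Since $D(s)\in C^0$, the expression $e|D(s)$ is a legitimate instance of the left action. The conclusion of (TC4a) then tells us that the product $(e|D(s))\circ R(e|D(s))|s$ exists and that
$$e|s = e|(D(s)\circ s) = (e|D(s))\circ R(e|D(s))|s.$$
Finally I would read off the domain using the category axiom (C4), which asserts that $D(x\circ y)=D(x)$ whenever $x\circ y$ is defined. Applying this with $x=e|D(s)$ and $y=R(e|D(s))|s$ gives
$$D(e|s)=D\bigl((e|D(s))\circ R(e|D(s))|s\bigr)=D(e|D(s)),$$
as required.

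I do not expect any real obstacle here: the argument is essentially a two-line deduction once one spots that $s$ should be written as $D(s)\circ s$ and recognises (TC4a) as the tool that transports the left action $e|(-)$ across this product while exposing $e|D(s)$ as the left factor. The only points needing a moment's care are verifying that all products involved are genuinely defined (covered by (C1)--(C3)) and that the element on which we act, $D(s)$, lies in $C^0$ so that the left action is applicable; both are immediate from the category axioms.
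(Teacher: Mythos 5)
Your proof is correct and is exactly the paper's argument: write $s=D(s)\circ s$, apply (TC4a) to get $e|s=(e|D(s))\circ R(e|D(s))|s$, and read off the domain via (C4). The paper compresses this into one line, but the substance is identical.
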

\begin{proof}
From (TC4a), $e|s=e|(D(s)\circ s)=e|D(s)\circ R(e|D(s))|s$, so $D(e|s)=D(e|D(s))$.
\end{proof}

Obviously, if a category with biaction satisfies (TC4L) and (TC4R), then it satisfies (TC4), but the converse holds also.

\begin{pro}  \label{extensible}
A category with biaction satisfies (TC4) if and only if it satisfies (TC4L) and (TC4R), and in this case it satisfies both (LMU) and (RMU) as well.
\end{pro}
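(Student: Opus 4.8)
The forward implication --- that (TC4L) and (TC4R) together yield (TC4) --- is immediate, since (TC4a) is literally one of the two clauses of (TC4L) and (TC4b) is one of the two clauses of (TC4R). All the content lies in the converse, so the plan is to assume (TC4), i.e. both (TC4a) and (TC4b), and to establish in turn (LMU), (RMU), (TC4b$^\prime$) and (TC4a$^\prime$); the last two, combined with (TC4a) and (TC4b) respectively, then give (TC4L) and (TC4R).

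The heart of the proof is deriving (LMU) from (TC4), and my plan is to reduce to the case where the first argument lies in $C^0$. First I would record a left--right dual of Lemma \ref{leftcongcat}: for all $s\in C$ and $e\in C^0$, $R(s|e)=R(R(s)|e)$. This follows by writing $s=s\circ R(s)$ using (C1), applying (TC4b) with $a=s$ and $b=R(s)$ to obtain $s|e = s|D(R(s)|e)\circ R(s)|e$, and then taking $R$ of both sides via (C4). Next I would establish (LMU) in the special case $s=g\in C^0$: since $D(t)\circ t=t$ by (C1), applying (TC4a) with $a=D(t)$, $b=t$ and left multiplier $g$ shows that the product $(g|D(t))\circ R(g|D(t))|t$ exists, and its existence is, by (C3), precisely the identity $R(g|D(t))=D(R(g|D(t))|t)$. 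Combining these two facts with $g=R(s)$ gives, for arbitrary $s,t\in C$,
\[
R(s|D(t))=R(R(s)|D(t))=D(R(R(s)|D(t))|t)=D(R(s|D(t))|t),
\]
which is exactly (LMU). The condition (RMU) then follows by the obvious left--right dual argument.

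Deriving (TC4b$^\prime$) from (TC4b) is routine and does not even require (LMU): assuming $a\circ b$ exists, (TC4b) asserts that the product $a|D(b|e)\circ b|e$ exists and equals $(a\circ b)|e$, so by (C3) we have $R(a|D(b|e))=D(b|e)$; hence by (TC2) the correction term collapses, $R(a|D(b|e))|(b|e)=D(b|e)|(b|e)=b|e$, so the product appearing in (TC4b$^\prime$) is literally the product appearing in (TC4b), whence it exists and equals $(a\circ b)|e$. Dually, (TC4a$^\prime$) follows from (TC4a). Thus (TC4a) with (TC4b$^\prime$) gives (TC4L), and (TC4a$^\prime$) with (TC4b) gives (TC4R), completing the equivalence, while (LMU) and (RMU) were obtained along the way.

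The step I expect to be the genuine obstacle is the derivation of (LMU), since it is the only place where an idea beyond unwinding definitions is needed: one must notice that the identity holds on the nose when the first argument is a projection (via (TC4a) applied to the factorisation $t=D(t)\circ t$), and that the dual of Lemma \ref{leftcongcat} allows an arbitrary $s$ to be replaced by the projection $R(s)$ without disturbing the relevant ranges. Everything else is bookkeeping: the ``if'' direction is a tautology, and (TC4b$^\prime$) and (TC4a$^\prime$) reduce to the single observation that $D(x)|x=x$ absorbs the extra restriction that distinguishes them from (TC4b) and (TC4a).
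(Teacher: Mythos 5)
Your proof is correct and follows essentially the same route as the paper: (LMU) is obtained exactly as there, by applying (TC4a) to $D(t)\circ t=t$ with left multiplier $R(s)$ and then using the dual of Lemma \ref{leftcongcat} (derived the same way, via (TC4b) applied to $s\circ R(s)=s$) to replace $R(s)$ by $s$. Your derivation of (TC4b$^\prime$) is a slight streamlining --- you read off $R(a|D(b|e))=D(b|e)$ directly from (C3) and the existence of the product asserted in (TC4b), whereas the paper routes this collapse of the correction term through (RMU) --- but the substance is identical.
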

\begin{proof}
Let $C$ be a category with biaction satisfying (TC4).  From (TC4a), for $s,t\in C$, we have that $R(s)|(D(t)\circ t)=R(s)|D(t)\circ R(R(s)|D(t))|t$ exists and so $R(R(s)|D(t))=D(R(R(s)|D(t))|t)$, so by the dual of Lemma \ref{leftcongcat}, $R(s|D(t))=D(R(s|D(t))|t)$, and so  (LMU) holds.  Dually, (RMU) holds also.

It remains to show that (TC4b$^\prime$) holds.  But if $R(a)=D(b)$ and $e\in C^0$, using (RMU) we have
$$R(a|D(R(a)|(b|e)))=D(R(a)|(b|e))=D(D(b)|(b|e))=D((D(b)|b)|e)=D(b|e).$$  Hence, 
$R(a|D(R(a)|(b|e))|(b|e)=D(b|e)|(b|e)=b|e$.  So from (TC4b), 
$$(a\circ b)|e=a|D(b|e)\circ b|e=a|D(b|e)\circ R(a|D(R(a)|(b|e))|(b|e).$$
Dually, (TC4a$^\prime$) also holds.

The converse has already been noted.
\end{proof}

It follows that each of (TC4L) and (TC4R) generalises (TC4).  Shortly, we prove a kind of converse to Proposition \ref{catle}, but first we need some preliminary results pertaining to categories satisfying the conditions of that result.

\begin{lem}  \label{TC7}
If $C$ is a category with biaction satisfying (LMU) and (TC4L), then for all $x\in C$ and $e\in C^0$, $x|e=(x|e)|e$.
\end{lem}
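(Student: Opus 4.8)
The plan is to deduce $(x|e)|e = x|e$ by routing everything through the genuine category product, since in a category with biaction we have no law asserting associativity of the right action (there is no (TC3)), and so we cannot simply rewrite $(x|e)|e$ as $x|(e|e)$. The strategy has two stages: first use (LMU) to extract the auxiliary identity $R(x|e)|e = R(x|e)$, and then apply (TC4b$'$) to the trivial factorisation $x|e = (x|e)\circ R(x|e)$, letting the first stage make both factors on the right-hand side collapse.

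For the first stage, apply (LMU) with $s=x$ and $t=e$. Since $e\in C^0$ we have $D(e)=e$, so $x|D(e)=x|e$ and (LMU) reads $R(x|e) = D(R(x|e)|e)$. Now $R(x|e)\in C^0$ and $e\in C^0$, so by (TC1) their action $R(x|e)|e$ again lies in $C^0$; and every identity $g\in C^0$ satisfies $D(g)=g$ (a standard consequence of (C1) and (C4): from $D(s)\circ s = s$ one gets $D(s)=D(D(s)\circ s)=D(D(s))$). Hence $D(R(x|e)|e) = R(x|e)|e$, and combining with (LMU) gives the key auxiliary identity $R(x|e)|e = R(x|e)$.

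For the second stage, note that $x|e = (x|e)\circ R(x|e)$ by the category law (C1), the product existing because $R(x|e)=D(R(x|e))$. Apply (TC4b$'$) with $a=x|e$, $b=R(x|e)$ and the given $e$: it asserts that $(a\circ b)|e$, i.e. $(x|e)|e$, equals $(a|D(b|e))\circ (R(a|D(b|e))|(b|e))$. By the auxiliary identity, $b|e = R(x|e)|e = R(x|e)$, so $D(b|e)=R(x|e)$; thus $a|D(b|e) = (x|e)|R(x|e) = x|e$ by (TC2) (as $R(x|e)=R(a)$). Consequently $R(a|D(b|e)) = R(x|e)$, and the second factor is $R(x|e)|R(x|e)$, which equals $R(x|e)$ since any $g\in C^0$ satisfies $g|g = D(g)|g = g$ by (TC2). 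Therefore $(x|e)|e = (x|e)\circ R(x|e) = x|e$ by (C1), as required.

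I expect the only real obstacle to be recognising that the absence of right-action associativity forces the product-based argument rather than a direct algebraic manipulation; once that is accepted, the crux is the auxiliary identity $R(x|e)|e = R(x|e)$, which is exactly what makes the $b|e$ term in (TC4b$'$) reduce to $R(x|e)$ and thereby collapses the whole right-hand side back to $(x|e)\circ R(x|e)$. Everything else is bookkeeping with (TC1), (TC2) and the elementary facts $D(g)=R(g)=g$ and $g|g=g$ for identities $g\in C^0$.
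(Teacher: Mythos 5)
Your overall strategy is attractive, and stage two of your argument (the application of (TC4b$^\prime$) to the factorisation $x|e=(x|e)\circ R(x|e)$) is correct \emph{conditional on} the auxiliary identity $R(x|e)|e=R(x|e)$. The gap is in stage one, where you justify that identity by asserting that $R(x|e)|e\in C^0$ ``by (TC1)''. (TC1) says only that for $e,f\in C^0$ the expression $e|f$ is unambiguous (left action equals right action); it does \emph{not} say that $e|f$ lies in $C^0$. The paper is explicit that closure of $C^0$ under the biaction is a consequence of (TC5a) or (TC5b), and a category with biaction satisfies only (TC1), (TC2) and (TC6); neither (TC5) nor (TC3) is available under the hypotheses of this lemma. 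What (LMU) actually gives you with $s=x$, $t=e$ is $R(x|e)=D(R(x|e)|e)$, i.e.\ that $R(x|e)|e$ is an arrow with domain (and, one can check, codomain) $R(x|e)$ --- an endomorphism of that object, not necessarily the identity on it. So the key reduction $b|e=R(x|e)$ in your stage two is unsupported, and the whole argument collapses back onto exactly the fact you are trying to prove.

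The paper's proof is structured precisely to avoid needing $R(x|e)|e=R(x|e)$ or any closure of $C^0$ under the action. It first expands $x|e=(x\circ R(x))|e$ via (TC4b$^\prime$) to get $x|e=x|D(R(x)|e)\circ A|e$ with $A=R(x|D(R(x)|e))|R(x)$, and separately shows $(x|e)|e=(x|e)\circ R(x|e)|e$ and $R(x|e)=R(A|e)$. The point of introducing $A$ is that it has the special form $g|h$ with $g,h\in C^0$, so the idempotence $(A|e)|e=A|e$ \emph{can} be proved using only (TC6) and $e|e=e$, by repeatedly re-associating with the identity $g$ on the outside. Splicing these facts together cancels the extra factor $R(A|e)|e$ and yields the lemma. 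If you want to salvage your shorter route, you would need to supply an independent proof that $R(x|e)|e=R(x|e)$ from (LMU), (TC4L), (TC1), (TC2) and (TC6) alone --- and the fact that the paper proves the analogous absorption law only later, under the stronger hypotheses (TC3) and (TC5a) of Theorem \ref{bigsp1}, is strong evidence that it is not available here.
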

\begin{proof}
For $x\in C$ and $e\in C^0$, and letting $A=R(x|D(R(x)|e))|R(x)$, we shall prove the following in turn:
\ben[ (1)]
\item $x|e=x|D(R(x)|e)\circ A|e$;
\item $e|e=e$;
\item $A|e=(A|e)|e$;
\item $R(x|e)=R(A|e)$;
\item $(x|e)|e=(x|e)\circ R(x|e)|e$.
\een

First, using (TC4b$^\prime$) we have that
$$x|e=(x\circ R(x))|e=x|D(R(x)|e)\circ R(x|D(R(x)|e))|(R(x)|e),$$
and then applying (TC6) to the second term above gives (1).  For (2), $e|e=e|R(e)=e$ by (TC2).  For (3), letting $s=x|D(R(x)|e)$, we have that $A=R(s)|R(x)$, and so, using (TC6) and (2) gives
\bea 
(A|e)|e&=&((R(s)|R(x))|e)|e\\
&=&(R(s)|(R(x)|e))|e\\
&=&R(s)|((R(x)|e)|e)\\
&=&R(s)|(R(x)|(e|e))\\
&=&R(s)|(R(x)|e)\\
&=&(R(s)|R(x))|e\\
&=&A|e.
\eea
Applying (Cat4) to (1) gives (4).   Finally, for $x\in C$ and $e\in C^0$,
\bea
(x|e)|e&=&(x|e)|D(R(x|e)|e)\circ R((x|e)|D(R(x|e)|e))|(R(x|e)|e)\mbox{ by (TC4b$^\prime$)}\\
&=&(x|e)|R(x|e)\circ R((x|e)|R(x|e))|(R(x|e)|e)\mbox{ by (LMU)}\\
&=&(x|e)\circ R(x|e)|(R(x|e)|e)\mbox{ by (TC2)}\\
&=&(x|e)\circ (R(x|e)|R(x|e))|e\mbox{ by (TC6)}\\
&=&(x|e)\circ R(x|e)|e\mbox{ by (2)},
\eea
establishing (5).

Hence, for $x\in C$ and $e\in C^0$, 
\bea
(x|e)|e&=&(x|e)\circ R(x|e)|e\mbox{ by (5)}\\
&=&x|D(R(x)|e)\circ A|e\circ R(A|e)|e\mbox{ by (1) and (4)}\\
&=&x|D(R(x)|e)\circ (A|e)|e\mbox{ by (5) with $x$ replaced by $A$}\\
&=&x|D(R(x)|e)\circ A|e\mbox{ by (3)}\\
&=&x|e\mbox{ by (1),}\\
\eea
as required.
\end{proof} 

\begin{pro}   \label{pp}  
Let $C$ be a category with biaction satisfying (LMU) and (TC4L).  Then ${\mc S}(C)$ satisfies the following laws: for all $x,y\in C$ and $e\in C^0$, $e\otimes_l x=e|x$, $x\otimes_l e=x|e$, $x\otimes_l y=x\circ y$ whenever $R(x)=D(y)$, laws (CS1) to (CS6) for cat-semigroups, and the left match-up law $R(s\otimes_l D(t))=D(R(s\otimes_l D(t))\otimes_l t)$.  
\end{pro}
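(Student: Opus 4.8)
The plan is to reduce every asserted law to three basic identities that express the pseudoproduct $\otimes_l$ in terms of the biaction and the restricted product, namely
$$e\otimes_l x=e|x,\quad x\otimes_l e=x|e,\quad\mbox{and}\quad x\otimes_l y=x\circ y\ \mbox{ whenever }R(x)=D(y),$$
for all $x,y\in C$ and $e\in C^0$. Once these are available, each of (CS1)--(CS6) and the left match-up law follows by a short computation using (TC2), the category axioms (C1)--(C4), and (LMU). Note that $\otimes_l$ is everywhere defined precisely because (LMU) guarantees the existence of the restricted product appearing in its definition.

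I would first dispose of the two easy identities. For $e\otimes_l x=e|x$: since $D(x)\circ x=x$ exists by (C1), applying (TC4a) with $a=D(x)$ and $b=x$ gives $e|x=e|D(x)\circ R(e|D(x))|x$, and the right-hand side is exactly $e\otimes_l x$ by definition. For $x\otimes_l y=x\circ y$ when $R(x)=D(y)$: here $x|D(y)=x|R(x)=x$ by (TC2), so $R(x|D(y))=R(x)=D(y)$ and hence $R(x|D(y))|y=D(y)|y=y$ by (TC2); substituting into the definition of $\otimes_l$ yields $x\circ y$.

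The identity $x\otimes_l e=x|e$ is where the real content lies, and I expect this to be the main obstacle, since it draws on the full strength of Lemma \ref{TC7} rather than on any single axiom. Using $D(e)=e$ for $e\in C^0$ (a consequence of (C2)), the definition gives $x\otimes_l e=x|e\circ R(x|e)|e$. Identity~(5) established inside the proof of Lemma \ref{TC7} states precisely that $x|e\circ R(x|e)|e=(x|e)|e$, while Lemma \ref{TC7} itself gives $(x|e)|e=x|e$; chaining these equalities produces $x\otimes_l e=x|e$.

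With the three identities in hand, the remaining laws are routine. For (CS1), $D(x)\otimes_l x=D(x)|x=x$ by (TC2); for (CS2), $x\otimes_l R(x)=x|R(x)=x$ by (TC2); for (CS3), $D(x)\otimes_l D(x)=D(x)|D(x)=D(x)|R(D(x))=D(x)$ by (TC2) and (C2). Laws (CS4) and (CS5) hold because $D$ and $R$ are inherited unchanged from $C$, so that (C2) gives $D(R(x))=R(x)$ and $R(D(x))=D(x)$. For (CS6), if $R(x)=D(y)$ then $x\otimes_l y=x\circ y$ by the third identity, and (C4) gives $D(x\circ y)=D(x)$ and $R(x\circ y)=R(y)$. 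Finally, for the left match-up law, $R(s\otimes_l D(t))=R(s|D(t))$ by the second identity; writing $g=R(s|D(t))\in C^0$, the first identity gives $g\otimes_l t=g|t$, so that $D(R(s\otimes_l D(t))\otimes_l t)=D(g|t)=R(s|D(t))=R(s\otimes_l D(t))$, where the middle equality is exactly (LMU).
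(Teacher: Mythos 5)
Your proposal is correct and follows essentially the same route as the paper: establish the three identities $e\otimes_l x=e|x$, $x\otimes_l e=x|e$ and $x\otimes_l y=x\circ y$ (when $R(x)=D(y)$) first, then deduce (CS1)--(CS6) and the left match-up law. The only difference is cosmetic: for $x\otimes_l e=x|e$ you chain identity (5) from the proof of Lemma \ref{TC7} with the lemma's statement, whereas the paper re-runs part of that computation via (TC4b$^\prime$), (LMU) and (TC6); both rest on the same facts and hypotheses.
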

\begin{proof} 
For $x\in C$ and $e\in C^0$, using (TC4a) we obtain
$$e|x=e|(D(x)\circ x)=e|D(x)\circ R(e|D(x))|x=e\otimes_l x.$$

Again, for $x\in C$ and $e\in C^0$, 
\bea
x|e &=& (x|e)|e\mbox{ by Lemma \ref{TC7}}\\ 
&=& (x|e\circ R(x|e))|e\\
&=& A\circ R(A)|(R(x|e)|e)\mbox{ by (TC4b$^\prime$),}
\eea
where $A=(x|e)|D(R(x|e)|e)$, which equals $(x|e)|R(x|e)=x|e$ by (LMU), and so
\bea
x|e &=& x|e\circ R(x|e)|(R(x|e)|e)\\
&=& x|e\circ (R(x|e)|R(x|e))|e\mbox{ by (TC6)}\\
&=& x|e\circ R(x|e)|e\mbox{, by (2) in the proof of Lemma \ref{TC7}}\\
&=&(x|D(e))\circ R(x|D(e))|e\\
&=& x\otimes_l e.
\eea
If $R(x)=D(y)$, then 
$$x\otimes_l y=x|D(y)\circ R(x|D(y))|y=x|R(x)\circ R(x|R(x))|y=x\circ R(x)|y=x\circ D(y)|y=x\circ y.$$

We turn to the precat-semigroup laws (CS1)--(CS5). By (2) in the proof of Lemma \ref{TC7}, for $e\in C^0$, we have $$e\otimes_l e=e|D(e)\circ R(e|D(e))|e=e|R(e)\circ R(e|e)|e=e\circ R(e)|e=R(e)|e=e|e=e,$$ establishing (CS1).  Now (CS2) and (CS3) are immediate, and for (CS4) and (CS5), observe that for all $x\in C$, $D(x)\otimes_l x=D(x)|x=x$, and similarly $x\otimes_l R(x)=x$.  

For the cat-semigroup laws in (CS6), if $x,y\in C$ are such that $R(x)=D(y)$ then $$D(x\otimes_l y)=D(x|D(y))=D(x|R(x))=D(x),$$ and $$R(x\otimes_l y)=R(R(x|D(y))|y)=R(R(x|R(x))|y)=R(R(x)|y)=R(D(y)|y)=R(y).$$ 

From (LMU) and what we have already shown, we have that
$$R(xD(y))=R(x|D(y))=D(R(x|D(y))|y)=D(R(xD(y))y),$$ 
proving the left match-up condition.
\end{proof}

\begin{cor}  \label{ppcor}
If $C$ is a category with biaction satisfying (LMU) and (TC4L), and $\otimes_l$ is assocative, then ${\mc S}(C)$ is a cat-semigroup satisfying the left match-up condition.
\end{cor}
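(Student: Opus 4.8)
The plan is to observe that Corollary \ref{ppcor} is essentially a repackaging of Proposition \ref{pp} once the extra associativity hypothesis is in force. Proposition \ref{pp} already establishes, for a category with biaction $C$ satisfying (LMU) and (TC4L), that the algebra ${\mc S}(C)=(S,\otimes_l,D,R)$ satisfies each of the laws (CS1)--(CS6) together with the left match-up law $R(s\otimes_l D(t))=D(R(s\otimes_l D(t))\otimes_l t)$, where throughout the binary operation is read as $\otimes_l$. The single thing that Proposition \ref{pp} does not deliver---and deliberately so, since it only works with the algebra $(S,\otimes_l,D,R)$ without assuming $\otimes_l$ behaves like a semigroup product---is the associativity of $\otimes_l$. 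That is precisely the hypothesis added in the statement of the corollary.

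First I would invoke the assumption that $\otimes_l$ is associative, so that $(S,\otimes_l)$ is a semigroup and hence $(S,\otimes_l,D,R)$ is a genuine biunary semigroup. Now Proposition \ref{pp} guarantees that this biunary semigroup satisfies (CS1)--(CS5), so it is a precat-semigroup; since it further satisfies (CS6), it is a cat-semigroup; and since it satisfies the left match-up law with product $\otimes_l$, it satisfies the left match-up condition. This yields the conclusion directly, with no further computation required.

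The only point requiring any care---and it is a point of interpretation rather than a genuine obstacle---is to read every law supplied by Proposition \ref{pp} as an identity in the operation $\otimes_l$, and to recognise that such identities only acquire the force of the (cat-)semigroup axioms once the underlying operation is associative. In other words, Proposition \ref{pp} does all the equational work, and the hypothesis of the corollary supplies the missing structural ingredient that turns $(S,\otimes_l,D,R)$ from a mere biunary algebra into a biunary semigroup; the proof is therefore immediate and involves no delicate manipulation.
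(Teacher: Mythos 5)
Your proposal is correct and matches the paper's reasoning exactly: the corollary is stated without a separate proof precisely because Proposition \ref{pp} supplies all the required identities for $\otimes_l$, and the added associativity hypothesis is the only missing ingredient needed to make $(C,\otimes_l,D,R)$ a (cat-)semigroup.
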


Condition (LMU) may be viewed as a replacement for (TC5) in the axioms of transcription categories, with (TC4L) replacing (TC4).  A nine-element counterexample was found using {\em Mace4} \cite{P9M4}, showing that (LMU) is not redundant in the presence of the other laws.  

The associativity assumption in Corollary \ref{ppcor} is necessary.

\begin{eg} A category with biaction satisfying (LMU) and (TC4L) in which $\otimes_l$ is not associative.  \label{notfully}
\end{eg}
\vspace{-5pt}

Let $C=\{s,e,f\}$ be the category in which $C^0=\{e,f\}$ and $D(s)=R(s)=f$, with $s\circ s=f$.  (This fully specifies $C$ as a category.)  Define a biaction on $C$ as follows: $s|e=s|f=s$, $e|s=e|f=e$, $f|s=s$, $f|e=f$, $e|e=e$ and $f|f=f$.  It is a tedious but routine exercise to verify that $C$ is category with biaction and satisfies (LMU) and (TC4L).  But $(s\otimes_l e)\otimes_l s=(s|e)\otimes_l s=s\otimes_l s=s\circ s=f$, whereas $s\otimes_l(e\otimes_l s)=s\otimes_l e=s|e=s$, so $\otimes_l$ is not associative.
\medskip

Recall the definition of a biaction functor as in Definition \ref{funcdef}.

\begin{thm}  \label{big}
The category of cat-semigroups satisfying the left match-up condition is isomorphic to the category of categories with biaction satisfying (LMU), (TC4L) and associativity of $\otimes_l$, with morphisms being biaction functors.
\end{thm}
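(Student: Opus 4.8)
The plan is to exhibit the object assignment $S\mapsto{\mc C}(S)$ and its inverse $C\mapsto{\mc S}(C)$ as a pair of mutually inverse functors, each acting as the identity on underlying functions. That these are well defined on objects is exactly the content of earlier results: Proposition~\ref{catle} shows that if $S$ is a cat-semigroup satisfying the left match-up condition then ${\mc C}(S)$ is a category with biaction satisfying (LMU), (TC4L), and has $\otimes_l$ equal to the (associative) semigroup product of $S$; and Corollary~\ref{ppcor} shows that if $C$ is a category with biaction satisfying (LMU), (TC4L) and associativity of $\otimes_l$, then ${\mc S}(C)$ is a cat-semigroup satisfying the left match-up condition. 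So both assignments land in the correct class.

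First I would verify that the two object assignments are mutually inverse. For ${\mc S}({\mc C}(S))=S$: the carrier and the operations $D,R$ are unchanged throughout, and by Proposition~\ref{catle} the pseudoproduct $\otimes_l$ computed in ${\mc C}(S)$ returns the original product of $S$, so the two biunary semigroups coincide. For ${\mc C}({\mc S}(C))=C$: again $D,R$ and the carrier are preserved, and Proposition~\ref{pp} supplies the three identities $x\otimes_l y=x\circ y$ whenever $R(x)=D(y)$, $e\otimes_l x=e|x$, and $x\otimes_l e=x|e$. The first says the restricted product of ${\mc S}(C)$ reconstructs $\circ$, while the second and third say its biaction reconstructs the original biaction of $C$; hence ${\mc C}({\mc S}(C))$ equals $C$ as a category with biaction.

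Next I would establish the morphism correspondence, the key being that, for a single function $\psi$ between the common carriers, being a $D,R$-respecting semigroup homomorphism of the cat-semigroups is equivalent to being a biaction functor (Definition~\ref{funcdef}) of the associated categories with biaction. The forward direction is routine: a $D,R$-homomorphism $\psi$ sends the restricted product to the restricted product (since $R(s)=D(t)$ forces $R(\psi(s))=D(\psi(t))$), so it is a functor, and $\psi(es)=\psi(e)\psi(s)$ together with $\psi(e)\in D(S_2)$ gives $\psi(e|s)=\psi(e)|\psi(s)$ and dually, so $\psi$ is a biaction functor. The hard part, and the crux of the theorem, is the converse: given a biaction functor $\psi$ I must show it preserves the full semigroup product. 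Here I would use that the left match-up condition lets the product be written as $st=s|D(t)\circ R(s|D(t))|t$; applying that $\psi$ is a functor (preserving $\circ$ and $D,R$) and a biaction functor then yields $\psi(st)=\psi(s)|D(\psi(t))\circ R(\psi(s)|D(\psi(t)))|\psi(t)=\psi(s)\psi(t)$, the middle restricted product existing because (LMU) holds in the target. I expect this converse to be the main obstacle, since it is where the left match-up condition must be invoked to force multiplicativity out of mere functoriality of the biaction.

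Finally, since both assignments fix underlying functions, they trivially preserve identity maps and composition, so they are functors; the object bijection above and the morphism equivalence just established show they are mutually inverse, giving the claimed isomorphism of categories.
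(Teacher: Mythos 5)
Your proposal is correct and follows essentially the same route as the paper: it establishes mutual inverseness on objects via Proposition~\ref{catle} and the identities $x\otimes_l y=x\circ y$, $e\otimes_l x=e|x$, $x\otimes_l e=x|e$ from Proposition~\ref{pp}, and handles morphisms by noting that restricted products and biactions are special cases of the semigroup product (forward direction) and by expanding $\otimes_l$ through a biaction functor (converse). The only cosmetic difference is that you cite Corollary~\ref{ppcor} where the paper cites Proposition~\ref{pp} directly, which amounts to the same thing.
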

\begin{proof}
Let $S$ be a cat-semigroup satisfying the left match-up condition.  Then by Proposition \ref{catle}, ${\mc C}(S)$ satisfies the stated conditions, and ${\mc S}{\mc C}(S)=S$.  

Conversely, if $C$ is a category with biaction satisfying the stated conditions, then by Proposition \ref{pp}, ${\mc S}(C)$ is a cat-semigroup satisfying the left match-up condition, in which $s\circ t=st$ whenever $R(s)=D(t)$, $e|s=e\otimes_l s$ and $s|e=s\otimes_l e$ for all $s\in C$ and $e\in C^0$.  These agree with the definitions of the category product and biaction operations in ${\mc C}{\mc S}(C)$, which therefore equals $C$.

We move to the functorial properties. Suppose $f:S_1\rightarrow S_2$ is a semigroup homomorphism respecting $D,R$.  It is immediate that $f:{\mc C}(S_1)\rightarrow {\mc C}(S_2)$ (defined as for $f$ on the underlying sets, so we use the same name for it) is a biaction functor, since the category product and the biactions are simply special cases of semigroup product.  Conversely, if $F:C_1\rightarrow C_2$ is a category with biaction functor and $C_1,C_2$ satisfy (LMU), (TC4L) and associativity of $\otimes_l$, then for $s,t\in {\mc S}(C_1)$, we have $F(D(t))=D(F(t))$ and similarly for $R$, and then
\bea
F(s\otimes_l t)&=&F(s|D(t)\circ R(s|D(t))|t)\\
&=&F(s|D(t))\circ F(R(s|D(t))|t)\\
&=&F(s)|F(D(t))\circ F(R(s|D(t))|F(t)\\
&=&F(s)|D(F(t))\circ R(F(s)|D(F(t)))|F(t)\\
&=&F(s)\otimes_l F(t),
\eea
so $F$ determines a $D$- and $R$-respecting homomorphism ${\mc S}(C_1)\rightarrow {\mc S}(C_2)$.
\end{proof}

The above isomorphism restricts to the one between the category of localisable semigroups and transcription categories implicit in \cite{fitzkin}.  Hence it also restricts to one between the category of Ehresmann semigroups and categories with Ehresmann biaction as in \cite{law2}. 

As far as we know, all previous ESN theorems involving biunary semigroups, both the left and right congruence conditions have been assumed.  Even in settings where biunary semigroups are replaced by semigroups with distinguished idempotents (with multiple candidates for $D(s)$ or $R(s)$ for each $s$), such as that considered in \cite{gw} and \cite{wang1}, versions of the left and right congruence conditions are assumed.  But the above result applies to cat-semigroups that need not satisfy both congruence conditions, such as  $Rel^d(X)$.  In the subsection to follow, we specialise further and consider a class that includes this example.

\subsection{Special case: left semi-localisable semigroups}  \label{subsec:special}

The description of the categories with biaction corresponding to cat-semigroups satisfying the left match-up condition given in Theorem \ref{big} is slightly unsatisfactory, since it simply imposes the requirement of associativity of the left pseudoproduct; on the other hand, this does at least yield a finite first-order axiomatisation expressible in the language of categories with biaction.  Of course, we prefer simpler and more natural laws not specifically referencing the left pseudoproduct yet which force its associativity.  Fortunately, this is possible for the class of left semi-localisable semigroups.  

\begin{thm}  \label{bigsp1}
For left semi-localisable semigroups, the corresponding categories with biaction are those satisfying (TC3), (TC4L) and (TC5a). 
\end{thm}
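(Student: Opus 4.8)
The statement is an object-level characterisation that I would prove by establishing both inclusions and then inheriting the category isomorphism (with biaction functors) from Theorem~\ref{big}. The overarching plan is to reduce everything to results already available: by Proposition~\ref{bandleftmatch} a left semi-localisable semigroup is exactly a cat-semigroup satisfying the left match-up condition in which $D(S)$ is a band, and by Theorem~\ref{big} such cat-semigroups correspond to categories with biaction satisfying (LMU), (TC4L) and associativity of $\otimes_l$, with $C^0 = D(\mc{S}(C))$ carrying the product $\otimes_l|_{C^0} = {}|$. So the real content is to show that, in the presence of (TC4L), the conditions (TC3) and (TC5a) are precisely what encode ``$C^0$ is a band together with (LMU) and associativity''. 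I would prove: (i) the forward direction that $\mc{C}(S)$ satisfies (TC3), (TC4L), (TC5a); and (ii) the converse that (TC3), (TC4L), (TC5a) force $C^0$ to be a band, force (LMU), and force associativity of $\otimes_l$, so that Corollary~\ref{ppcor} and Proposition~\ref{bandleftmatch} apply.

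For the forward direction, let $S$ be left semi-localisable. By Proposition~\ref{catle}, $\mc{C}(S)$ already satisfies (LMU) and (TC4L) with $\otimes_l$ equal to the product. Since $D(S)$ is a band, $e|f = ef \in D(S) = C^0$, giving sense to (TC3), which then reduces to associativity of multiplication; and (TC5a) follows from the left congruence condition together with the band property, via $D(e|s) = D(es) = D(eD(s)) = eD(s) = e|D(s)$, the third equality holding because $eD(s) \in D(S)$ is a projection.

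For the converse, I would first show $C^0$ is a band under ${}|$: by (TC5a), $D(e|f) = e|D(f) = e|f$, so $e|f \in C^0$; by (TC2), $e|e = D(e)|e = e$; and (TC3) gives associativity. Next I would derive (LMU). Applying (TC4b$^\prime$) to $s \circ R(s) = s$ with any $e \in C^0$, and using $s|D(R(s)|e) = s|(R(s)|e) = (s|R(s))|e = s|e$ (by the band, (TC3) and (TC2)), I get that $s|e \circ R(s|e)|(R(s)|e)$ exists and equals $s|e$; its existence forces $R(s|e) = R(s|e)|(R(s)|e)$, and since $R(s)|e \in C^0$ with $e$ idempotent this yields $R(s|e)|e = R(s|e)$ by pure band manipulation. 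Taking $e = D(t)$ and rewriting $D(R(s|D(t))|t) = R(s|D(t))|D(t)$ via (TC5a) then gives exactly $R(s|D(t)) = D(R(s|D(t))|t)$, which is (LMU).

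The main obstacle is associativity of $\otimes_l$, which by Example~\ref{notfully} does not follow from (LMU) and (TC4L) alone, so (TC3) and (TC5a) must do the work. Here my plan is to expand both $(x \otimes_l y)\otimes_l z$ and $x \otimes_l (y \otimes_l z)$ as three-fold $\circ$-products. Using (TC6), (TC3), (TC5a), (TC2), Lemma~\ref{leftcongcat} and Lemma~\ref{TC7}, I would first show that both triple products share the common leading factor $x|D(y|D(z))$; the key simplification is $D(y)|D(y|D(z)) = D(y|D(z))$, obtained by applying $D$ and (TC5a) to $D(y)|(y|D(z)) = (D(y)|y)|D(z) = y|D(z)$. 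Associativity then reduces, using associativity of $\circ$ in the category, to a single identity asserting that the two remaining two-fold products — built from $y|D(z)$, $z$ and the idempotents $R(x|D(y))$ and $R(x|D(y|D(z)))$ — coincide. The delicate point, and where I expect the real difficulty to lie, is reconciling these band idempotents: one grouping carries an extra factor $R(x|D(y))$ that the other does not, and showing its effect is absorbed requires genuine band relations. Because $C^0$ is only a band and not a semilattice, and because (TC5b) is unavailable so that $R(a|e)$ cannot be related to $R(a)|e$, these relations cannot be read off directly; instead they must be extracted from (TC5a), (LMU) and (TC4b$^\prime$) together (for instance via the derived equation $R(s|e) = R(s|e)|(R(s)|e)$ above). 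Once associativity is in hand, Corollary~\ref{ppcor} makes $\mc{S}(C)$ a cat-semigroup satisfying the left match-up condition, the band structure of $C^0 = D(\mc{S}(C))$ makes it left semi-localisable by Proposition~\ref{bandleftmatch}, and $\mc{C}\mc{S}(C) = C$; the isomorphism of categories and the biaction-functor correspondence are then inherited by restriction from Theorem~\ref{big}.
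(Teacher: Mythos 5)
Your overall strategy is exactly the paper's: the forward direction via Proposition~\ref{catle} together with the band and left congruence properties; the converse by showing $C^0$ is a band under the action, deriving (LMU), proving associativity of $\otimes_l$, and then invoking Corollary~\ref{ppcor} and Proposition~\ref{bandleftmatch}, with the functorial correspondence inherited from Theorem~\ref{big}. Your preliminary derivations are correct and coincide step-for-step with the printed proof: applying (TC4b$^\prime$) to $x=x\circ R(x)$ to obtain $x|e=x|(R(x)|e)$ and $R(x|e)=R(x|e)|(R(x)|e)$, extracting $R(x|e)|e=R(x|e)$ by band manipulation, and passing to (LMU) via (TC5a) (the paper routes this through Lemma~\ref{leftcongcat}, which is the same thing).

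The gap is that the associativity of $\otimes_l$ --- the crux of the converse, and the one point where Theorem~\ref{big} cannot simply be quoted, as Example~\ref{notfully} shows --- is described but not carried out. You correctly identify the target canonical form with leading factor $s|D(t|D(u))$ and the correct auxiliary identity $D(t)|D(t|D(u))=D(t|D(u))$, but you then explicitly defer the ``delicate point'' of absorbing the idempotent $R(x|D(y))$, saying only that the needed relations ``must be extracted from (TC5a), (LMU) and (TC4b$^\prime$) together.'' That unexecuted computation is where the theorem actually lives, and it occupies most of the paper's proof. For what it is worth, the identities you have already derived do suffice to close it: the paper eliminates $R(s|D(t))$ via the instance $(s|D(t))|(R(s|D(t))|f)=(s|D(t))|f$ of your $x|(R(x)|e)=x|e$, and simplifies the trailing factor $R(\cdots)|t|D(u)$ using $R(x|e)|e=R(x|e)$, the fact that $e|f\in C^0$, (TC6) and Lemma~\ref{leftcongcat}, after first expanding $(s\otimes_l t)|D(u)$ with (TC4b$^\prime$); the right-associated product then collapses to the same three-fold $\circ$-product using (TC4a) and $D(t\otimes_l u)=D(t|D(u))$. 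Until that chain is written out, the proof is incomplete at its most critical step.
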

\begin{proof}
Suppose $S$ is left semi-localisable.  By Proposition \ref{catle}, (TC4L) holds.  That (TC3) and (TC5a) hold in ${\mc C}(S)$ is clear, because $D(S)$ is a band in $S$, the left congruence condition is satisfied, and the biaction in ${\mc C}(S)$ is semigroup multiplication. 

Conversely, suppose $C$ is a category with biaction satisfying (TC3), (TC4L) and (TC5a); in particular then, $D(e|f)=e|f$ for all $e,f\in C^0$.  We first show that (LMU) follows.  

For $x\in C$ and $e\in C^0$, by (TC4b$^\prime$), we have that 
$$x|e=(x\circ R(x))|e=x|(R(x)|e)\circ R(x|(R(x)|e))|(R(x)|e),$$
and so because $R(x|(R(x)|e))|(R(x)|e)\in C^0$, we obtain $x|e=x|(R(x)|e)$ and since $e|e=D(e)|e=e$,
$$R(x|e)=R(x|(R(x)|e))|(R(x)|e)=R(x|e)|R(R(x)|e)=R(x|e)|(R(x)|e)=(R(x|e)|R(x))|e$$
$$=(R(x|e)|R(x))|(e|e)=((R(x|e)|R(x))|e)|e=(R(x|e)|(R(x)|e))|e=R(x|e)|e,$$
so by Lemma \ref{leftcongcat}, for all $x,y\in C$, we have
$$D(R(x|D(y))|y)=D(R(x|D(y))|D(y))=D(R(x|D(y)))=R(x|D(y)),$$
establishing (LMU).

Now suppose $s,t,u\in C$.  Then
$$(s\otimes_l t)\otimes_l u=((s\otimes_l t)|D(u))\circ R((s\otimes_l t)|D(u))|u.$$
But
\bea
(s\otimes_l t)|D(u)&=&(s|D(t)\circ R(s|D(t))|t)|D(u)\\
&=&(s|D(t))|(D(R(s|D(t))|t|D(u))\\
&&\circ R((s|D(t))|(D(R(s|D(t))|t|D(u)))|(R(s|D(t))|t|D(u))\\
&=&A\circ R(A)|(R(s|D(t))|t|D(u))\\
\eea
where 
\bea
A&=&(s|D(t))|(D(R(s|D(t))|t|D(u)))\\
&=&(s|D(t))|(D(R(s|D(t))|D(t|D(u))))\\
&=&(s|D(t))|(R(s|D(t))|D(t|D(u)))\\
&=&(s|D(t))|D(t|D(u))\hspace{2cm}\mbox{$(*)$}\\
&=&s|(D(t)|D(t|D(u)))\\
&=&s|D(D(t)|D(t|D(u)))\\
&=&s|D(D(t)|(t|D(u))\\
&=&s|D((D(t)|t)|D(u))\\
&=&s|D(t|D(u)),
\eea
and so 
\bea
&&R(A)|(R(s|D(t))|t|D(u))\\
&=&R(s|D(t|D(u)))|(R(s|D(t))|t|D(u))\\
&=&R(s|D(t)|D(t|D(u)))|(R(s|D(t))|t|D(u)) \mbox{ by $(*)$ above}\\
&=&R((s|D(t)|R(s|D(t)))|D(t|D(u)))|(R(s|D(t))|(D(t|D(u))|(t|D(u)))\\
&=&R((s|D(t)|R(s|D(t)))|D(t|D(u)))|t|D(u)\\
&&\mbox{ since $e|f\in C^0$ and $R(x|e)|e=R(x|e)$ for all $x\in C$ and $e,f\in C^0$ as above}\\
&=&R((s|D(t))|D(t|D(u)))|t|D(u)\\
&=&R(A)|t|D(u) \mbox{ by $(*)$ above}\\
&=&R(s|D(t|D(u)))|t|D(u).
\eea
Hence $(s\otimes_l t)|D(u)=A\circ R(A)|(R(s|D(t))|t|D(u))=s|D(t|D(u))\circ R(s|D(t|D(u)))|t|D(u)$, and so
\bea
R((s\otimes_l t)|D(u))|u 
&=&R(R(s|D(t|D(u)))|t|D(u))|u\\
&=&R(R(s|D(t|D(u)))|t|D(u)|R(t|D(u)))|u\\
&=&R(R(s|D(t|D(u)))|t|D(u)|R(t|D(u)))|R(t|D(u))|u\\
&=&R(R(s|D(t|D(u))|t|D(u))|R(t|D(u))|u.
\eea
Pulling all this together, 
\bea
(s\otimes_l t)\otimes_l u&=&((s\otimes_l t)|D(u))\circ R((s\otimes_l t)|D(u))|u\\
&=&s|D(t|D(u))\circ R(s|D(t|D(u)))|t|D(u)\\
&&\circ R(R(s|D(t|D(u))|t|D(u))|R(t|D(u))|u.
\eea

On the other hand, 
\bea
s\otimes_l (t\otimes_l u) &=& (s|D(t\otimes_l u))\circ (R(s|D(t\otimes_l u))|(t\otimes_l u))\\
&=&s|D(t|D(u))\circ R(s|D(t\otimes_l u))|((t|D(u))\circ (R(t|D(u))|u))\\
&=&s|D(t|D(u))\circ R(s|D(t|D(u)))|t|D(u)\\
&&\circ R(R(s|D(t|D(u))|t|D(u))|R(t|D(u))|u\\
&=& (s\otimes_l t)\otimes_l u,
\eea
and $\otimes_l$ is associative.   Hence by Corollary \ref{ppcor}, ${\mc S}(C)$ is a cat-semigroup satisfying the left match-up condition.  Then because $e\otimes_l f=e|f$ for all $e,f\in C^0$, it is immediate from the law $D(e|f)=e|f$ for all $e,f\in C^0$ that $D({\mc S}(C))$ is a band.
\end{proof}

Again, the category isomorphism of Theorem \ref{big} restricts to one between the classes mentioned in Theorem \ref{bigsp1}.

\begin{cor}
For left restriction semigroups with range, the corresponding categories with biaction are those as in Theorem \ref{bigsp1} additionally satisfying $e|f=f|e$ for all $e,f\in C^0$, and $s|e=D(s|e)|s$ for all $s\in C$ and $e\in C^0$.
\end{cor}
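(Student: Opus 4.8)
The plan is to combine Theorem~\ref{leftrestll}, which identifies left restriction semigroups with range as exactly the left semi-localisable, D-ample precat-semigroups in which $D(S)$ is a semilattice, with the category isomorphism for left semi-localisable semigroups established in Theorem~\ref{bigsp1}. Since left restriction semigroups with range form the subclass of left semi-localisable semigroups cut out by two further properties -- D-ampleness and commutativity of the projections -- it suffices to translate each of these two properties across the mutually inverse constructions ${\mc S}$ and ${\mc C}$ into equivalent laws on the associated categories with biaction satisfying (TC3), (TC4L) and (TC5a). The two laws in the statement, $e|f=f|e$ and $s|e=D(s|e)|s$, are the claimed translations, and the corollary will then follow by restricting the isomorphism of Theorem~\ref{bigsp1} to these subclasses.

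The semilattice condition is the easy half. By the proof of Theorem~\ref{bigsp1}, $D({\mc S}(C))$ is already a band whenever $C$ satisfies (TC3), (TC4L) and (TC5a), so $D(S)$ is a semilattice precisely when it is commutative; and since $\otimes_l$ restricted to $C^0$ satisfies $e\otimes_l f=e|f$ by Proposition~\ref{pp}, commutativity of the projections is exactly the law $e|f=f|e$. In the forward direction, in ${\mc C}(S)$ arising from a left restriction semigroup with range we have $e|f=ef=fe=f|e$ because $D(S)$ is a semilattice. The real work is translating the D-ample law $xD(y)=D(xy)x$. The key observation I would establish first is that, for any category with biaction satisfying (LMU) and (TC4L), one has $D(x\otimes_l y)=D(x|D(y))$: this is immediate from $x\otimes_l y=x|D(y)\circ R(x|D(y))|y$ together with (C4), the category law that the domain of a composite equals the domain of its first factor. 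Using $x\otimes_l D(y)=x|D(y)$ and $D(x\otimes_l y)\otimes_l x=D(x\otimes_l y)|x=D(x|D(y))|x$ (both via Proposition~\ref{pp}), the D-ample law $x\otimes_l D(y)=D(x\otimes_l y)\otimes_l x$ becomes $x|D(y)=D(x|D(y))|x$ for all $x,y\in C$. As $D(y)$ ranges over all of $C^0$, this is exactly $s|e=D(s|e)|s$ with $s=x$ and $e=D(y)$; conversely, applying the latter law with $e=D(y)$ recovers D-ampleness. In the forward direction, for ${\mc C}(S)$ with $S$ D-ample, setting $y=e\in C^0$ (so $D(e)=e$) gives $se=sD(e)=D(se)s$, which is $s|e=D(s|e)|s$.

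I expect the translation of D-ampleness to be the main obstacle, since the D-ample law mentions a product $D(xy)$ that, unwound through $\otimes_l$, must be shown to collapse to the single projection $D(x|D(y))$ before the proposed one-sided action law can be applied; once that reduction is in hand everything else is bookkeeping using Proposition~\ref{pp} and the equivalence already supplied by Theorem~\ref{bigsp1}. Finally, as with the remarks following the earlier theorems, I would note that the isomorphism of Theorem~\ref{big} (equivalently Theorem~\ref{bigsp1}) restricts to an isomorphism between the two subclasses so characterised, since the additional laws $e|f=f|e$ and $s|e=D(s|e)|s$ are manifestly preserved and reflected by biaction functors.
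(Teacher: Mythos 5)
Your proposal is correct and follows essentially the same route as the paper: restrict the isomorphism of Theorem \ref{bigsp1}, translate commutativity of the semilattice into $e|f=f|e$ via $e\otimes_l f=e|f$, and translate D-ampleness into $s|e=D(s|e)|s$ using $x\otimes_l e=x|e$ together with the identity $D(x\otimes_l y)=D(x|D(y))$ (which the paper phrases as the left congruence condition in ${\mc S}(C)$ and you derive from the category law (C4) --- the same fact). No gaps.
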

\begin{proof}
If $S$ is a left restriction semigroup with range, the fact that ${\mc C}(S)$ satisfies the stated conditions is immediate, since the biaction is semigroup multiplication in $S$.  Conversely, if $C$ is a category with biaction in which the two given laws plus those in Theorem \ref{bigsp1} hold, then $e\otimes_l f=e|f=f|e=f\otimes_l e$, so $D({\mc S}(C))$ is a commutative band, and for all $s,t\in C$, $s\otimes_l D(t)=s|D(t)=D(s|D(t))|s=D(s\otimes_l D(t))\otimes_l s=D(s\otimes_l t)\otimes_l s$ by the left congruence condition in ${\mc S}(C)$, so it is D-ample.
\end{proof}

We note that the categories with biaction ${\mc C}(Rel^d(X))$ and ${\mc C}(Rel_X)$ determined by $Rel^d(X)$ and $Rel(X)$ respectively are identical as categories, and the left actions of identities also coincide, but the right actions are different, giving rise to the different cat-semigroup structures.

\subsection{Categories and the match-up conditions}

In light of Proposition \ref{TC4}, one might hope that the categories with biaction corresponding to cat-semigroups satisfying both match-up conditions as in Case (ii) above might be those satisfying (TC4), but this turns out to be too big a class, associativity of the pseudoproduct again being the snag.  However, we find a relatively simple additional condition that yields the hoped-for correspondence.  Case (iii) follows this same path.   

By Proposition \ref{extensible}, a category with biaction satisfying (TC4) also satisfies (TC4L), (TC4R), (LMU) and (RMU), and so both the left and right pseudoproducts may be defined.

\begin{pro}  \label{lrequal}
In a category with biaction satisfying (TC4), for all $s,t\in C$, 
$$s\otimes_l t = s\otimes_r t= s|D(R(s)|D(t))\circ R(s)|D(t)\circ R(R(s)|D(t))|t.$$
\end{pro}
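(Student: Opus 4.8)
The plan is to route both products through the single ``middle'' element $g:=R(s)|D(t)$, showing that each of $s\otimes_l t$ and $s\otimes_r t$ factors as $M:=s|D(g)\circ g\circ R(g)|t$. By Proposition \ref{extensible}, the hypothesis (TC4) delivers (TC4a), (TC4b), (TC4b$^\prime$), (LMU) and (RMU), so both pseudoproducts exist, with $s\otimes_l t=s|D(t)\circ R(s|D(t))|t$ and $s\otimes_r t=s|D(R(s)|t)\circ R(s)|t$. First I would check that $M$ is a legitimate composite: the equality $R(s|D(g))=D(g)$ is just (RMU) read with $t$ replaced by $D(t)$, and $R(g)=D(R(g)|t)$ is (LMU) read with $s$ replaced by $R(s)$, which is precisely what is needed for $s|D(g)\circ g$ and $g\circ R(g)|t$ to be defined. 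I would also record the two one-sided congruence facts I shall use repeatedly: $D(e|a)=D(e|D(a))$ (Lemma \ref{leftcongcat}) and, dually from (TC4b), $R(a|e)=R(R(a)|e)$.

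For the left pseudoproduct, the dual congruence fact gives $R(s|D(t))=R(R(s)|D(t))=R(g)$, so that $s\otimes_l t=s|D(t)\circ R(g)|t$, and it remains only to prove $s|D(t)=s|D(g)\circ g$. I would obtain this by applying (TC4b$^\prime$) to $s=s\circ R(s)$ with the identity $D(t)$, giving $s|D(t)=s|D(g)\circ R(s|D(g))|g$, and then collapsing the tail using $R(s|D(g))=D(g)$ (the existence check above) together with the instance $D(g)|g=g$ of (TC2). Substituting yields $s\otimes_l t=(s|D(g)\circ g)\circ R(g)|t=M$.

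The right pseudoproduct is the mirror image and is even shorter. Lemma \ref{leftcongcat} gives $D(R(s)|t)=D(R(s)|D(t))=D(g)$, so $s\otimes_r t=s|D(g)\circ R(s)|t$, and it remains to prove $R(s)|t=g\circ R(g)|t$. This is immediate from (TC4a) applied to $R(s)|(D(t)\circ t)$: since $D(t)\circ t=t$, (TC4a) rewrites $R(s)|t$ as $(R(s)|D(t))\circ R(R(s)|D(t))|t=g\circ R(g)|t$. Associativity of $\circ$ then gives $s\otimes_r t=s|D(g)\circ(g\circ R(g)|t)=M$, and combining the two halves yields $s\otimes_l t=s\otimes_r t=M$.

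The main obstacle is less any single hard identity than the discipline of carrying the argument out for a bare category with biaction satisfying (TC4): neither (TC3) nor (TC5) nor the relation $e|f\in C^0$ is available, so $g$ must be handled as a genuine arrow rather than an identity, and one cannot simply absorb it. Every collapse of a composite then has to be justified through the category laws $x\circ R(x)=x$ and $D(x)\circ x=x$ and through (TC2), while every manipulation of a $D$ or an $R$ of an action must be pushed through Lemma \ref{leftcongcat}, its dual, (LMU) or (RMU). The two reductions $s|D(t)=s|D(g)\circ g$ and $R(s)|t=g\circ R(g)|t$ are where this bookkeeping concentrates, and getting their existence conditions to line up (via (RMU) and (TC4a) respectively) is the real content of the proof.
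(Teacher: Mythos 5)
Your proof is correct and follows essentially the same route as the paper's: both rewrite each pseudoproduct as the symmetric triple product $s|D(R(s)|D(t))\circ R(s)|D(t)\circ R(R(s)|D(t))|t$ using (TC4b)/(TC4b$^\prime$), (TC4a) and Lemma \ref{leftcongcat} together with its dual. The only cosmetic differences are that the paper collapses $s|D(t)$ in one step via (TC4b) where you go through (TC4b$^\prime$) and then absorb the tail, and it dispatches $s\otimes_r t$ by a symmetry appeal where you carry out the dual computation explicitly.
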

\begin{proof}
Now for $s,t\in C$, upon using the dual of Lemma \ref{leftcongcat} and the defining laws, we have that
\bea
s\otimes_l t&=& s|D(t) \circ R(s|D(t))|t\\
&=& (s\circ R(s))|D(t)\circ R(R(s)|D(t))|t\\
&=& s|(D(R(s)|D(t))\circ R(s)|D(t)\circ R(R(s)|D(t))|t,
\eea
whhich by symmetry must also equal $s\otimes_r t$.
\end{proof}

It therefore makes sense to refer only to ``the pseudoproduct" when (TC4) holds, and henceforth we use the notation ``$\otimes$" for this one operation.  Indeed the above result makes evident that we could from the outset instead have defined the pseudoproduct in a category with biaction satisfying (TC4) to equal the entirely symmetric category {\em triple} product
$$s\otimes t=s|(D(R(s)|D(t))\circ R(s)|D(t)\circ R(R(s)|D(t))|t.$$

Note that Example \ref{notfully} is left/right symmetric and hence satisfies (TC4), showing that (TC4) is not sufficient to imply the associativity of the pseudoproduct.

By dualising  Proposition \ref{catle} and then using Proposition \ref{extensible}, we obtain the following.

\begin{cor}  \label{cate}
If $S$ is a cat-semigroup satisfying the match-up conditions, then ${\mc C}(S)$ satisfies (TC4) and $\otimes$ coincides with the product on $S$.
\end{cor}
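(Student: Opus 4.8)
The plan is to assemble the corollary from the one-sided result already established in Proposition \ref{catle} together with Proposition \ref{extensible}, with Proposition \ref{lrequal} handling the final claim about the pseudoproduct. First I would invoke Proposition \ref{catle} directly: since the cat-semigroup $S$ satisfies the left match-up condition, ${\mc C}(S)$ is a category with biaction satisfying (LMU) and (TC4L), and the left pseudoproduct $\otimes_l$ coincides with the semigroup product on $S$.

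Next I would apply the left/right dual of Proposition \ref{catle}. The right match-up condition $D(R(x)y)=R(xD(R(y)x))$ is precisely the mirror image of the left match-up condition $R(xD(y))=D(R(xD(y))y)$, and the conditions (RMU) and (TC4R) were introduced (alongside Definition \ref{leftextensible}) exactly dually to (LMU) and (TC4L); hence every conclusion of Proposition \ref{catle} dualises. Since $S$ also satisfies the right match-up condition, it follows that ${\mc C}(S)$ satisfies (RMU) and (TC4R), with $\otimes_r$ coinciding with the product on $S$. At this point ${\mc C}(S)$ satisfies both (TC4L) and (TC4R), so Proposition \ref{extensible} yields (TC4). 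Finally, with (TC4) in hand, Proposition \ref{lrequal} shows that the left and right pseudoproducts agree and equal the single operation $\otimes$; since each of $\otimes_l$ and $\otimes_r$ already coincides with the semigroup product on $S$, so does $\otimes$.

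There is no genuine obstacle here, as the corollary is a direct consequence of previously established results; the only point requiring any care is checking that the dualisation of Proposition \ref{catle} is legitimate, that is, that the right match-up condition is truly the mirror of the left match-up condition and that (RMU) and (TC4R) are the exact duals of (LMU) and (TC4L) as specified in Definition \ref{leftextensible}. Once that symmetry is granted, the remaining steps are purely citational.
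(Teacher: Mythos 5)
Your proposal is correct and follows exactly the route the paper takes: the paper derives Corollary \ref{cate} "by dualising Proposition \ref{catle} and then using Proposition \ref{extensible}", which is precisely your combination of (TC4L) from the left match-up condition, (TC4R) from its dual, and Proposition \ref{extensible} to conclude (TC4), with Proposition \ref{lrequal} justifying that the single pseudoproduct $\otimes$ agrees with the semigroup product. Your explicit attention to the legitimacy of the dualisation and the role of Proposition \ref{lrequal} only spells out what the paper leaves implicit.
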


From Proposition \ref{extensible} and Corollary \ref{ppcor}, we obtain the following.

\begin{cor} \label{pp1cor}
If $C$ is a category with biaction satisfying (TC4) and $\otimes$ is associative, then ${\mc S}(C)=(C,\otimes,D,R)$ is a cat-semigroup satisfying the match-up conditions.
\end{cor}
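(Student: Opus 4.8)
The plan is to derive the corollary by assembling the one-sided results already in place, so that almost no fresh computation is required. First I would invoke Proposition \ref{extensible}: since $C$ is a category with biaction satisfying (TC4), it automatically satisfies (TC4L), (TC4R), (LMU) and (RMU). This simultaneously secures both the hypotheses needed for Corollary \ref{ppcor} and the dual hypotheses needed for its left/right dual, and it guarantees that both the left pseudoproduct $\otimes_l$ and the right pseudoproduct $\otimes_r$ are defined on $C$.

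Next I would reconcile the various pseudoproducts with the single operation $\otimes$ appearing in the statement. By Proposition \ref{lrequal}, the assumption (TC4) forces $s\otimes_l t = s\otimes_r t$ for all $s,t\in C$, and this common value is exactly what we have denoted $\otimes$. Consequently the algebra ${\mc S}(C)=(C,\otimes,D,R)$ is identical whether we regard its binary operation as $\otimes_l$, $\otimes_r$, or $\otimes$, and the hypothesised associativity of $\otimes$ is at once associativity of $\otimes_l$ and associativity of $\otimes_r$.

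With this in hand the conclusion follows by invoking Corollary \ref{ppcor} on both sides. Since $C$ satisfies (LMU) and (TC4L) and $\otimes_l=\otimes$ is associative, Corollary \ref{ppcor} yields that ${\mc S}(C)$ is a cat-semigroup satisfying the left match-up condition. Dualising the entire argument—using (RMU), (TC4R), and associativity of $\otimes_r=\otimes$ in place of their left-handed counterparts—the dual of Corollary \ref{ppcor} gives that ${\mc S}(C)$ also satisfies the right match-up condition. Thus ${\mc S}(C)$ is a cat-semigroup satisfying both match-up conditions, which is precisely the assertion.

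The only point demanding care is the identification $\otimes=\otimes_l=\otimes_r$ supplied by Proposition \ref{lrequal}: it is what lets the single associativity hypothesis feed both the left- and the right-handed applications of Corollary \ref{ppcor}. Beyond this bookkeeping the statement is a direct consequence of the previously established results, and I anticipate no genuine obstacle.
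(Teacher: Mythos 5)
Your proposal is correct and follows essentially the same route as the paper, which derives the corollary directly from Proposition \ref{extensible} and Corollary \ref{ppcor} (together with its dual), with the identification $\otimes=\otimes_l=\otimes_r$ from Proposition \ref{lrequal} already built into the notation. Making that identification explicit, as you do, is exactly the right bookkeeping and introduces no gap.
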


By Theorem \ref{big} as well as Corollaries \ref{cate} and \ref{pp1cor}, we obtain the following.

\begin{thm}  \label{big1}
The category of cat-semigroups satisfying the match-up conditions is isomorphic to the category of categories with biaction satisfying (TC4) in which $\otimes$ is associative.
\end{thm}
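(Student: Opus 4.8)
Theorem \ref{big1} asserts a category isomorphism, so the plan is to assemble it from the machinery already built up rather than to prove anything from scratch. The proof will mirror the structure of Theorem \ref{big} exactly, with ``left match-up condition'' replaced by ``match-up conditions'', ``(LMU) and (TC4L) and associativity of $\otimes_l$'' replaced by ``(TC4) and associativity of $\otimes$'', and Proposition \ref{catle}/Proposition \ref{pp}/Corollary \ref{ppcor} replaced by their two-sided analogues Corollary \ref{cate} and Corollary \ref{pp1cor}. The two mutually-inverse object-level constructions are $S\mapsto{\mc C}(S)$ and $C\mapsto{\mc S}(C)$, and the morphism-level assignments are the identity on underlying functions.

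The first step is the object correspondence. If $S$ is a cat-semigroup satisfying the match-up conditions, then by Corollary \ref{cate}, ${\mc C}(S)$ satisfies (TC4) and $\otimes$ agrees with the semigroup product, so $\otimes$ is associative and ${\mc C}(S)$ is a category with biaction of the required type; moreover ${\mc S}{\mc C}(S)=S$ since the pseudoproduct recovers the original multiplication. Conversely, if $C$ is a category with biaction satisfying (TC4) with $\otimes$ associative, then by Corollary \ref{pp1cor}, ${\mc S}(C)$ is a cat-semigroup satisfying the match-up conditions. To close the loop I need ${\mc C}{\mc S}(C)=C$: by Proposition \ref{pp} (which applies, as (TC4) implies (TC4L) and (LMU) by Proposition \ref{extensible}), in ${\mc S}(C)$ we have $s\circ t=s\otimes t$ whenever $R(s)=D(t)$, together with $e\otimes x=e|x$ and $x\otimes e=x|e$; these are exactly the restricted product and biaction of ${\mc C}{\mc S}(C)$, so the two enriched categories coincide.

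The second step is functoriality, again verbatim from the proof of Theorem \ref{big}. A $D,R$-respecting semigroup homomorphism $f:S_1\to S_2$ induces a biaction functor ${\mc C}(S_1)\to{\mc C}(S_2)$ on the same underlying map, because the restricted product and both biactions are restrictions of the semigroup product. In the reverse direction, given a biaction functor $F:C_1\to C_2$ between categories of the stated type, I compute directly that $F$ respects $\otimes$: using the definition $s\otimes t=s|D(t)\circ R(s|D(t))|t$, the fact that $F$ commutes with $D$, $R$, $\circ$ and the biaction lets me push $F$ through each factor, yielding $F(s\otimes t)=F(s)\otimes F(t)$. Thus $F$ determines a $D,R$-respecting homomorphism ${\mc S}(C_1)\to{\mc S}(C_2)$, and the two assignments are mutually inverse on morphisms as well as objects.

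I do not expect any serious obstacle, since all the genuine work has already been done: the delicate verifications are the associativity-forcing computation (done in Theorem \ref{bigsp1} and packaged here via the blanket associativity hypothesis), the two-sided match-up extraction (Proposition \ref{TC4}, Corollary \ref{cate}), and the identity ${\mc C}{\mc S}(C)=C$ (Proposition \ref{pp}). The only point requiring a moment's care is confirming that (TC4) with associativity really supplies all the hypotheses of Corollaries \ref{cate} and \ref{pp1cor}; this is immediate from Proposition \ref{extensible}, which shows (TC4) entails (TC4L), (TC4R), (LMU) and (RMU), so both Proposition \ref{catle} (dualised) and Corollary \ref{ppcor} are in force. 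The proof is therefore a short bookkeeping exercise citing Theorem \ref{big} and the two corollaries, and I would write it as such.
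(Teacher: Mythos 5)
Your proposal is correct and takes essentially the same route as the paper, which states Theorem \ref{big1} as an immediate consequence of Theorem \ref{big} together with Corollaries \ref{cate} and \ref{pp1cor}, giving no further proof. Your additional bookkeeping (invoking Proposition \ref{extensible} to see that (TC4) supplies (TC4L), (LMU), etc., and checking ${\mc C}{\mc S}(C)=C$ via Proposition \ref{pp}) is exactly the implicit content of the paper's appeal to Theorem \ref{big}.
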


There is a more elegant way to describe the categories with biaction in the last result, that does not make reference to associativity of $\otimes$, although it is equivalent to two special cases of it in which one of the arguments is in $C^0$.

\begin{pro}  \label{extelegant}
Suppose $C$ is a category with biaction satisfying (TC4).  Then $\otimes$ is associative if and only if $C$ satisfies (TC7), which consists of the following two conditions for all $a,b\in C$ and $e\in C^0$:
\ben [ (TC7a)] 
\item $e|(a|D(b)\circ R(a|D(b))|b)=e|a|D(b)\circ R(e|a|D(b))|b$ and
\item $(a|D(R(a)|b) \circ R(a)|b)|e=a|D(R(a)|b|e)\circ R(a)|b|e$.
\een
\end{pro}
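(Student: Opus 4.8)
The plan is first to translate (TC7) into workable form. Since $C$ satisfies (TC4), Proposition \ref{extensible} gives that it satisfies (TC4L), (TC4R), (LMU) and (RMU), so the pseudoproduct $\otimes$ is well-defined and symmetric (Proposition \ref{lrequal}); moreover, as in the proof of Proposition \ref{pp}, $\otimes$ restricts to $\circ$ on composable pairs and satisfies $e\otimes x=e|x$ and $x\otimes e=x|e$ for $e\in C^0$. Unwinding the definitions and using (TC6), condition (TC7a) reads exactly $e|(a\otimes b)=(e|a)\otimes b$, and (TC7b) reads exactly $(a\otimes b)|e=a\otimes(b|e)$. In the semigroup notation these are the two instances $e\otimes(a\otimes b)=(e\otimes a)\otimes b$ and $(a\otimes b)\otimes e=a\otimes(b\otimes e)$ of associativity in which an outer argument lies in $C^0$, which makes the forward implication immediate: if $\otimes$ is associative, both instances hold a fortiori, so (TC7) holds.

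For the converse I would assume (TC7) and prove $(s\otimes t)\otimes u=s\otimes(t\otimes u)$ by reducing each side to the common form $s|D(v)\circ(w\otimes u)$, where $v:=t|D(u)$ and $q:=R(s|D(v))\in C^0$. For the left-associated product I would expand $(s\otimes t)\otimes u$ by the left form of $\otimes$, apply (TC7b) to rewrite $(s\otimes t)|D(u)=s\otimes v$, then regroup using associativity of the category product $\circ$ and collapse the trailing factor via the identity $p\circ R(p)|u=p\otimes u$, valid whenever $p|D(u)=p$; this yields $s|D(v)\circ((q|v)\otimes u)$. For the right-associated product I would first observe $D(t\otimes u)=D(t|D(u))=D(v)$, so the leading factor and projection are again $s|D(v)$ and $q$, and then apply (TC7a) to rewrite $q|(t\otimes u)=(q|t)\otimes u$, yielding $s|D(v)\circ((q|t)\otimes u)$. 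The two forms coincide once I verify $(q|v)\otimes u=(q|t)\otimes u$, which follows since $q|v=(q|t)|D(u)$ by (TC6) together with the fact that $(r|D(u))\otimes u=r\otimes u$ for every $r\in C$.

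The main obstacle, and the reason the bookkeeping is delicate, is that a bare category with biaction satisfying (TC4) need not satisfy (TC3) or (TC5); thus the ``obvious'' idempotency of the right action, $(x|e)|e=x|e$, and its consequence $v|D(u)=v$, are not available as axioms and cannot be obtained from associativity of the action. Instead they must be supplied by Lemma \ref{TC7}, which applies precisely because (LMU) and (TC4L) hold. Correctly identifying this lemma as the only legitimate source of right-action idempotency at each step, while simultaneously ensuring that both associations genuinely collapse to the same expression of the shape $s|D(v)\circ(\cdot\otimes u)$, is the crux of the argument; the remaining manipulations are routine applications of (TC6), (LMU), and the definition of $\otimes$.
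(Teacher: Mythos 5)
Your proof is correct. The forward direction is exactly the paper's: via Propositions \ref{pp} and \ref{lrequal}, (TC7a) and (TC7b) are precisely the instances $e\otimes(a\otimes b)=(e\otimes a)\otimes b$ and $(a\otimes b)\otimes e=a\otimes(b\otimes e)$ of associativity with an outer projection argument. In the converse your strategy agrees in outline --- expand $(s\otimes t)\otimes u$ using (TC7b) and $s\otimes(t\otimes u)$ using (TC7a), exploiting $\otimes_l=\otimes_r$ --- but the target normal form differs. The paper pushes the left-associated product in one chain all the way to the fully symmetric triple product $a|D(R(a)|b|D(c))\circ R(a)|b|D(c)\circ R(R(a)|b|D(c))|c$, the trailing factor being handled by the category law $R(x\circ y)=R(y)$, and then obtains the right association by the left/right symmetry of that expression; it never needs the idempotency $(x|e)|e=x|e$. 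Your route stops at the two-factor form $s|D(v)\circ(w\otimes u)$ and must then reconcile $w=q|v$ with $w=q|t$, which is where Lemma \ref{TC7} and the auxiliary identity $(r|D(u))\otimes u=r\otimes u$ enter; your uses of these are legitimate, since (LMU) and (TC4L) hold by Proposition \ref{extensible}. Both arguments are valid: the paper's is shorter and bypasses Lemma \ref{TC7} entirely in this proof, while yours makes explicit the role of right-action idempotency (which you rightly flag as the delicate point, given that (TC3) and (TC5) are unavailable) at the cost of somewhat more bookkeeping.
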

\begin{proof}
First note that the category products in (TC7) both exist, because (LMU) and (RMU) hold by Proposition \ref{extensible}.

Suppose $\otimes$ is associative in $C$.  Then in particular, for all $a,b\in C$ and $e\in C^0$, we have that $e\otimes(a\otimes b)=(e\otimes a)\otimes b$, so by Propositions \ref{pp} and \ref{lrequal}, $e|(a\otimes_l b)=(e|a)\otimes_l b$, which yields (TC7a); we argue dually to give (TC7b).

Conversely, let $C$ be a category with biaction satisfying (TC4) and (TC7).  
Then for $a,b,c\in C$ we have
\bea
(a\otimes b)\otimes c&=& (a\otimes_r b)\otimes_l c\mbox{ by Proposition \ref{lrequal}}\\
&=&(a\otimes_r b)|D(c)\circ R((a\otimes_r b)|D(c))|c\\
&=&(a|D(R(a)|b)\circ R(a)|b)|D(c)\circ R((a\otimes b)|D(c))|c\mbox{ by (TC7b)}\\
&=&a|D(R(a)|b|D(c))\circ R(a)|b|D(c)\circ R(R(a)|b|D(c))|c,
\eea
and by symmetry (using (TC7a), this must also equal $a\otimes (b\otimes c)$.  So $\otimes$ is associative.
\end{proof}

\begin{cor}  \label{corTC7}
The category of cat-semigroups satisfying the match-up conditions is isomorphic to the category of categories with biaction satisfying (TC4) and (TC7).
\end{cor}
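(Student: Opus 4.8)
The plan is to deduce Corollary~\ref{corTC7} immediately from Theorem~\ref{big1} together with Proposition~\ref{extelegant}, since between them these two results identify the two classes of categories with biaction in question as literally the same class.

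First I would recall that Theorem~\ref{big1} already establishes an isomorphism between the category of cat-semigroups satisfying the match-up conditions and the category of categories with biaction satisfying (TC4) in which $\otimes$ is associative, with morphisms the biaction functors. The only apparent gap between that result and the present statement is the replacement of the condition ``$\otimes$ is associative'' by the condition (TC7), so it suffices to show that these two conditions cut out the very same category of enriched categories.

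The key step is then to invoke Proposition~\ref{extelegant}: for \emph{any} category with biaction $C$ satisfying (TC4), associativity of $\otimes$ holds if and only if (TC7) holds. Because this equivalence is established object-by-object (it is a statement about each individual such $C$), the object-class of categories with biaction satisfying (TC4) together with associativity of $\otimes$ coincides \emph{exactly} with the object-class of those satisfying (TC4) and (TC7). Since the morphism notion -- biaction functor -- is identical in both descriptions, these two categories of enriched categories are literally the same category. Hence the isomorphism supplied by Theorem~\ref{big1}, implemented by the mutually inverse assignments ${\mc C}$ and ${\mc S}$, is already an isomorphism onto the category described here, and there is nothing further to verify.

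I do not expect a genuine obstacle here, as all of the substantive work has been carried out in Proposition~\ref{extelegant} (the associativity $\Leftrightarrow$ (TC7) equivalence) and Theorem~\ref{big1} (the functorial isomorphism). The single point meriting a moment's care is to note that Proposition~\ref{extelegant} yields an \emph{equality} of object-classes rather than a mere bijection up to isomorphism, so that no re-examination of the functorial part of Theorem~\ref{big1} is needed; the morphisms are untouched because their definition never referenced associativity or (TC7) in the first place.
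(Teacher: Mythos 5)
Your proposal is correct and is exactly the paper's intended argument: the corollary is stated immediately after Proposition~\ref{extelegant} precisely because that proposition shows the object-class ``(TC4) plus associativity of $\otimes$'' from Theorem~\ref{big1} coincides with the object-class ``(TC4) and (TC7)'', while the morphisms (biaction functors) are unchanged. Nothing further is needed.
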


In fact, a slightly strengthened form of (TC7) implies (TC4).

\begin{lem}
Let $C$ be a category with biaction.  Then $C$ satisfies both (TC4) and associativity of $\otimes$ if and only if it satisfies (TC7$^\prime$), given by the following conditions: for all $a,b\in C$ and $e\in C^0$,
\ben [ (TC7a$^\prime$)] 
\item $(a|D(b)\circ R(a|D(b))|b$ exists and $e|(a|D(b)\circ R(a|D(b))|b)=e|a|D(b)\circ R(e|a|D(b))|b$;
\item $a|D(R(a)|b) \circ R(a)|b$ exists and $(a|D(R(a)|b) \circ R(a)|b)|e=a|D(R(a)|b|e)\circ R(a)|b|e$.
\een
\end{lem}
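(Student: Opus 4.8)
The plan is to exploit the fact that (TC7$^\prime$) is nothing but (TC7) together with explicit existence assertions for the two category products appearing in it, and that these existence assertions, read over all $a,b$, are precisely (LMU) and (RMU). Indeed, by axiom (C3) the product $a|D(b)\circ R(a|D(b))|b$ exists if and only if $R(a|D(b))=D(R(a|D(b))|b)$, which quantified over all $a,b$ is exactly (LMU); dually, existence of $a|D(R(a)|b)\circ R(a)|b$ for all $a,b$ is exactly (RMU). This observation immediately settles the forward direction: if $C$ satisfies (TC4) and $\otimes$ is associative, then (LMU) and (RMU) hold by Proposition \ref{extensible}, so both products exist, while the equalities in (TC7a$^\prime$) and (TC7b$^\prime$) are just (TC7a) and (TC7b), which hold by Proposition \ref{extelegant}. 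Hence (TC7$^\prime$) holds.

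For the converse, suppose $C$ satisfies (TC7$^\prime$). The existence assertions yield (LMU) and (RMU) as above, and the equalities yield (TC7). I would first recover (TC4) by specialising to the case $R(a)=D(b)$. When $R(a)=D(b)$ we have $a|D(b)=a|R(a)=a$ by (TC2), whence $R(a|D(b))=R(a)=D(b)$ and $R(a|D(b))|b=D(b)|b=b$, so the product $a|D(b)\circ R(a|D(b))|b$ collapses to $a\circ b$; dually $a|D(R(a)|b)\circ R(a)|b=a\circ b$. Substituting these into (TC7a) and (TC7b) and using (TC6) to simplify $e|a|D(b)=e|(a|D(b))=e|a$ and $R(a)|b|e=(R(a)|b)|e=b|e$, the two equalities become exactly $e|(a\circ b)=(e|a)\circ R(e|a)|b$ and $(a\circ b)|e=a|D(b|e)\circ b|e$, which are the equalities in (TC4a) and (TC4b). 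The existence halves of (TC4a) and (TC4b) follow by applying (LMU) with $s=e|a$ and $t=b$ (noting $(e|a)|D(b)=e|a$ when $R(a)=D(b)$) and, dually, (RMU) with $s=a$ and $t=b|e$. Thus (TC4) holds.

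With (TC4) established, the pseudoproduct $\otimes$ is defined via Proposition \ref{lrequal}, and since (TC7) also holds, Proposition \ref{extelegant} yields that $\otimes$ is associative, completing the converse.

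The one genuinely substantive step is the recognition that the existence clauses of (TC7$^\prime$), read over all $a,b$, coincide with (LMU) and (RMU); once this is seen, everything reduces to the routine specialisation $R(a)=D(b)$ together with the collapsing identities supplied by (TC2) and (TC6). The main bookkeeping obstacle is verifying that, in this specialisation, the triple expressions really do simplify to the two-term products of (TC4) on both sides of each equality, and that the implicit existence of the intermediate products is in each case guaranteed by (LMU) or (RMU) rather than silently assumed.
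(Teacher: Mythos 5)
Your proposal is correct and follows essentially the same route as the paper: the forward direction is delegated to Propositions \ref{extensible} and \ref{extelegant}, and the converse recovers (TC4) by specialising (TC7$^\prime$) to the case $R(a)=D(b)$ (using (TC2) and (TC6) to collapse the products) and then obtains associativity from (TC7) via Proposition \ref{extelegant}. Your explicit identification of the existence clauses with (LMU) and (RMU), and your care over where the existence halves of (TC4a)/(TC4b) come from, merely spell out details the paper leaves implicit.
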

\begin{proof}
Evidently (TC7a$^\prime$) and (TC7b$^\prime$) hold in categories with biaction satisfying (TC4) and associativity of $\otimes$, by Proposition \ref{extensible} (ensuring that (LMU) and (RMU) hold) and Proposition \ref{extelegant}.

Conversely, assume (TC7a$^\prime$) and (TC7b$^\prime$) hold in $C$.  Then assuming that $R(a)=D(b)$ in (TC7a$^\prime$) yields (TC4a) as a consequence, and dually for (TC7b$^\prime$) and (TC4b).  Clearly, (TC7a) and (TC7b) follow easily.
\end{proof}

\begin{cor}
The category of cat-semigroups satisfying the match-up conditions is isomorphic to the category of categories with biaction satisfying (TC$7^\prime$).
\end{cor}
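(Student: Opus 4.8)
The plan is to obtain this corollary immediately by combining Theorem \ref{big1} with the lemma that directly precedes the statement. Theorem \ref{big1} already supplies an isomorphism between the category of cat-semigroups satisfying the match-up conditions and the category of categories with biaction satisfying (TC4) in which $\otimes$ is associative, with biaction functors as the morphisms on the category side. The preceding lemma shows that, for a category with biaction $C$, the conjunction of (TC4) and associativity of $\otimes$ is \emph{equivalent} to the single condition (TC7$^\prime$). So my approach is simply to substitute this equivalent axiomatisation into the description of the category appearing in Theorem \ref{big1}.

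Concretely, first I would observe that the two candidate categories of enriched categories have exactly the same objects: by the preceding lemma, a category with biaction satisfies (TC7$^\prime$) if and only if it satisfies (TC4) together with associativity of $\otimes$, so the underlying classes of objects coincide. Next I would note that the morphisms in both cases are biaction functors (Definition \ref{funcdef}), which are specified independently of whether the axioms are phrased via (TC4) and associativity or via (TC7$^\prime$); hence the two categories are literally identical, not merely isomorphic. Finally, the isomorphism of Theorem \ref{big1} between cat-semigroups satisfying the match-up conditions and categories with biaction satisfying (TC4) with $\otimes$ associative therefore is the desired isomorphism onto the category of categories with biaction satisfying (TC7$^\prime$).

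I do not expect any genuine obstacle here: all the substantive work has already been carried out, the associativity analysis in Proposition \ref{extelegant} and the reduction of (TC4)-plus-associativity to the standalone axiom (TC7$^\prime$) in the preceding lemma. The only point worth a sentence of care is the purely bookkeeping remark that replacing one defining set of laws by an equivalent one leaves both the object class and the (separately defined) morphism class of the enriched-category side unchanged, so that the isomorphism of Theorem \ref{big1} transports verbatim. Thus the corollary is a direct consequence, and the proof amounts to citing Theorem \ref{big1} and the lemma above.
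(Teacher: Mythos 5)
Your proposal is correct and is exactly the argument the paper intends: the corollary follows immediately from Theorem \ref{big1} together with the preceding lemma's equivalence of (TC4)-plus-associativity with (TC7$^\prime$), and your observation that the object and morphism classes coincide verbatim is the only bookkeeping required.
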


Theorem \ref{big1} and its two corollaries can be specialised further as desired.  For example, to give a description of the categories with biaction arising from DRC-semigroups in the sense of \cite{jonesDRC} and \cite{Swang}, one need only add analogs of the reduced property of DRC-semigroups to the category with biaction axioms.  

Indeed, it is of interest to contrast our approach to obtaining an ESN theorem to the one used by Wang in \cite{Swang} for DRC-semigroups.  There, the author took the more traditional order-theoretic approach, in which notions of restriction and co-restriction are defined order-theoretically in a generalised category equipped with a suitably well-behaved partial order consistent with an assumed projection algebra structure on the identities.  In this approach, one only defines the ``restriction" $e|s$ ($e\in C^0, s\in C$) when $e\leq D(s)$ (under the given order), and then $e|s$ is defined to be the unique $t\leq s$ such that $D(t)=e$; dually for the ``co-restriction" $s|e$.  (However, some additional purely algebraic laws involving restriction and co-restriction are needed.)  Hence, fewer products of projections with arbitrary semigroup elements are retained in the partial algebra so determined, compared to the current approach in which the ``biaction" is defined for arbitrary $e,s$.

However, in the approach of \cite{Swang}, when obtaining the generalised category corresponding to a given DRC-semigroup, the partial product $s\cdot t$ is defined to exist (and be $st$) if and only if $R(s)=D(R(s)D(t))$ and $D(t) = R((R(s)D(t))$, for which it is sufficient but certainly not necessary that $R(s)=D(t)$.  Hence, a greater number of general products is retained in this generalised category than in the associated category with biaction.  Moreover, the approach in \cite{Swang} requires the initial specification of both a partial order on the entire generalised category, and a projection algebra structure on its identities; our approach requires neither.  Indeed, in our setting such order structure does not even seem definable in general.

\subsection{Categories and the strong match-up conditions}

We conclude this section by considering the category with biaction analog of the strong match-up condition.

\begin{dfn}  \label{strongextensible}
We define (SMU) to consist of the following two conditions on the category with biaction $C$: 
\ben [(SMU1)]
\item $D(R(s)|t)=R(s|D(t))$ for all $s,t\in C$, and
\item $(e|f)\circ (e|f)=e|f$ for all $e,f\in C^0$.
\een
\end{dfn}

Note that the product in (SMU2) exists, if we assume (SMU1): $R(e|f)=R(e|D(f))=D(R(e)|f)=D(e|f)$ for all $e,f\in C$.  

In this case, the pseudoproduct may be expressed in the form used in \cite{fitzkin}.

\begin{pro}  \label{strongood}
Suppose a category with biaction $C$ satisfies (TC4) and (SMU).  Then 
$$s\otimes t=s|D(t)\circ R(s)|t\mbox{ for all }s,t\in C.$$ 
\end{pro}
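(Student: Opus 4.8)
The plan is to rewrite the claimed product into the \emph{right} pseudoproduct form supplied by Proposition \ref{lrequal}. Since (TC4) holds, Proposition \ref{extensible} gives (TC4L), (TC4R), (LMU) and (RMU), so both pseudoproducts are defined and coincide with $\otimes$; in particular $s\otimes t = s|D(R(s)|t)\circ R(s)|t$. By (SMU1), $D(R(s)|t)=R(s|D(t))$, and I abbreviate this common projection by $e$. Thus the target $s\otimes t$ equals $s|e\circ R(s)|t$, while the claimed product $s|D(t)\circ R(s)|t$ is legitimate precisely because its two factors compose exactly when $R(s|D(t))=D(R(s)|t)$, which is (SMU1). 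So it suffices to prove $s|D(t)\circ R(s)|t = s|e\circ R(s)|t$.

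Write $p=R(s)|D(t)$. I would first record its basic properties: applying (SMU1) with $t$ replaced by $D(t)$ gives $D(p)=R(s|D(t))=e$, the dual of Lemma \ref{leftcongcat} gives $R(p)=R(s|D(t))=e$, and (SMU2) gives $p\circ p=p$. Next I establish two rewrites. The easier one, $R(s)|t=p\circ(e|t)$, comes from applying (TC4a) to $R(s)|(D(t)\circ t)=R(s)|t$, whose right-hand factor is $R(R(s)|D(t))|t=R(p)|t=e|t$. The more delicate one is $s|D(t)=s|e\circ p$: applying (TC4b$^\prime$) to $(s\circ R(s))|D(t)$ (with $b|e=R(s)|D(t)=p$) gives $s|D(p)\circ R(s|D(p))|p$, which, since $D(p)=e$, is $s|e\circ R(s|e)|p$. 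Here (RMU) together with (SMU1) forces $R(s|e)=D(R(s)|t)=e$, so the trailing factor is $e|p=D(p)|p=p$ by (TC2).

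Finally I substitute both rewrites and compute, using associativity of $\circ$ and then $p\circ p=p$ from (SMU2), that $s|D(t)\circ R(s)|t=(s|e\circ p)\circ(p\circ(e|t))=s|e\circ(p\circ p)\circ(e|t)=s|e\circ p\circ(e|t)=s|e\circ(R(s)|t)$, and the last expression is exactly the right pseudoproduct $s\otimes t$. The main obstacle is the rewrite $s|D(t)=s|e\circ p$: everything hinges on pinning down $R(s|e)=e$ so that (TC2) collapses the trailing biaction term of the (TC4b$^\prime$) expansion to $p$, after which the idempotency of $p$ furnished by (SMU2) does the real work of eliminating the superfluous copy of $p$.
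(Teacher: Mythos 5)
Your proof is correct and follows essentially the same route as the paper's: expand both factors of $s|D(t)\circ R(s)|t$ via (TC4), use (SMU2) to collapse the duplicated middle term $p=R(s)|D(t)$, and identify the result with $s\otimes t$ via Proposition \ref{lrequal}. The only (harmless) detour is that you invoke (TC4b$^\prime$) for the rewrite $s|D(t)=s|e\circ p$, which forces you to verify $R(s|e)=e$ via (RMU); the paper gets the same rewrite in one step directly from (TC4b).
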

\begin{proof}
For all $x,y\in C$, 
$x|D(y)\circ R(x)|y$ exists by (SMU1), and  
\bea
x|D(y)\circ R(x)|y
&=& (x\circ R(x))|D(y)\circ R(x)|(D(y)\circ y)\\
&=& x|D(R(x)|D(y))\circ R(x)|D(y)\circ R(x)|D(y)\circ R(R(x)|D(y))|y\mbox{ by (TC4)}\\
&=& x|D(R(x)|D(y))\circ R(x)|D(y)\circ R(R(x)|D(y))|y\mbox{ by (SMU2)}\\
&=&x\otimes y\mbox{ by Proposition \ref{lrequal},}
\eea
as claimed.
\end{proof}

Observe that Example \ref{notfully} satisfies (SMU), showing that (TC4) and (SMU) are together not sufficient for the associativity of $\otimes$.

\begin{pro}  \label{catse}
If $S$ is a cat-semigroup satisfying the strong match-up conditions, then ${\mc C}(S)$ satisfies (TC4) and (SMU), and $\otimes$ coincides with the product on $S$.
\end{pro}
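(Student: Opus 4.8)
The plan is to obtain (TC4) together with the identification of $\otimes$ with the semigroup product directly from results already in hand, and then to verify (SMU1) and (SMU2) by reading the two strong match-up laws through the definition of the biaction $e|s=es$, $s|e=se$.

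First I would note that the strong match-up conditions contain the law $R(sD(t))=D(R(s)t)$ as their first ingredient. By Proposition~\ref{match0} this forces $S$ to satisfy the (ordinary) match-up conditions, whereupon Corollary~\ref{cate} immediately yields that ${\mc C}(S)$ satisfies (TC4) and that $\otimes$ agrees with the product on $S$. This settles the (TC4)-part of the statement with no further computation, and in particular means that only (SMU) remains to be checked.

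For (SMU1) I would simply unwind the biaction notation: since $R(s)|t=R(s)t$ and $s|D(t)=sD(t)$, the equation $D(R(s)|t)=R(s|D(t))$ is nothing but the first strong match-up law $D(R(s)t)=R(sD(t))$, so it holds verbatim. Once (SMU1) is available, the remark following Definition~\ref{strongextensible} guarantees that the category product appearing in (SMU2) is in fact defined, since it gives $R(e|f)=D(e|f)$ for all $e,f\in C^0$.

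The one genuinely new point is (SMU2), and I expect this to be the main (indeed only) obstacle. Here I would apply the second strong match-up law $xy=xD(y)R(x)y$ to a pair of projections $e,f\in C^0=D(S)$; using $D(f)=f$ and $R(e)=e$ this gives $ef=eD(f)R(e)f=efef=(ef)^2$, so every product of two projections is idempotent. Consequently $(e|f)\circ(e|f)=(ef)(ef)=ef=e|f$, which is precisely (SMU2). The crux is thus recognising that the second strong match-up law, specialised to projections, is exactly what makes products of projections idempotent; everything else is either a citation of an earlier result or a direct translation of a defining law into biaction notation.
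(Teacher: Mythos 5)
Your proposal is correct and follows essentially the same route as the paper: Proposition~\ref{match0} plus Corollary~\ref{cate} for (TC4) and the agreement of $\otimes$ with the product, with (SMU) then read off from the strong match-up laws via the biaction definition. The paper merely asserts that (SMU) ``follows easily''; your explicit verification --- (SMU1) as a direct transcription of $R(sD(t))=D(R(s)t)$, and (SMU2) from specialising $xy=xD(y)R(x)y$ to projections to get $(ef)^2=ef$ --- is exactly the intended argument.
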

\begin{proof}
By Proposition \ref{match0}, $S$ satisfies the match-up conditions; hence ${\mc C}(S)$ satisfies (TC4) by Corollary \ref{cate}, and then (SMU) follows easily from the strong match-up conditions for $S$.
\end{proof}

If $C$ is a category with biaction satisfying (TC4) and (SMU), it follows from Propositions \ref{match} and \ref{catse} that the terms $R(x)|y$ and $D(R(x)|y)|y$ occurring in the pseudoproduct and left pseudoproduct are not necessarily equal, even though the two pseudoproducts are equal; these terms are equal when $C$ is a transcription category.

\begin{pro} \label{pp2cor} 
If $C$ is a category with biaction satisfying (TC4), (TC7) and (SMU), then ${\mc S}(C)=(C,\otimes,D,R)$ is a cat-semigroup satisfying the strong match-up conditions.
\end{pro}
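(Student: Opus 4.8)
The plan is to lean on the machinery already assembled for the match-up conditions and then dispatch the two extra requirements of the strong match-up conditions one at a time. Since $C$ satisfies (TC4) and (TC7), Proposition \ref{extelegant} tells us that $\otimes$ is associative, whereupon Corollary \ref{pp1cor} already gives that ${\mc S}(C)=(C,\otimes,D,R)$ is a cat-semigroup satisfying the match-up conditions. So the only genuinely new work is to verify the two defining laws of the strong match-up conditions in ${\mc S}(C)$, namely $R(x\otimes D(y))=D(R(x)\otimes y)$ and $x\otimes y=x\otimes D(y)\otimes R(x)\otimes y$ for all $x,y\in C$. Throughout I would use the identities from Proposition \ref{pp}: that $e\otimes u=e|u$ and $u\otimes e=u|e$ for all $u\in C$ and $e\in C^0$, and that $u\otimes v=u\circ v$ whenever $R(u)=D(v)$. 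Since $D(y),R(x)\in C^0$, these give $x\otimes D(y)=x|D(y)$ and $R(x)\otimes y=R(x)|y$.

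The first strong match-up law is then immediate: rewriting both sides using the identities above turns $R(x\otimes D(y))=D(R(x)\otimes y)$ into $R(x|D(y))=D(R(x)|y)$, which is precisely (SMU1). For the second law, I would first invoke Proposition \ref{strongood} (applicable because (TC4) and (SMU) hold) to write $x\otimes y=x|D(y)\circ R(x)|y$. Setting $a=x|D(y)$ and $b=R(x)|y$, associativity of $\otimes$ lets us regroup the right-hand side of the target law as $x\otimes D(y)\otimes R(x)\otimes y=(x\otimes D(y))\otimes(R(x)\otimes y)=a\otimes b$. The crux is the observation that $R(a)=R(x|D(y))=D(R(x)|y)=D(b)$ by (SMU1); hence the category product $a\circ b$ exists, so $a\otimes b=a\circ b$ by the relevant clause of Proposition \ref{pp}. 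Since Proposition \ref{strongood} also gives $x\otimes y=a\circ b$ directly, we conclude $x\otimes y=a\otimes b=x\otimes D(y)\otimes R(x)\otimes y$, which is the second strong match-up law.

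The step I expect to carry the most weight is this last one, and in particular the recognition that (SMU1) forces $R(a)=D(b)$, collapsing the four-fold pseudoproduct back to an honest category product $a\circ b$ that coincides with the expression Proposition \ref{strongood} supplies for $x\otimes y$. Everything else is bookkeeping with the translation dictionary of Proposition \ref{pp}; the only care needed is to confirm that each pseudoproduct actually reduces as claimed, which is exactly what (SMU1) guarantees. I would also note in passing that (SMU2) is used implicitly through Proposition \ref{strongood} rather than invoked separately here.
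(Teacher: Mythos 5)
Your proposal is correct and follows essentially the same route as the paper: reduce to the match-up case via Proposition \ref{extelegant} and Corollary \ref{pp1cor}, read the first strong match-up law directly off (SMU1), and use (SMU1) to show that the two middle terms of the four-fold pseudoproduct are composable, collapsing it to $x|D(y)\circ R(x)|y=x\otimes y$ by Proposition \ref{strongood}. The only cosmetic difference is that the paper expands $(x|D(y))\otimes(R(x)|y)$ as a pseudoproduct and simplifies using (SMU1) and (TC2), whereas you invoke the clause of Proposition \ref{pp} stating that $\otimes$ agrees with $\circ$ on composable pairs; the two are interchangeable.
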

\begin{proof}
From Corollary \ref{pp1cor} as well as Propositions \ref{extelegant} and \ref{strongood}, $(C,\otimes,D,R)$ satisfies everything that is claimed except perhaps the strong match-up conditions, although it does satisfy the match-up conditions.  For $s,t\in C$, 
$D(R(s)\otimes t)=D(R(s)|t)=R(s|D(t))=R(s\otimes D(t))$, and
\bea
(s\otimes D(t))\otimes (R(s)\otimes t) &=& (s|D(t))\otimes (R(s)|t)\\
&=&(s|D(t))|D(R(s)|t)\circ R(s|D(t))|(R(s)|t)\\
&=&(s|D(t))|R(s|D(t))\circ D(R(s)|t)|(R(s)|t)\mbox{ by (SMU1)}\\
&=&s|D(t)\circ R(s)|t\\
&=&s\otimes t\mbox{ by Proposition \ref{strongood},}
\eea 
so the strong match-up conditions hold in ${\mc S}(C)$.
\end{proof}

By Corollary \ref{corTC7} and Propositions \ref{strongood}, \ref{catse} and \ref{pp2cor}, we obtain the following.

\begin{thm}  \label{big2}
The category of cat-semigroups satisfying the strong match-up conditions is isomorphic to the category of categories with biaction satisfying (TC4), (TC7) and (SMU).
\end{thm}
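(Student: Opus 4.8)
The plan is to obtain the isomorphism by restricting the category isomorphism of Corollary \ref{corTC7} to appropriate full subcategories, so that almost all of the work has already been done in the cited results. Recall that Corollary \ref{corTC7} supplies mutually inverse functors ${\mc C}$ and ${\mc S}$ between the category of cat-semigroups satisfying the match-up conditions and the category of categories with biaction satisfying (TC4) and (TC7), with morphisms being $D$- and $R$-respecting homomorphisms on the semigroup side and biaction functors on the category side. Since the strong match-up conditions imply the match-up conditions (Proposition \ref{match0}), the cat-semigroups satisfying the strong match-up conditions form a full subcategory of the domain of that isomorphism; likewise the categories with biaction satisfying (TC4), (TC7) and (SMU) form a full subcategory of its codomain. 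The entire proof then reduces to checking that ${\mc C}$ and ${\mc S}$ carry these two full subcategories onto one another.

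For the forward direction I would verify that ${\mc C}$ maps the subcategory of strong-match-up cat-semigroups into the subcategory cut out by (TC4), (TC7) and (SMU). If $S$ satisfies the strong match-up conditions, then by Proposition \ref{match0} it satisfies the match-up conditions, so ${\mc C}(S)$ already satisfies (TC4) and (TC7) by Corollary \ref{corTC7}; the additional condition (SMU) is exactly the content of Proposition \ref{catse}, which simultaneously records that $\otimes$ agrees with the semigroup product of $S$. For the reverse direction I would invoke Proposition \ref{pp2cor} directly: if $C$ satisfies (TC4), (TC7) and (SMU), then ${\mc S}(C)=(C,\otimes,D,R)$ is a cat-semigroup satisfying the strong match-up conditions, so ${\mc S}$ sends the codomain subcategory into the domain subcategory.

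Once both functors are seen to respect these two subcategories, the conclusion follows formally. The functors ${\mc C}$ and ${\mc S}$ are already mutually inverse on the larger categories, so in particular ${\mc S}{\mc C}(S)=S$ for every strong-match-up cat-semigroup $S$ and ${\mc C}{\mc S}(C)=C$ for every $C$ satisfying (TC4), (TC7) and (SMU); their restrictions to the full subcategories are therefore still mutually inverse and functorial with respect to the same morphisms. Because both subcategories are full, no new morphism conditions arise, and the restricted pair exhibits the desired isomorphism.

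There is no genuine obstacle here beyond correctly matching the two subcategories, since the substantive calculations are already packaged in the earlier propositions. The one point that must be handled with care is the compatibility of the two descriptions of the product: in ${\mc S}(C)$ the operation is the symmetric pseudoproduct of Proposition \ref{lrequal}, whereas the strong match-up conditions are naturally phrased via the ``transcription-style'' product $s\otimes t=s|D(t)\circ R(s)|t$. Proposition \ref{strongood} bridges exactly this gap by showing these two expressions coincide under (TC4) and (SMU), which is why it is needed alongside Propositions \ref{catse} and \ref{pp2cor} in the assembly; with that identification in hand, the restriction argument goes through without further difficulty.
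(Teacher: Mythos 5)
Your proposal is correct and follows essentially the same route as the paper, which likewise obtains the theorem by combining Corollary \ref{corTC7} with Propositions \ref{strongood}, \ref{catse} and \ref{pp2cor}, restricting the existing isomorphism to the subcategories cut out by the strong match-up conditions and (SMU). Your additional remark on why Proposition \ref{strongood} is needed to reconcile the two forms of the pseudoproduct is an accurate reading of the paper's intent.
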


From the proof of Proposition \ref{extelegant} we note that (TC7) can be written in slightly simpler form here since $s\otimes t=s|D(t)\circ R(s)|t$ in this case.

\vspace{2cm}

\noindent Tim Stokes\\
Department of Mathematics\\
University of Waikato\\
Hamilton 3216\\
New Zealand.\\
email: tim.stokes@waikato.ac.nz


\begin{thebibliography}{99}
\bibitem{Armstrong} S. Armstrong, `Structure of Concordant Semigroups', {\em J. Algebra} 118 (1988), 205--260.
\bibitem{DAD} S .K. De Carufel and J. Desharnais, `Demonic Algebra with Domain', in Relations and Kleene Algebra in Computer Science, {\em Lecture Notes in Computer Science} Volume 4136, pp. 120--134 (2006)
\bibitem{fitzkin} D. G. FitzGerald, M. K. Kinyon, Trace- and pseudo-products: restriction-like semigroups with a band of projections,  {\em Semigroup Forum} 103, 848--866 (2021).
\bibitem{foun} J. Fountain, `Free right type A semigroups', {\em Glasgow Math. Journal} 33, 135--148 (1991).
\bibitem{constell} V. Gould and C. Hollings, `Restriction semigroups and inductive constellations', {\em Comm. Algebra} 38, 261--287 (2009).
\bibitem{gw} V. Gould and Y. H. Wang, `Beyond orthodox semigroups' {\em J. Algebra} 368 (2012), 209--230.
\bibitem{hms} R. Hirsch, S. Mikul\'as and T. Stokes, `The algebra of non-deterministic programs: demonic operators, orders and axioms', published online by {\em Logic Journal of the IGPL} (2021)
\bibitem{jones} P. R. Jones, `A common framework for restriction semigroups and regular $*$-semigroups', {\em J. Pure Appl. Algebra} 216 (2012), 618--632.
\bibitem{jonesDRC} P. R. Jones, `Generalized Munn Representations of DRC-Semigroups', {\em Southeast Asian Bull. Math.} 45 (2021), 591–619.
\bibitem{law1} M. V. Lawson, `Semigroups and ordered categories I: the reduced case', {\em J. Algebra} 141 (1991), 422--462.
\bibitem{law2}  M. V. Lawson, `On Ehresmann semigroups', {\em Semigroup Forum} 103 (2021), 953--965.
\bibitem{P9M4} W. McCune, `Prover9 and Mace4', http://www.cs.unm.edu/$\sim$mccune/Prover9, 2005--2010.
\bibitem{Nambooripad} K. S. S. Nambooripad, `Structure of regular semigroups II: The general case', {\em Semigroup Forum} 9 (1975), 364--371.
\bibitem{DRsemi} T. Stokes, `Domain and range operations in semigroups and rings', {\em Comm. Algebra} 43 (2015), 3979--4007.
\bibitem{ordehrs} T. Stokes, `Ordered Ehresmann semigroups and categories', {\em Comm. Algebra} 50 (2022), 4805--4821. 
\bibitem{wang1} Y. H. Wang, `Weakly B-orthodox semigroups', {\em Period. Math. Hung.} 68 (2014), 13--38.
\bibitem{swang1} S. F. Wang, `An Ehresmann–Schein–Nambooripad-type theorem for a class of P-restriction semigroups' {\em Bull. Malays. Math. Sci. Soc.} 42 (2019), 535--568.
\bibitem{swang2} S. F. Wang, `An Ehresmann–Schein–Nambooripad theorem for locally Ehresmann P-Ehresmann semigroups' {\em Period. Math. Hung.} 80 (2020), 108--137.
\bibitem{Swang} S. F. Wang, `An Ehresmann-Schein-Nambooripad type theorem for DRC-semigroups', {\em Semigroup Forum} 104 (2022), 731--757.
\bibitem{wang23} S. F. Wang, `On d-semigroups, r-semigroups, dr-semigroups and their
subclasses', {\em Semigroup Forum} 106 (2023), 230--270.
\end{thebibliography}
\end{document}